\g@addto@macro\normalsize{%
  \setlength\abovedisplayskip{7pt}
  \setlength\belowdisplayskip{7pt}
  \setlength\abovedisplayshortskip{7pt}
  \setlength\belowdisplayshortskip{7pt}
}
\setlist{nolistsep} 	
\definecolor{Color1}{rgb}{0.0, 0.42, 0.47}
\definecolor{Color2}{rgb}{0.78, 0.11, 0.0}
\titlespacing*{\section}{0pt}{3.5ex plus 0ex minus 0ex}{1.5ex plus 0ex}
\titlespacing*{\subsection}{0pt}{3.5ex plus 0ex minus 0ex}{1.5ex plus 0ex}
\titlespacing*{\subsubsection}{0pt}{3.5ex plus 0ex minus 0ex}{1.5ex plus 0ex}
\newtheoremstyle{plain}{3mm}{3mm}{\slshape}{}{\bfseries}{.}{.5em}{}
\newtheoremstyle{definition}{2mm}{2mm}{}{}{\bfseries}{.}{.5em}{}
\theoremstyle{plain}
\newtheorem{Theorem}{Theorem}
\newtheorem{Lemma}[Theorem]{Lemma}
\newtheorem{Proposition}[Theorem]{Proposition}
\newtheorem{Corollary}[Theorem]{Corollary}
\newtheorem{Conjecture}{Conjecture}
\newtheorem{Question}{Question}
\theoremstyle{definition}
\newtheorem{Definition}[Theorem]{Definition}
\newtheorem{Remark}[Theorem]{Remark}
\newtheorem{Example}[Theorem]{Example}
\theoremstyle{plain} 
\newcounter{MainTheoremCounter}
\newtheorem{Maintheorem}[MainTheoremCounter]{Theorem}
\theoremstyle{plain}
\newtheorem*{namedthm}{\namedthmname}
\newcounter{namedthm}
	\newenvironment{named}[2]
	{\def\namedthmname{#1}
	\refstepcounter{namedthm}
	\namedthm[#2]\def\@currentlabel{#1}}
	{\endnamedthm}
\numberwithin{equation}{section}
\newcommand{\Cesaro}{Ces\`{a}ro}
\newcommand{\Erdos}{Erd\H{o}s}
\newcommand{\Folner}{F\o{}lner}
\newcommand{\Katai}{K\'{a}tai}
\newcommand{\Mobius}{M\"{o}bius}
\newcommand{\Halasz}{Hal\'{a}sz}
\newcommand{\Turan}{Tur{\'a}n}
\newcommand{\Oh}{{\rm O}}
\newcommand{\oh}{{\rm o}}
\newcommand{\N}{\mathbb{N}}
\newcommand{\Z}{\mathbb{Z}}
\newcommand{\R}{\mathbb{R}}
\newcommand{\C}{\mathbb{C}}
\newcommand{\Q}{\mathbb{Q}}
\newcommand{\T}{\mathbb{T}}
\newcommand{\Cont}{\mathsf{C}}
\newcommand{\define}[1]{{\itshape #1}}
\renewcommand{\epsilon}{\varepsilon}
\renewcommand{\leq}{\leqslant}
\renewcommand{\geq}{\geqslant}
\renewcommand{\setminus}{\backslash}
\renewcommand{\P}{\mathbb{P}}
\newcommand{\E}{\mathbb{E}}
\newcommand{\1}{1}
\renewcommand{\d}{~\mathsf{d}}
\newcommand{\mob}{\boldsymbol{\mu}}
\newcommand{\lio}{\boldsymbol{\lambda}}
\newcommand{\tot}{\boldsymbol{\varphi}}
\newcommand{\BEu}[1]{\underset{#1}{\mathlarger{\mathlarger{\mathbb{E}}}^{~}}\,}
\newcommand{\BEul}[1]{\underset{#1}{\mathlarger{\mathlarger{\mathbb{E}}}^\log}\,}
\newcommand{\POH}{{\mkern 0mu\times\mkern-.3mu}}
\begin{document}

\title{\bfseries Dynamical generalizations of the Prime Number Theorem and disjointness of additive and multiplicative semigroup actions}
\author{Vitaly Bergelson \and Florian K.\ Richter}

\date{\small \today}
\maketitle
\begin{abstract}
We begin by establishing two ergodic theorems which
have among their corollaries numerous classical results from multiplicative number theory, including the Prime Number Theorem, a theorem of Pillai-Selberg, a theorem of \Erdos{}-Delange, the mean value theorem of Wirsing, and special cases of the mean value theorem of \Halasz{}.
Then, by building on the ideas behind our ergodic results, we recast Sarnak's \Mobius{} disjointness conjecture in a new dynamical framework.
This naturally leads to an extension of Sarnak's conjecture that focuses on the disjointness of actions of $(\mathbb{N},+)$ and $(\mathbb{N},\cdot)$.
We substantiate this extension by providing proofs of several special cases.
\end{abstract}

\small
\tableofcontents
\thispagestyle{empty}
\normalsize


\section{Introduction}

One of the fundamental challenges in number theory is to understand the intricate way in which the additive and multiplicative structures of natural numbers intertwine. It is the purpose of this paper to offer a new dynamical perspective on this topic.

This introduction is divided into two subsections. In Subsection \ref{sec_DG_PNT} we present two new ergodic theorems, Theorems~\ref{thm_dynamical_MVT_Omega} and~\ref{thm_dynamical_MVT_fg_sue}, which can be viewed as dynamical amplifications of various classical number-theoretic results including the Prime Number Theorem.
In Subsection~\ref{sec_beyond_sarnak} we take a closer look at the independence of additive and multiplicative structures in $\N\coloneqq\{1,2,3,\ldots\}$.
In particular, we interpret some classical theorems in multiplicative number theory as manifestations of additive-multiplicative independence. This leads to a formulation of 
an extended form of Sarnak's \Mobius{} disjointness conjecture, which is supported by  Theorems~\ref{thm_ortho_fg_sue_nilsequences} and~\ref{thm_ortho_fg_sue_horocycle} in that subsection.

\subsection{Dynamical generalizations of the Prime Number Theorem}
\label{sec_DG_PNT}

Let $\Omega(n)$ denote the number of prime factors of a natural number $n\in\N$ counted with multiplicities.
One of the central themes in multiplicative number theory is the study of the asymptotic distribution of the values of $\Omega(n)$. 
It has a long and rich history and is closely related to fundamental questions about the prime numbers.

{For example, the Prime Number Theorem is equivalent to the assertion that, asymptotically, there are as many $n\in\N$ for which $\Omega(n)$ is even as there are for which $\Omega(n)$ is odd. 
This fact dates back to von Mangoldt \cite[p.\ 852]{vonMangoldt97} and Landau \cite[pp.\ 571--572,\, 620-621]{Landau09b} and can also be expressed using the classical \define{Liouville function} $\lio(n)\coloneqq(-1)^{\Omega(n)}$ as}
\begin{equation}
\label{eqn_PNT_lio}
\lim_{N\to\infty}~\frac{1}{N}\sum_{n=1}^N \lio(n)\,=\,0.
\end{equation}

Given that the values of $\Omega(n)$ equally distribute over evens and odds, it is natural to ask whether the same is true over other residue classes.
This question is answered by the Pillai-Selberg Theorem \cite{Pillai40, Selberg39}, a classical extension of the Prime Number Theorem asserting that for all $m\in\N$ and all $r\in\{0,\ldots,m-1\}$ the set $\{n\in\N: \Omega(n)\equiv {r}\bmod{m}\}$ 
has asymptotic density equal to $1/m$.

Another classical 
result in this direction states that for any irrational $\alpha$ the sequence $\Omega(n)\alpha$, $n\in\N$, is uniformly distributed mod~${1}$. This was first mentioned by \Erdos{} in \cite[p.\ 2, lines 4--5]{Erdos46} without a proof, although \Erdos{} adds that ``the proof is not easy''.
A proof was later published by Delange in \cite{Delange58} (see also \cite[Section 2.4]{Wirsing61} and \cite{Elliott71}). 
The \Erdos{}-Delange Theorem complements the above mentioned result of Pillai and Selberg, as it implies that the values of $\Omega(n)$ are evenly distributed among ``generalized arithmetic progressions'', i.e., for all $\alpha\in\R\setminus\Q$ with $\alpha\geq 1$ and $\beta\in\R$ the asymptotic density of the set of $n$ for which $\Omega(n)$ belongs to $\{\lfloor \alpha m+\beta \rfloor: m\in\N\}$ equals $1/\alpha$.

We will presently formulate our first result, which can be viewed as an ergodic theorem along the sequence $\Omega(n)$. It contains the Prime Number Theorem, the Pillai-Selberg Theorem, and the \Erdos{}-Delange Theorem as rather special cases.
Let $X$ be a compact metric space and $T\colon X \to X$ a continuous map. Since
\begin{equation}
\label{eqn_additive_action}
T^m\circ T^n\,=\, T^{m+n},\qquad \forall m,n\in\N,
\end{equation}
the transformation $T$ naturally induces an action of $(\N,+)$ on $X$. We call the pair $(X,T)$ an \define{additive topological dynamical system}.
A Borel probability measure $\mu$ on $X$ is called \define{$T$-invariant} if $\mu(T^{-1}A)=\mu(A)$ for all Borel measurable subsets $A\subset X$. By the Bogolyubov-Krylov theorem (see for example \cite[Corollary 6.9.1]{Walters82}), every additive topological dynamical system $(X,T)$ possesses at least one $T$-invariant Borel probability measure.
If $(X,T)$ admits only one such measure then the system is called \define{uniquely ergodic}.
For convenience, we will use $(X,\mu,T)$ to denote a uniquely ergodic additive topological dynamical system $(X,T)$ together with its unique invariant probability measure $\mu$.

\begin{Maintheorem}
\label{thm_dynamical_MVT_Omega}
Let $(X,\mu,T)$ be uniquely ergodic.
Then
\begin{equation}
\label{eqn_ud_of_Omega_1}
\lim_{N\to\infty}\frac{1}{N}\sum_{n=1}^N f\big(T^{\Omega(n)}x\big)
~=~\int f\d\mu
\end{equation}
for every every $x\in X$ and $f\in\Cont(X)$.
\end{Maintheorem}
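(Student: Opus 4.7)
\emph{Setup.} The map $n\mapsto S_n:=T^{\Omega(n)}$ defines an action of the multiplicative semigroup $(\N,\cdot)$ on $X$ by continuous maps, since $\Omega$ is completely additive: $S_{mn}=S_m S_n$. The theorem thus asserts a kind of ``multiplicative uniform ergodic theorem'' for the $(\N,\cdot)$-action $S$ on the unique $T$-invariant measure $\mu$. The algebraic bridge between the two structures is that $\Omega(p)=1$ for every prime $p$, so $S_p=T$: a multiplicative shift by any prime coincides with the additive generator $T$.

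\emph{Logarithmic version.} I would first prove the theorem with logarithmic in place of Cesaro averages,
\[
\lim_{N\to\infty}\frac{1}{\log N}\sum_{n=1}^{N}\frac{f(S_n x)}{n}=\int f\d\mu.
\]
The key identity is that for every prime $p$, the substitution $n=pm$ and $S_{pm}=TS_m$ give
\[
\sum_{\substack{n\leq N\\ p\mid n}}\frac{f(S_n x)}{n}=\frac{1}{p}\sum_{m\leq N/p}\frac{(f\circ T)(S_m x)}{m}.
\]
Summing over primes $p\leq P$, dividing by $V_P:=\sum_{p\leq P}1/p\approx\log\log P$, and using $\log(N/p)/\log N\to 1$ produces
\[
\frac{1}{V_P\log N}\sum_{n\leq N}\frac{\omega_{\leq P}(n)\,f(S_n x)}{n}\;\approx\;L_N(f\circ T)(x),
\]
where $\omega_{\leq P}(n):=\#\{p\leq P:p\mid n\}$ and $L_N g(x):=\frac{1}{\log N}\sum_{n\leq N}g(S_n x)/n$. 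A Tur\'an--Kubilius-type estimate shows that $\omega_{\leq P}(n)/V_P\to 1$ in $L^2$ of the logarithmic probability on $[1,N]$, so the left-hand side equals $L_N f(x)+o_P(1)$, uniformly in $x$. Hence $L_N f$ is asymptotically $T$-invariant, and unique ergodicity of $(X,T,\mu)$ forces $L_N f(x)\to\int f\d\mu$, uniformly in $x\in X$: iterating the near-$T$-invariance gives $L_N f(x)\approx L_N\big(\tfrac{1}{K}\sum_{k<K}f\circ T^k\big)(x)$, and the classical ergodic means $\tfrac{1}{K}\sum_{k<K}f\circ T^k$ converge uniformly to $\int f\d\mu$.

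\emph{Passage to Cesaro.} The main obstacle is to promote the logarithmic statement to the Cesaro statement, for which no elementary Tauberian theorem suffices: the sequence $f(S_n x)$ is oscillatory and has no obvious slow-variation property, and there exist bounded sequences with a logarithmic mean but no Cesaro mean. My plan would be to either (i) rerun the Mertens/Tur\'an--Kubilius argument directly on Cesaro averages, whose critical missing input is a stability estimate $A_{N/p}f(x)=A_N f(x)+o(1)$ as $N\to\infty$ for each fixed prime $p$ (to be extracted from the structure of the particular sequence rather than any generic Tauberian principle), or (ii) decompose $[1,N]$ into multiplicative dyadic blocks and use short-interval concentration of $\Omega$ (Hardy--Ramanujan, Tur\'an--Kubilius) to argue that each block's Cesaro average inherits the limit $\int f\d\mu$ from the logarithmic averages. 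I expect this Cesaro-from-logarithmic step to contain the main novel technical work of the proof.
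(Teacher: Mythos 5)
Your logarithmic-average step is plausible and essentially routine (the substitution $n=pm$, Tur\'an--Kubilius, and unique ergodicity do give the logarithmic version), but the argument has a genuine gap at exactly the point you flag: the passage to Ces\`aro averages, which is the real content of the theorem (it is where the Prime Number Theorem lives). Your strategy (i) hinges on the stability estimate $\frac{1}{N/p}\sum_{n\leq N/p} f(T^{\Omega(n)}x) = \frac{1}{N}\sum_{n\leq N}f(T^{\Omega(n)}x)+o(1)$, and this dilation-invariance is precisely the crux: it is what fails for the model obstruction $a(n)=n^{it}$, whose Ces\`aro averages satisfy $A_{N/p}a\approx p^{-it}A_N a$ while logarithmic means behave tamely, and ruling out such Archimedean-character behavior for $f(T^{\Omega(n)}x)$ is morally equivalent to the theorem itself (it is the hard case in Hal\'asz-type results). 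You offer no mechanism for it. Strategy (ii) is not developed; Hardy--Ramanujan concentration of $\Omega(n)$ around $\log\log n$ does not by itself convert a logarithmic mean into a Ces\`aro mean.

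The paper's proof avoids ever comparing averages at the scales $N$ and $N/p$. It works with Ces\`aro averages in $n$ throughout: its Tur\'an--Kubilius-type input (Proposition 2.1) bounds the difference between $\frac1N\sum_{n\leq N}a(n)$ and the double average $\mathbb{E}^{\log}_{m\in B}\,\frac{1}{N/m}\sum_{n\leq N/m}a(mn)$ by the gcd-correlation $\big(\mathbb{E}^{\log}_{m\in B}\mathbb{E}^{\log}_{n\in B}(\gcd(m,n)-1)\big)^{1/2}$, the logarithmic averaging being applied only over the finite multiplier set $B$. Asymptotic $T$-invariance of the empirical measures is then obtained by applying this with two different multiplier sets: $B_1$ a set of primes (so $\Omega(pn)+1=\Omega(n)+2$) and $B_2$ a set of $2$-almost primes (so $\Omega(qn)=\Omega(n)+2$), constructed in Lemma 2.2 so that $|B_1\cap[\rho^j,\rho^{j+1})|=|B_2\cap[\rho^j,\rho^{j+1})|$ for every $j$ while both have small gcd-correlation. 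Since each $p\in B_1$ is matched with a $q\in B_2$ in the same $\rho$-adic block, the inner Ces\`aro averages over $[N/p]$ and $[N/q]$ agree up to $O(\epsilon)$ (Lemma 2.3), whence $\frac1N\sum_{n\leq N}f(T^{\Omega(n)}x)$ and $\frac1N\sum_{n\leq N}f(T^{\Omega(n)+1}x)$ differ by $O(\epsilon^{1/2})+o_{N\to\infty}(1)$, and unique ergodicity finishes. This matched prime/almost-prime comparison at equal scales is the idea missing from your plan; without it, or an actual proof of your stability estimate, the Ces\`aro statement is not reached.
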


One can interpret \cref{thm_dynamical_MVT_Omega} as saying that for any uniquely ergodic system $(X,\mu,T)$ and any point $x\in X$ the orbit $T^{\Omega(n)}x$ is uniformly distributed in the space $X$ with respect to $\mu$.

It is straightforward to recover the Prime Number Theorem -- in the shape of \eqref{eqn_PNT_lio} -- from \cref{thm_dynamical_MVT_Omega}. Indeed, consider the additive topological dynamical system $(X,T)$ where $X=\{0,1\}$ and $T\colon x\mapsto {x+1}\pmod{2}$; this system is often referred to as \define{rotation on two points}. With its help we can write the Liouville function $\lio(n)$ in the form $f(T^{\Omega(n)}x)$ by taking $x=0$ and defining $f\colon \{0,1\}\to \{-1,1\}$ as $f(0)=1$ and $f(1)=-1$.
Since rotation on two points is uniquely ergodic with respect to the unique invariant probability measure defined by $\mu(\{0\})=\mu(\{1\})=1/2$, and since $f$ has zero integral with respect to this measure, we see that \eqref{eqn_ud_of_Omega_1} implies \eqref{eqn_PNT_lio}. 
In a totally similar way one can recover the Pillai-Selberg Theorem from \cref{thm_dynamical_MVT_Omega} by considering a cyclic \define{rotation on $m$ points} instead of a rotation on two points.

To see that the \Erdos{}-Delange Theorem also follows from \cref{thm_dynamical_MVT_Omega}, consider the uniquely ergodic system $(X,\mu,T)$ where $X$ is the torus $\T\coloneqq \R/\Z$, $T\colon x\mapsto {x+\alpha}\pmod{1}$ for some $\alpha\in\R\setminus\Q$, and $\mu$ is the normalized Haar measure on $\T$; this system is usually called \define{rotation by $\alpha$}. 
Let $x=0$ and, for $h\in\Z\setminus\{0\}$, let $f(x)=e(hx)$, where $e(x)$ is shorthand for $e^{2\pi i x}$. Then, by \cref{thm_dynamical_MVT_Omega},
\begin{equation*}
\label{eqn_ud_Omega_torus}
\lim_{N\to\infty}\frac{1}{N}\sum_{n=1}^N e\left( h\Omega(n)\alpha \right) ~=~0, \qquad\forall h\in\Z\setminus\{0\},
\end{equation*}
which in view of Weyl's equidistribution criterion (see \cite[\S{}1, Theorem 2.1]{KN74}) is equivalent to the assertion that $\Omega(n)\alpha$, $n\in\N$, is uniformly distributed ${}\bmod{1}$. 

\cref{thm_dynamical_MVT_Omega} can also be used to derive some new results. But before we go into more details about the various number-theoretic applications of \cref{thm_dynamical_MVT_Omega}, let us make a few relevant remarks regrading its proof.

\begin{Remark}
\label{rem_methods}
The classical proofs of the Pillai-Selberg Theorem and the \Erdos{}-Delange Theorem rely on sophisticated machinery from analytic number theory.
By way of contrast, our proof of \cref{thm_dynamical_MVT_Omega} is elementary and hinges on new ideas and combinatorial tools developed in \cref{sec_distr_orbits_along_Omega}.
Admittedly, a down-side of our ``soft'' approach is that we do not obtain any noteworthy asymptotic bounds.
\end{Remark}

\begin{Remark}
Our proof of \cref{thm_dynamical_MVT_Omega} can be adapted to give a new elementary proof of the Prime Number Theorem. This is carried out in \cite{Richter21}, where it is shown that
\begin{align}
\label{eqn_sequential_Omega}
\frac{1}{N}\sum_{n=1}^N a(\Omega(n)+1)
=\frac{1}{N}\sum_{n=1}^N a(\Omega(n))+\oh_{N\to\infty}(1)
\end{align}
holds for any bounded $a\colon\N\to\C$.
Although \eqref{eqn_sequential_Omega} is more general\footnote{Here is a short proof that \eqref{eqn_sequential_Omega} implies \eqref{eqn_ud_of_Omega_1}: By applying \eqref{eqn_sequential_Omega} $K$-times to the sequence $a(n)=f(T^nx)$, it follows that
\[
\frac{1}{N}\sum_{n=1}^N \frac{1}{K}\sum_{k=1}^K f(T^{\Omega(n)+k}x)
=\frac{1}{N}\sum_{n=1}^N f(T^{\Omega(n)}x)+\oh_{N\to\infty}(1)
\]
holds for all $K\in\N$. Then \eqref{eqn_ud_of_Omega_1} follows from the fact that for every uniquely ergodic system one has $\sup_{y\in X}|\int f\d\mu - \frac{1}{K}\sum_{k=1}^K f(T^ky)|=\oh_{K\to\infty}(1)$.} than \cref{thm_dynamical_MVT_Omega}, it is the dynamical character of the latter that caught our interest and will lead naturally to our other main results, Theorems~\ref{thm_dynamical_MVT_fg_sue}, \ref{thm_ortho_fg_sue_nilsequences}, and~\ref{thm_ortho_fg_sue_horocycle} below.
\end{Remark}

\begin{Remark}
\label{rem_proof_A_gen}
Let $\pi_k(N)$ denote the cardinality of the set $\{1\leq n\leq N: \Omega(n)=k\}$ and define $w_N(k)\coloneqq \pi_k(N)/N$.
An alternative way of proving \cref{thm_dynamical_MVT_Omega}, which is significantly different from the approach that we use in \cref{sec_distr_orbits_along_Omega}, consists of writing
\[
\frac{1}{N}\sum_{n=1}^N f\big(T^{\Omega(n)}x\big)\,=\,\frac{1}{N} \sum_{k\in\N} \pi_k(N)f\big(T^{k}x\big)\,=\,
\sum_{k\in\N} w_N(k) f\big(T^{k}x\big)
\]
and showing that the weights $w_N(k)=\pi_k(N)/N$ are asymptotically shift-invariant in the sense that $\sum_{k\in\N} |w_N(k+1)-w_N(k)|=\oh_{N\to\infty}(1)$.  
With some effort, the latter can be derived from work of \Erdos{}, who established good asymptotic bounds for $\pi_k(N)$ uniformly over $k$ \cite{Erdos48a}.
It follows that $\frac{1}{N}\sum_{n=1}^N f\big(T^{\Omega(n)}x\big)$ is asymptotically $T$-invariant, which implies $\lim_{N\to\infty}\frac{1}{N}\sum_{n=1}^N f\big(T^{\Omega(n)}x\big)=\int f\d\mu$ by unique ergodicity. 
The authors thank A.~Kanigowski and M.~Radziwi\l{}\l{} for bringing this approach to our attention. 
One of the advantages of our method is that
it nicely generalizes to a proof of \cref{thm_dynamical_MVT_fg_sue} (see also \cref{rem_proof_B_gen} below). 
\end{Remark}

\begin{Remark}
Motivated by \cref{thm_dynamical_MVT_Omega} and Birkhoff's Pointwise Ergodic Theorem, it is natural to wonder whether for any ergodic measure-preserving system $(X,\mathcal{B},\mu,T)$ and any integrable function $f$ the averages  
\[
\lim_{N\to\infty}\frac{1}{N}\sum_{n=1}^N f\big(T^{\Omega(n)}x\big)
\]
converge almost everywhere.
Somewhat surprisingly, the answer is no. In forthcoming work \cite{Loyd21arXiv}, it is shown that for any non-atomic ergodic probability measure-preserving system $(X,\mathcal{B},\mu,T)$ the sequence $T^{\Omega(n)}$ has the \define{strong sweeping-out property}, meaning there exists a measurable set $A\in\mathcal{B}$ such that for almost all $x\in X$ one has
\begin{align*}
\limsup_{N\to\infty}\frac{1}{N}\sum_{n=1}^N 1_A\big(T^{\Omega(n)}x\big)
=1
\qquad\text{and}\qquad
\liminf_{N\to\infty}\frac{1}{N}\sum_{n=1}^N 1_A\big(T^{\Omega(n)}x\big)
=0.
\end{align*}
\end{Remark}

Let us now formulate some new number-theoretic corollaries that can be derived from \cref{thm_dynamical_MVT_Omega}. 
In \cite{Weyl16}, Weyl proved that a polynomial sequence $Q(n)=c_k n^k + \ldots + c_1 n + c_0$, $n\in\N$, is uniformly distributed mod~${1}$ if and only if at least one of the coefficients $c_1,\ldots,c_k$ is irrational.
Furstenberg gave a dynamical proof of Weyl's result utilizing the fact that any sequence of the form $e(Q(n))$, where $Q$ is a real polynomial, can be generated dynamically with the help of \define{unipotent affine transformations}\footnote{A \define{unipotent affine transformation} $T\colon\T^d\to\T^d$ is a transformation of the form $T(x)\coloneqq Ax+b$ where $A$ is a $d\times d$ unipotent integer matrix and $b$ is an element in $\T^d$.} on tori (see \cite[pp. 67--69]{Furstenberg81a}).
By invoking Furstenberg's method, we can derive from \cref{thm_dynamical_MVT_Omega} the following variant of Weyl's theorem, which can be viewed as a polynomial generalization of the \Erdos{}-Delange Theorem.

\begin{Corollary}
\label{cor_polynomoial_ud_Omega}
Let $Q(n)=c_k n^k + \ldots + c_1 n + c_0$. Then $Q(\Omega(n))$, $n\in\N$, is uniformly distributed ${}\bmod{1}$ if and only if at least one of the coefficients $c_1,\ldots,c_k$ is irrational.
\end{Corollary}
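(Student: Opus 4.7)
The plan has two directions.

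For the necessity direction, suppose every $c_i$ with $i \ge 1$ is rational, and let $D$ be a common denominator of $c_1,\ldots,c_k$. Then $c_1 m + \cdots + c_k m^k \in \frac{1}{D}\Z$ for every $m \in \Z$, so $Q(\Omega(n)) \bmod 1$ is confined to the finite set $\{c_0 + j/D \bmod 1 : 0 \le j < D\}$. A sequence supported on finitely many points of $\T$ cannot be uniformly distributed, and the necessity follows.

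For the sufficiency direction, suppose at least one of $c_1,\ldots,c_k$ is irrational. By Weyl's equidistribution criterion it suffices to show
\begin{equation*}
\lim_{N\to\infty} \frac{1}{N}\sum_{n=1}^N e\bigl(hQ(\Omega(n))\bigr) \,=\, 0 \qquad \text{for every } h \in \Z\setminus\{0\}.
\end{equation*}
Fix such an $h$; since $h\neq 0$, the polynomial $hQ$ still has at least one irrational non-constant coefficient. Following the method described in \cite[pp.\ 67--69]{Furstenberg81a}, I would produce, for some $d\in\N$, a unipotent affine transformation $T\colon\T^d\to\T^d$, a point $x\in\T^d$, and a continuous function $f\colon\T^d\to\C$ such that $e(hQ(m)) = f(T^m x)$ for all $m\in\N$, with $(\T^d,T)$ uniquely ergodic with respect to its invariant probability measure $\mu$ and $\int_{\T^d} f \d\mu = 0$. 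Granted this representation, \cref{thm_dynamical_MVT_Omega} applied to $(\T^d,\mu,T)$ and $f$ yields
\begin{equation*}
\frac{1}{N}\sum_{n=1}^N e\bigl(hQ(\Omega(n))\bigr) \,=\, \frac{1}{N}\sum_{n=1}^N f\bigl(T^{\Omega(n)}x\bigr) \,\longrightarrow\, \int_{\T^d} f \d\mu \,=\, 0,
\end{equation*}
as required.

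The sole nontrivial point is the dynamical realization step. When the leading coefficient $c_k$ is irrational the standard Furstenberg tower $T(y_1,\ldots,y_k) = (y_1+\alpha,\, y_2+y_1,\, \ldots,\, y_k+y_{k-1})$ with a suitably chosen irrational $\alpha$ works directly, since then $y_k^{(n)} = \binom{n}{k}\alpha$, and the remaining coefficients of $hQ$ can be absorbed by adding translation constants into the successive skew products and by translating the starting point; unique ergodicity of the tower over an irrational circle rotation and the vanishing of the character integral $\int f \d\mu$ are then automatic. When the irrationality sits at a lower level $i<k$, one must rig the translation at the $i$-th rung so that the irrational parameter propagates through the subsequent skew products to produce the prescribed polynomial in the top coordinate while still driving unique ergodicity; this inductive bookkeeping, which dynamically mirrors the van der Corput step in Weyl's classical proof, is the only real obstacle and is precisely what Furstenberg's construction accomplishes. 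Once the representation is secured, \cref{thm_dynamical_MVT_Omega} does the remaining work.
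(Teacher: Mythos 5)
Your skeleton is the paper's: the necessity direction is the same trivial finite-image observation (the paper does not even spell it out), and for sufficiency you fix $h\neq 0$, realize $e(hQ(m))$ as an orbit sequence of a uniquely ergodic unipotent affine system, apply \cref{thm_dynamical_MVT_Omega}, and conclude via Weyl's criterion. The genuine gap is in your realization step: you insist on a representation $e(hQ(m))=f(T^m x)$ with the \emph{full torus} $(\T^d,T)$ uniquely ergodic, and you claim that when the irrationality sits at a lower level one can ``rig the translation at the $i$-th rung'' so that this still holds. That is false. Take $Q(n)=\alpha n+\tfrac13 n^2$ with $\alpha$ irrational (so $c_1$ is irrational and the hypothesis holds) and $h=1$. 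If $a(n)\coloneqq e(\alpha n+\tfrac13 n^2)=f(T^nx)$ with $T$ affine unipotent on $\T^d$ and $(\T^d,T)$ uniquely ergodic, then the unique invariant measure is Lebesgue (which is invariant under every unipotent affine map), so every orbit is dense and $|f|\equiv 1$ on $\T^d$. But $a(n+1)\overline{a(n)}=e\big(\alpha+\tfrac{2n+1}{3}\big)$ takes exactly three values, so the continuous function $(f\circ T)\,\overline{f}$ has finite image on a dense subset of the connected space $\T^d$ and is therefore constant, contradicting that it takes three distinct values along the orbit. So no tuning of translation constants makes the full torus work; this is not what Furstenberg's construction accomplishes.

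The repair is exactly the paper's proof. Keep the unipotent affine map on $\T^k$ whose first coordinate is translated by the constant $k$-th difference of $hQ$ and whose remaining coordinates are the skew shifts $x_i\mapsto x_i+x_{i-1}$, start at the point built from the iterated difference polynomials of $hQ$ so that the top coordinate of $T^nx$ is exactly $hQ(n)$ mod $1$, and then pass to the orbit closure $X$ of $x$, which may be a proper and even disconnected subset of $\T^k$. Transitive unipotent affine transformations are uniquely ergodic (they are niltranslations; see \cref{prop_dynamics-nilrotation}), so $(X,\mu,T)$ is uniquely ergodic and \cref{thm_dynamical_MVT_Omega} applies with $f(x_1,\ldots,x_k)=e(x_k)$. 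Finally, the vanishing of the integral is not obtained by a character computation: unique ergodicity gives $\int f\d\mu=\lim_{N\to\infty}\frac1N\sum_{n=1}^N e(hQ(n))$, and this limit is $0$ by Weyl's classical theorem because some coefficient among $hc_1,\ldots,hc_k$ is irrational. With these two adjustments --- orbit closure instead of the full torus, and Weyl's theorem instead of an ``automatic'' integral computation --- your argument coincides with the paper's.
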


\cref{thm_dynamical_MVT_Omega} also implies other results similar to \cref{cor_polynomoial_ud_Omega}. For example, one can show that if $\beta$ is irrational and $\alpha\in\R$ is rationally independent from $\beta$ then the sequences $\lfloor \Omega(n)\alpha\rfloor\beta$ and $\{\Omega(n)\alpha\}\Omega(n)\beta$, $n\in\N$, are uniformly distributed ${}\bmod{1}$, where $\{x\}\coloneqq x-\lfloor x\rfloor $ denotes the fractional part of a real number $x$. Indeed, the sequences $(\lfloor n\alpha\rfloor\beta)_{n\in\N}$ and $(\{n\alpha\}n\beta)_{n\in\N}$ belong the the class of so-called \define{generalized polynomials}, which is the class of functions generated by starting with conventional real polynomials and applying in an arbitrary order the operations of taking the integer part $\lfloor.\rfloor$, addition, and multiplication.\footnote{Generalized polynomials also appear in various other contexts under the name \define{bracket polynomials}, see for instance \cite{GTZ12}.}
In particular, the following are examples of generalized polynomials: $p_1+p_2\lfloor p_3\rfloor$, $\lfloor p_1\rfloor^2 \{p_2\lfloor p_3\rfloor +p_4\}$, and $ \lfloor \lfloor p_1 \rfloor p_2+ \{p_3\}^2 p_4\rfloor+ \{p_5\}\lfloor p_6\rfloor^3$, where $p_1,\ldots, p_6$ are any real polynomials.
As was shown in \cite[Theorem A]{BL07}, any bounded generalized polynomial can be written as $f(T^nx)$ where $(X,T)$ is a \define{nilsystem}\footnote{\label{ftnt_1}Let $G$ be a nilpotent Lie group, let $\Gamma$ be a discrete and co-compact subgroup of $G$, and take $X=G/\Gamma$. Note that $G$ acts continuously on $X$ via left-multiplciation. Let $a\in G$ be a fixed group element and define $T\colon X\to X$ as $T(x)\coloneqq ax$ for all $x\in X$. The resulting additive topological dynamical system $(X,T)$ is called a \define{nilsystem}.} and $f$ is piecewise polynomial. We will show in \cref{sec_appl_thmA} that this fact, combined with \cref{thm_dynamical_MVT_Omega}, implies the following extension of \cref{cor_polynomoial_ud_Omega}.

\begin{Corollary}
\label{cor_gneralized_polynomoial_ud_Omega}
Let $Q\colon\N\to\R$ be a generalized polynomial. Then the sequence $Q(\Omega(n))$ is uniformly distributed ${}\bmod{1}$ if and only if $Q(n)$ is uniformly distributed ${}\bmod{1}$.
\end{Corollary}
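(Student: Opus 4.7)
The plan is to combine Weyl's equidistribution criterion with the representation of bounded generalized polynomials as orbit observables on nilsystems, and then apply \cref{thm_dynamical_MVT_Omega}. By Weyl's criterion, both $Q(\Omega(n))$ and $Q(n)$ are uniformly distributed mod $1$ precisely when, for every $h \in \Z \setminus \{0\}$, the exponential averages $\frac{1}{N}\sum_{n=1}^N e(hQ(\Omega(n)))$ and $\frac{1}{N}\sum_{n=1}^N e(hQ(n))$ tend to $0$ as $N \to \infty$. It therefore suffices to prove that, for each such $h$, these two averages converge to the \emph{same} limit.

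Fix $h \in \Z \setminus \{0\}$ and view $m \mapsto e(hQ(m))$ as a bounded complex generalized polynomial (applying the cited representation theorem to its real and imaginary parts). By \cite[Theorem A]{BL07} there exist a nilsystem $(X,T)$, a point $x_0 \in X$, and a Riemann integrable function $f\colon X \to \C$ with $e(hQ(m)) = f(T^m x_0)$ for all $m \in \N$. Replacing $X$ by the orbit closure $X' \coloneqq \overline{\{T^m x_0 : m \in \N\}}$, the restricted system $(X',T)$ is still a nilsystem (by the standard structure theory of orbit closures on nilmanifolds) and is minimal by construction; hence, by the classical theorem of Parry that every minimal nilsystem is uniquely ergodic, there is a unique $T$-invariant Borel probability measure $\mu'$ on $X'$.

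Applying \cref{thm_dynamical_MVT_Omega} to $(X',\mu',T)$ at the point $x_0$ would yield
\[
\lim_{N \to \infty} \frac{1}{N}\sum_{n=1}^N e\big(hQ(\Omega(n))\big) ~=~ \int_{X'} f \d\mu',
\]
while the classical uniform ergodic theorem for uniquely ergodic systems would yield
\[
\lim_{N \to \infty} \frac{1}{N}\sum_{n=1}^N e\big(hQ(n)\big) ~=~ \int_{X'} f \d\mu'.
\]
The two averages then share the same limit and so vanish simultaneously, completing the equivalence via Weyl's criterion.

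The main technical obstacle is that \cref{thm_dynamical_MVT_Omega} is stated only for continuous $f$, whereas the function produced by \cite{BL07} is merely Riemann integrable. I would overcome this through the standard sandwich argument: since the discontinuity set of $f$ has $\mu'$-measure zero, for every $\epsilon > 0$ one can find continuous functions bracketing $\Re f$ and $\Im f$ from above and below with $\mu'$-integral gap less than $\epsilon$, so applying \cref{thm_dynamical_MVT_Omega} to those continuous brackets pinches the desired $\Omega$-average to within $O(\epsilon)$ of $\int_{X'} f \d\mu'$; the same approximation handles the ergodic average along $T^n$. A minor subsidiary point is verifying that the orbit closure of a point in a nilmanifold is again a nilmanifold, which is a routine invocation of structural results on nilsystems.
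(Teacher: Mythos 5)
Your proposal follows essentially the same route as the paper's proof: Weyl's criterion, the representation of bounded generalized polynomials on a nilsystem via \cite[Theorem A]{BL07}, passing to the orbit closure and invoking unique ergodicity of transitive nilsystems, and then upgrading \cref{thm_dynamical_MVT_Omega} from continuous to Riemann integrable observables by the standard sandwich argument. Two small points of presentation to correct, neither of which is a real gap: $\Re\, e(hQ(m))$ and $\Im\, e(hQ(m))$ are not themselves generalized polynomials, so one should apply \cite{BL07} to the bounded generalized polynomial $\{Q(m)\}$ and then compose with the continuous map $t\mapsto e(ht)$ (as the paper does); and the orbit closure is transitive rather than minimal ``by construction'', but for nilsystems transitivity already yields unique ergodicity (\cref{prop_dynamics-nilrotation}), which is all you need.
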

For an ample supply of concrete examples of generalized polynomials that are uniformly distributed ${}\bmod{1}$ we refer the reader to \cite{Haaland92,Haaland94,BL07,BKS20}.

Another corollary of \cref{thm_dynamical_MVT_Omega} concerns an analogue of an old theorem of Gelfond. For integers $q\geq 2$ and $n\geq 1$ let $s_q(n)$ denote the sum of digits of $n$ in base $q$, that is, $s_q(n)=\sum_{k\geq 0} a_k$ where $n= \sum_{k\geq 0} a_k q^k$ for $a_0,a_1,\ldots\in\{0,1,\ldots,q-1\}$.
Gelfond \cite{Gelfond68} showed that if $m$ and $q-1$ are coprime then for all $r\in\{0,1,\ldots,m-1\}$ the set of $n$ for which $s_q(n)\equiv {r}\bmod{m}$ has asymptotic density $1/m$. 
In \cref{sec_appl_thmA} we explain how one can combine \cref{thm_dynamical_MVT_Omega} with well-known results regarding the unique ergodicity of certain substitution systems to obtain the following result. 

\begin{Corollary}
\label{cor_odious_evil_Omega}
If $m$ and $q-1$ are coprime then for all $r\in\{0,1,\ldots,m-1\}$ the set of $n$ for which $s_q(\Omega(n))\equiv {r}\bmod{m}$ has asymptotic density $1/m$.
\end{Corollary}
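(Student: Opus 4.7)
The plan is to deduce the corollary by combining \cref{thm_dynamical_MVT_Omega} with two classical inputs: the unique ergodicity of the constant-length substitution subshift that encodes the sequence $(s_q(n)\bmod m)_{n\geq 0}$, and Gelfond's theorem itself, which computes the mean value of $\chi(s_q(n))$ under the hypothesis $\gcd(m,q-1)=1$. By discrete Fourier inversion on $\Z/m\Z$, the claimed density statement is equivalent to showing that
\[
\lim_{N\to\infty} \frac{1}{N}\sum_{n=1}^N \chi\bigl(s_q(\Omega(n))\bigr) \,=\, 0
\]
for every non-trivial character $\chi$ of $\Z/m\Z$.

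To realize $\chi\circ s_q$ as a continuous function sampled along the orbit of a uniquely ergodic system, I would consider the one-sided sequence $u=(u_k)_{k\geq 0}$ with $u_k\coloneqq s_q(k)\bmod m$. The identity $s_q(qk+r)=s_q(k)+r$ for $0\leq r<q$ exhibits $u$ as the unique fixed point starting with $0$ of the constant-length-$q$ substitution $\sigma$ on the alphabet $\Z/m\Z$ defined by
\[
\sigma(a)\,\coloneqq\, a,\,a+1,\,\ldots,\,a+q-1 \pmod{m}.
\]
A direct check shows that a sufficiently high power of the substitution matrix of $\sigma$ has all positive entries, so $\sigma$ is primitive. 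Consequently the subshift $(X_u,T)$ generated by $u$ under the left shift $T$ is uniquely ergodic; this is a classical result for primitive constant-length substitutions (see, e.g., Queff\'elec). Let $\mu$ denote its unique invariant probability measure, and define $f\in\Cont(X_u)$ by $f(y)\coloneqq\chi(y_0)$. Then $f(T^k u)=\chi(u_k)=\chi(s_q(k))$ for every $k\geq 0$.

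Applying \cref{thm_dynamical_MVT_Omega} to $(X_u,\mu,T)$ at $x=u$, the target average rewrites as
\[
\frac{1}{N}\sum_{n=1}^N \chi\bigl(s_q(\Omega(n))\bigr) \,=\, \frac{1}{N}\sum_{n=1}^N f\bigl(T^{\Omega(n)}u\bigr) \,\xrightarrow{\,N\to\infty\,}\, \int f\d\mu.
\]
By unique ergodicity the integral on the right coincides with $\lim_{N\to\infty}\frac{1}{N}\sum_{n=0}^{N-1}\chi(s_q(n))$, which vanishes by Gelfond's theorem precisely because $\gcd(m,q-1)=1$. This delivers the Fourier vanishing above and completes the proof. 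I expect that the only point calling for any nontrivial verification is the unique ergodicity of the substitution subshift $(X_u,T)$; once this and Gelfond's mean-value statement are granted, the passage from $s_q(n)$ to $s_q(\Omega(n))$ is an immediate consequence of \cref{thm_dynamical_MVT_Omega}.
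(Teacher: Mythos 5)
Your proposal is correct and follows essentially the same route as the paper: both realize $\chi(s_q(n))$ as a continuous function evaluated along the orbit of a uniquely ergodic shift system generated by the digit-sum sequence (the paper takes the orbit closure of $\bigl(e(cs_q(n))\bigr)_{n}$ in $\mathbb{D}^\N$ and cites Queff\'elec, while you present it as a primitive constant-length substitution subshift), then apply \cref{thm_dynamical_MVT_Omega} to transfer the average from $n$ to $\Omega(n)$ and invoke Gelfond's theorem, via Fourier inversion on $\Z/m\Z$, to evaluate the limit. The only cosmetic difference is your explicit primitivity check in place of the paper's direct citation of unique ergodicity, so no gap remains.
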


In terminology introduced in \cite{BCG01}, an \define{odious} number is a non-negative integer with an odd number of $1$s in its binary expansion, whereas an \define{evil} number has an even number of $1$s. The special case of \cref{cor_odious_evil_Omega} where $m=q=2$ asserts that, asymptotically, $\Omega(n)$ is as often an odious number as it is an evil number.

Our next application of \cref{thm_dynamical_MVT_Omega} connects to the classical \define{\Mobius{} function} $\mob\colon\N\to\{-1,0,1\}$, which is defined as $\mob(n)=\lio(n)$ if $n$ is squarefree and $\mob(n)=0$ otherwise. 
In analogy to \eqref{eqn_PNT_lio}, one has
\begin{equation}
\label{eqn_PNT_mob}
\lim_{N\to\infty}\frac{1}{N}\sum_{n=1}^N \mob(n)~=~\lim_{N\to\infty}\frac{1}{N}\hspace{-.6 em}\sum_{1\leq n\leq N\atop n\, \mathrm{squarefree}}\hspace{-.6 em}\lio(n)~=~0,
\end{equation}
which is yet another well-known equivalent form\footnote{\label{ftn_PNT_equivalence}Equation \eqref{eqn_PNT_mob} was first observed by von Mangoldt \cite[pp.\ 849--851]{vonMangoldt97}; its equivalence to the Prime Number Theorem goes back to Landau \cite{Landau11,Landau12}, but can also be found in most textbooks on number theory (see for example \cite[\S 4.4]{TM00} or \cite[\S 4.9]{Apostol76}); the equivalence between \eqref{eqn_PNT_lio} and \eqref{eqn_PNT_mob} can be proved using the two basic formulas
$
\frac{1}{N}\sum_{n=1}^N\mob(n)= \sum_{d=1}^N \frac{\mob(d)}{d^2}( \frac{1}{N/d^2}\sum_{n\leq N/d^2}\lio(n))
$
and
$
\frac{1}{N}\sum_{n=1}^N \lio(n)=\sum_{d=1}^N \frac{1}{d^2}(\frac{1}{N/d^2}\sum_{n\leq N/d^2}\mob(n))
$; see \cite[pp.\ 631--632]{Landau09b}, \cite[p.\ 582]{Diamond82} for the former and \cite[pp.\ 620--621]{Landau09b}, \cite[Eq.\ (16)]{Axer10} for the latter.} of the Prime Number Theorem.
The next corollary provides a dynamical generalization of \eqref{eqn_PNT_mob}. 

\begin{Corollary}
\label{cor_dynamical_MVT_squarefree}
Let $(X,\mu,T)$ be uniquely ergodic.
Then
\begin{equation}
\label{eqn_ud_of_squarefree}
\lim_{N\to\infty}\frac{1}{N}\hspace{-.6 em}\sum_{1\leq n\leq N\atop n\, \mathrm{squarefree}} \hspace{-.6 em} f\big(T^{\Omega(n)}x\big)
~=~\frac{6}{\pi^2} \left(\int f\d\mu\right).
\end{equation}
for every $x\in X$ and $f\in\Cont(X)$.
\end{Corollary}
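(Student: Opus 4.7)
My plan is to reduce the corollary to \cref{thm_dynamical_MVT_Omega} by using the classical identity $\mathbf{1}_{\{n\text{ squarefree}\}}=\sum_{d^2\mid n}\mob(d)$. Substituting this into the left-hand side of \eqref{eqn_ud_of_squarefree} and swapping the order of summation gives
\[
\frac{1}{N}\sum_{1\leq n\leq N\atop n\,\text{squarefree}} f\big(T^{\Omega(n)}x\big)
~=~\frac{1}{N}\sum_{d\leq\sqrt{N}}\mob(d)\sum_{m\leq N/d^2} f\big(T^{\Omega(d^2m)}x\big).
\]
The multiplicativity of $\Omega$ yields $\Omega(d^2m)=2\Omega(d)+\Omega(m)$, so each inner sum can be rewritten as $\sum_{m\leq N/d^2}f(T^{\Omega(m)}y_d)$ where $y_d\coloneqq T^{2\Omega(d)}x\in X$. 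This is precisely the form to which \cref{thm_dynamical_MVT_Omega} applies.

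Next, for each fixed $d\in\N$ I would apply \cref{thm_dynamical_MVT_Omega} to the uniquely ergodic system $(X,\mu,T)$ at the point $y_d$, obtaining
\[
\lim_{N\to\infty}\frac{1}{N}\sum_{m\leq N/d^2} f\big(T^{\Omega(m)}y_d\big)~=~\frac{1}{d^2}\int f\d\mu.
\]
Hence for any fixed cutoff $D$, the partial sum over $d\leq D$ converges to $\big(\sum_{d\leq D}\mob(d)/d^2\big)\int f\d\mu$, which in turn tends to $\zeta(2)^{-1}\int f\d\mu=\tfrac{6}{\pi^2}\int f\d\mu$ as $D\to\infty$.

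The step that needs the most care is the uniform control of the tail $D<d\leq\sqrt{N}$, where Theorem~A gives no information as $d$ varies with $N$. Here, however, the trivial bound $|f|\leq\|f\|_\infty$ suffices, since
\[
\left|\frac{1}{N}\sum_{D<d\leq\sqrt{N}}\mob(d)\sum_{m\leq N/d^2}f\big(T^{\Omega(d^2m)}x\big)\right|
~\leq~\|f\|_\infty\sum_{d>D}\frac{1}{d^2}
~\longrightarrow~0
\]
as $D\to\infty$, uniformly in $N$. Combining the two regimes via the standard $\lim_{N}$-then-$\lim_{D}$ argument (with the tail bound allowing the two limits to be interchanged) yields \eqref{eqn_ud_of_squarefree}. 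The only subtlety is ensuring the $D$-tail estimate is genuinely uniform in $N$, which is precisely what the absolute convergence of $\sum \mob(d)/d^2$ supplies.
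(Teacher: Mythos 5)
Your proposal is correct and follows essentially the same route as the paper's own proof: both insert the identity $\sum_{d^2\mid n}\mob(d)=\1_{\square\text{-free}}(n)$, swap the order of summation, apply \cref{thm_dynamical_MVT_Omega} to each term $\frac{1}{N}\sum_{m\leq N/d^2}f(T^{\Omega(d^2m)}x)$ (the paper leaves implicit the observation $\Omega(d^2m)=2\Omega(d)+\Omega(m)$, i.e.\ the shift of base point to $T^{2\Omega(d)}x$, which you spell out), and control the tail $d>D$ uniformly in $N$ via $\sum_{d>D}d^{-2}\to 0$ before using $\sum_d \mob(d)/d^2=6/\pi^2$. No gaps; your treatment of the truncation is, if anything, slightly more explicit than the paper's.
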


Note that \cref{cor_dynamical_MVT_squarefree} applied to rotation on two points yields \eqref{eqn_PNT_mob} in analogy to the way that \cref{thm_dynamical_MVT_Omega} applied to rotation on two points yielded  \eqref{eqn_PNT_lio}.
One can also derive from \cref{thm_dynamical_MVT_Omega} a generalization of \eqref{eqn_ud_of_squarefree} asserting that
\begin{equation}
\label{eqn_ud_of_k-free}
\lim_{N\to\infty}\frac{1}{N}\hspace{-.6 em}\sum_{1\leq n\leq N\atop n\,\mathrm{is}\, k\text{-free}} \hspace{-.6 em} f\big(T^{\Omega(n)}x\big)
~=~\frac{1}{\zeta(k)} \left(\int f\d\mu\right),
\end{equation}
where $\zeta$ is \define{Riemann's zeta function} and a \define{$k$-free integer} is an integer that is not divisible by a $k$-th power. 
This is a special case of a more general result presented in the next subsection, see \cref{cor_ortho_Omega_Besicovitch_ap_fctns}.

Another classical number-theoretic function, akin to $\Omega(n)$, is  $\omega\colon\N\to\N\cup\{0\}$, which counts the number of prime factors without multiplicity. 
Results concerning $\Omega(n)$ often possess a counterpart where $\Omega(n)$ is replaced by $\omega(n)$. The next result shows that \cref{thm_dynamical_MVT_Omega} is no exception to this rule.

\begin{Corollary}
\label{cor_dynamical_MVT_omega}
Let $(X,\mu,T)$ be uniquely ergodic.
Then
\begin{equation}
\label{eqn_ud_of_omega}
\lim_{N\to\infty}\frac{1}{N}\sum_{n=1}^N f\big(T^{\omega(n)}x\big)
~=~\int f\d\mu
\end{equation}
for every $f\in\Cont(X)$ and every $x\in X$.
\end{Corollary}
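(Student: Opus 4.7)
The plan is to reduce \cref{cor_dynamical_MVT_omega} to \cref{thm_dynamical_MVT_Omega}, the bridge being the fact that $\omega(n)=\Omega(n)$ whenever $n$ is squarefree. Every $n\in\N$ admits a unique factorization $n=ab$ in which $b$ is squarefree, $a$ is \emph{squarefull} (meaning $p\mid a\Rightarrow p^{2}\mid a$, with $a=1$ counted as squarefull), and $\gcd(a,b)=1$. Under this factorization
\[
\omega(n) \,=\, \omega(a)+\omega(b) \,=\, \omega(a)+\Omega(b),
\]
so $T^{\omega(n)}x = T^{\Omega(b)}y_{a}$ where $y_{a}\coloneqq T^{\omega(a)}x$. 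Grouping $n\leq N$ according to its squarefull part $a$ thus expresses the mean $\frac{1}{N}\sum_{n\leq N}f(T^{\omega(n)}x)$ as a weighted sum over $a$ of inner averages of $f(T^{\Omega(b)}y_{a})$ over $b\leq N/a$ that are squarefree and coprime to~$a$.

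The workhorse is the coprime-restricted mean value
\begin{equation}
\label{eqn_plan_coprime_sqfree}
\lim_{N\to\infty}\frac{1}{N}\sum_{\substack{b\leq N\\ b\text{ sqfree},\ \gcd(b,a)=1}} f\!\left(T^{\Omega(b)}y\right) \,=\, \frac{6}{\pi^{2}}\prod_{p\mid a}\frac{p}{p+1}\,\int f\d\mu,
\end{equation}
valid for every squarefull $a$ and every $y\in X$. Granting \eqref{eqn_plan_coprime_sqfree}, one may interchange the $a$-summation with the $N$-limit (justified by $\sum_{a\text{ sqfull}}1/a<\infty$ together with the uniform bound $\|f\|_{\infty}$) to obtain
\[
\lim_{N\to\infty}\frac{1}{N}\sum_{n=1}^{N}f\bigl(T^{\omega(n)}x\bigr) \,=\, \left(\int f\d\mu\right) \sum_{a\text{ sqfull}}\frac{1}{a}\cdot\frac{6}{\pi^{2}}\prod_{p\mid a}\frac{p}{p+1}.
\]
A short Euler-product computation then collapses each local factor to $1+\frac{1}{p^{2}-1}=\frac{p^{2}}{p^{2}-1}$, so the total constant equals $\frac{6}{\pi^{2}}\prod_{p}\frac{p^{2}}{p^{2}-1}=1$, and the corollary follows.

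The only real work is \eqref{eqn_plan_coprime_sqfree}, which I would establish from \cref{thm_dynamical_MVT_Omega} by two layers of M\"obius inversion. Writing $\mathbf{1}\{b\text{ sqfree}\}=\sum_{d^{2}\mid b}\mu(d)$ and substituting $b=d^{2}m$ reduces the claim to the inner averages $\frac{1}{N/d^{2}}\sum_{m\leq N/d^{2},\,\gcd(m,a)=1} f(T^{\Omega(m)}T^{2\Omega(d)}y)$ (subject to $\gcd(d,a)=1$, else the term vanishes); expanding the coprimality by $\mathbf{1}\{\gcd(m,a)=1\}=\sum_{e\mid\mathrm{rad}(a)}\mu(e)\mathbf{1}\{e\mid m\}$ and substituting $m=ek$ then reduces each inner average to a finite linear combination of the unrestricted averages governed by \cref{thm_dynamical_MVT_Omega}. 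The resulting Euler products (over $d$ coprime to $a$ and over divisors $e$ of $\mathrm{rad}(a)$) multiply exactly to the constant $\frac{6}{\pi^{2}}\prod_{p\mid a}\frac{p}{p+1}$ displayed in \eqref{eqn_plan_coprime_sqfree}. The main technical point is the uniform $d$-tail estimate needed to interchange the outer M\"obius sum with the $N$-limit; this is controlled by the crude bound $\|f\|_{\infty}\sum_{d>D}1/d^{2}\to 0$ as $D\to\infty$.
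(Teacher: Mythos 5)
Your proposal is correct, and its skeleton is the paper's: strip the square part off $n$ so that $\omega$ turns into $\Omega$ of a squarefree number, feed the inner averages into \cref{thm_dynamical_MVT_Omega} via a squarefree-restricted mean value, and let an Euler product collapse to $1$. The differences are in the decomposition and the key lemma. The paper writes $n=d^2m$ with $m$ squarefree (not necessarily coprime to $d$) and quotes \cref{cor_dynamical_MVT_squarefree} directly, so its proof is a few lines with weights $1/d^2$ and constant $\zeta(2)\cdot\tfrac{6}{\pi^2}=1$; you use the coprime squarefull--squarefree factorization $n=ab$, which obliges you to prove the coprimality-restricted statement \eqref{eqn_plan_coprime_sqfree} by a second M\"obius inversion, to track the local factors $\tfrac{6}{\pi^2}\prod_{p\mid a}\tfrac{p}{p+1}$, and to justify a dominated-convergence interchange over squarefull $a$ (all of which you do correctly; the tail bounds you call routine really are routine). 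What the extra work buys you is exactness of the pointwise identity: $\omega(n)=\omega(a)+\Omega(b)$ holds term by term, whereas the identity displayed in the paper's proof, $f(T^{\omega(n)}x)=\sum_{d\mid n}\1_{\square\text{-free}}(n/d)\,\1_\square(d)\,f\big(T^{\Omega(n/d)}x\big)$, is not literally true (for $n=12$ the only surviving term is $d=4$ with $\Omega(3)=1$, while $\omega(12)=2$); the paper's conclusion is unharmed because the inner limit $\tfrac{6}{\pi^2}\int f\d\mu$ from \cref{cor_dynamical_MVT_squarefree} does not depend on the base point, so the missing shift $T^{\omega(a)}$ is invisible in the limit, but your bookkeeping avoids the slip at the cost of a longer Euler-product computation.
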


\begin{Remark}
Variants of Corollaries \ref{cor_polynomoial_ud_Omega}, \ref{cor_gneralized_polynomoial_ud_Omega}, and \ref{cor_odious_evil_Omega} where $\Omega(n)$ is replaced by $\omega(n)$ also hold and can be derived from \cref{cor_dynamical_MVT_omega} similarly to the way Corollaries \ref{cor_polynomoial_ud_Omega}, \ref{cor_gneralized_polynomoial_ud_Omega}, and \ref{cor_odious_evil_Omega} are derived from \cref{thm_dynamical_MVT_Omega}.
\end{Remark}

Our next main result,  \cref{thm_dynamical_MVT_fg_sue} below, shows that \cref{thm_dynamical_MVT_Omega} is merely a special case of a more general dynamical phenomenon involving actions of the multiplicative semigroup $(\N,\cdot)$.
To illustrate the connection between \cref{thm_dynamical_MVT_Omega} and actions of $(\N,\cdot)$, let us take a closer look at the expression $T^{\Omega(n)}$.
The function $\Omega\colon\N\to\N\cup\{0\}$ is \define{completely additive}, meaning that $\Omega(n_1n_2)=\Omega(n_1)+\Omega(n_2)$ for all $n_1,n_2\in\N$.
For that reason, $\Omega$ turns any action of $(\N,+)$ into an action of $(\N,\cdot)$:
\begin{equation}
\label{eqn_multipliactive_action}
T^{\Omega(nm)}=T^{\Omega(n)+\Omega(m)}=T^{\Omega(n)}\circ T^{\Omega(m)},\qquad\forall n,m\in\N.
\end{equation}
Thus, given an additive topological dynamical system $(X,T)$, we can view $X$ equipped with the multiplicative action induced by $(T^{\Omega(n)})_{n\in\N}$ as a new ``multiplciative'' dynamical system.
This motivates the following definition.

\begin{Definition}
A \define{multiplicative topological dynamical system} is a pair $(Y,S)$ where $Y$ is a compact metric space and $S=(S_n)_{n\in\N}$ is an action of $(\N,\cdot)$ by continuous maps on $Y$ (i.e.\ $ S_{nm}=S_n\circ S_m$ for all $m,n\in\N$).
\end{Definition}

\begin{Remark}
\label{ex_2}
The following are two natural approaches for constructing examples of multiplicative topological dynamical systems.
\begin{enumerate}	
[label=(\roman{enumi}),ref=(\roman{enumi}),leftmargin=*]

\item
\label{remark_itm_1}
The first approach utilizes the additivity of $\Omega(n)$ to turn actions of $(\N,+)$ into actions of $(\N,\cdot)$ as mentioned above. 
Indeed, it follows from \eqref{eqn_multipliactive_action} that 
for any additive topological dynamical system $(X,T)$ the pair $(X, T^\Omega)$ is a multiplicative topological dynamical system, where we use $T^\Omega$ to denote $(T^{\Omega(n)})_{n\in\N}$.
Since many dynamical properties of $(X,T)$ are inherited\footnote{It is straightforward to check that if $(X,T)$ is an \define{equicontinuous} additive topological dynamical system then $(X,T^\Omega)$ is an \define{equicontinuous} multiplicative topological dynamical system. Analogous statements hold when `equicontinuous' is replaced with either \define{minimal}, \define{transitive}, \define{distal}, \define{topologically weak mixing}, or \define{uniquely ergodic} (see \cite[pp.\ 14, 18, and 23]{Glasner03} for definitions). We remark that entropy is not preserved via this construction. More precisely, $(X,T^\Omega)$ has zero topological entropy regardless of the topological entropy of $(X,T)$, see \cref{prop_fg_implies_zero_entropy} below.} by $(X,T^\Omega)$, this construction yields a diverse class of systems with a wide range of different behaviors.
For the special case when $(X,T)$ is a rotation on two points, i.e.\ $X=\{0,1\}$ and $T(x)={x+1}\pmod{2}$, we call the corresponding multiplicative system $(X,T^\Omega)$  \define{multiplicative rotation on two points}. Although not mentioned explicitly, this system played a central role in our derivation of the Prime Number Theorem from \cref{thm_dynamical_MVT_Omega}.

\item
\label{remark_itm_2}
Another way of constructing examples of multiplicative topological dynamical systems is with the help of completely multiplicative functions. A function $b\colon\N\to \C$ is called \define{multiplicative} if $b(n_1n_2)=b(n_1)b(n_2)$ for all coprime $n_1,n_2\in\N$, and it is called \define{completely multiplicative} if $b(n_1n_2)=b(n_1)b(n_2)$ for all $n_1,n_2\in\N$.
Any completely multiplicative function $b$ taking values in the unit circle $ S^1\coloneqq \{z\in\C: |z|=1\}$ induces a natural action $S=(S_n)_{n\in\N}$ of $(\N,\cdot)$ on $ S^1$ via $S_n(z)=b(n)z$ for all $n\in\N$ and $z\in  S^1$. Let $Y$ denote the closure of the image of $b$ in $S^1$. We call the multiplicative topological dynamical system $(Y, S)$ \define{multiplicative rotation by $b$}. We remark that multiplicative rotation by the Liouville function $\lio$ and multiplicative rotation on two points (as defined in the previous paragraph) are isomorphic as topological dynamical systems. 
\end{enumerate}
\end{Remark}

A version of the Bogolyubov-Krylov theorem for actions of $(\N,\cdot)$ implies that every multiplicative topological dynamical system $(Y,S)$ possesses an $S$-invariant Borel probability measure $\nu$. If this measure is unique then $(Y,S)$ is called \define{uniquely ergodic}.
Motivated by \cref{thm_dynamical_MVT_Omega}, it is tempting to conjecture that for any uniquely ergodic multiplicative topological dynamical system $(Y,\nu,S)$ one has
\begin{equation}
\label{eqn_ud_of_mtds}
\lim_{N\to\infty}~\frac{1}{N}\sum_{n=1}^N g(S_n y)~=~\int g\d\nu
\end{equation}
for all $y\in Y$ and all $g\in\Cont(Y)$.
In general, this is false (see\ \cref{example_1} below). 
However, we will show that \eqref{eqn_ud_of_mtds} holds for a large class of multiplicative topological dynamical systems which contains systems of the form $(X,T^\Omega)$ as a rather special subclass.

\begin{Definition}
\label{def_fg_sue}
Let $(Y,S)$ be a multiplicative topological dynamical system. 
\begin{itemize}
\item
The action $S$ on $Y$ is called \define{finitely generated} if there exists a finite collection of continuous maps $R_1,\ldots,R_d\colon Y\to Y$ such that $S_p\in\{R_1,\ldots,R_d\}$ for all primes $p\in \P\coloneqq\{2,3,5,7,11,\ldots\}$. In this case we call $R_1,\ldots,R_d$ the \define{generators} of $S$.
\item
Let $\nu$ be a Borel probability measure on $Y$. 
Appropriating language from \cite{GSdraft}, we say that $\nu$ \define{pretends to be invariant under $S$} if there exists a set of primes $P\subset \P$ with $\sum_{p\notin P} 1/p<\infty$ such that $\nu$ is invariant under $S_p$ for all $p\in P$.
We call $(Y,S)$ \define{strongly uniquely ergodic} if there is only one Borel probability measure on $Y$ that pretends to be invariant under $S$. 
\end{itemize}
\end{Definition}

Note that strong unique ergodicity implies unique ergodicity, but not the other way around (see \cref{example_1}).

\begin{Maintheorem}
\label{thm_dynamical_MVT_fg_sue}
Let $(Y,\nu,S)$ be finitely generated and strongly uniquely ergodic. Then
\begin{equation*}
\lim_{N\to\infty}~\frac{1}{N}\sum_{n=1}^N g(S_n y)~=~\int g\d\nu
\end{equation*}
for every $y\in Y$ and $g\in\Cont(Y)$.
\end{Maintheorem}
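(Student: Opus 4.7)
The plan is to show that every weak-$*$ limit $\tilde{\nu}$ of the empirical measures $\frac{1}{N}\sum_{n=1}^N \delta_{S_n y}$ coincides with $\nu$. Since the space of Borel probability measures on $Y$ is weak-$*$ compact, uniqueness of such subsequential limits is equivalent to the convergence $\frac{1}{N}\sum_{n=1}^N g(S_n y) \to \int g\d\nu$ for every $g \in \Cont(Y)$. By the hypothesis of strong unique ergodicity, it therefore suffices to verify that every such $\tilde{\nu}$ pretends to be $S$-invariant in the sense of \cref{def_fg_sue}.

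\textbf{Reduction via finite generation.} Let $R_1,\ldots,R_d$ be the generators of $S$, and partition $\P = P_1 \sqcup \cdots \sqcup P_d$ with $P_i = \{p \in \P : S_p = R_i\}$. For each index $i$ with $\sum_{p \in P_i} 1/p < \infty$, the class $P_i$ may be absorbed into the exceptional set of primes allowed by the definition of pretended invariance, so it suffices to prove that $\tilde{\nu}$ is $R_i$-invariant whenever $\sum_{p \in P_i} 1/p = \infty$. Granting this, I set $P = \bigcup \{P_i : \tilde{\nu} \text{ is } R_i\text{-invariant}\}$; then $\tilde{\nu}$ is $S_p$-invariant for every $p \in P$, and $\sum_{p \notin P} 1/p < \infty$ as a finite union of convergent prime-reciprocal sums. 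Strong unique ergodicity then forces $\tilde{\nu} = \nu$.

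\textbf{The crux and main obstacle.} The heart of the argument is the assertion: for every $g \in \Cont(Y)$ and every $i$ with $\sum_{p \in P_i} 1/p = \infty$,
$$\lim_{N \to \infty} \Big(\frac{1}{N}\sum_{n=1}^N g(R_i S_n y) - \frac{1}{N}\sum_{n=1}^N g(S_n y)\Big) = 0.$$
Using $R_i = S_p$ for every $p \in P_i$ together with the multiplicativity $S_{pn} = S_p S_n$, the first average rewrites as $\frac{1}{N}\sum_{n=1}^N g(S_{pn}y)$, which (up to a boundary correction of size $O(1/p)$) is the average of $g(S_m y)$ along multiples of $p$ in $[1,pN]$. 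The approach is to average these identities over $p \in P_i$ with $p \leq H$, weighted by $1/p$, and exploit that the weights diverge as $H \to \infty$ while each contribution is uniformly bounded by $2\|g\|_\infty$. Matters then reduce to a \Turan--Kubilius-style decomposition of $\{1,\ldots,N\}$ according to divisibility by primes in $P_i$, and the pairwise commutativity of $R_1,\ldots,R_d$ (a consequence of $S_{nm} = S_{mn}$) ensures that the compositions appearing in such a decomposition can be reassembled consistently. I expect the main technical obstacle to lie precisely here: adapting the key combinatorial lemma that powers the proof of \cref{thm_dynamical_MVT_Omega} so that it extracts sharp cancellation from only a \emph{subset} $P_i$ of primes of divergent reciprocal mass, rather than from $\P$ as a whole.
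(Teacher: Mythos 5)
Your soft reduction coincides with the paper's: pass to weak-$*$ limit points of the empirical measures, absorb every class $P_i$ with $\sum_{p\in P_i}1/p<\infty$ into the exceptional set permitted by \cref{def_fg_sue}, and reduce everything to showing that $\frac{1}{N}\sum_{n\le N}\bigl(g(R_iS_ny)-g(S_ny)\bigr)\to 0$ whenever $\sum_{p\in P_i}1/p=\infty$. The gap is in the crux, and the obstacle you flag is not where the difficulty actually lies. Restricting the Tur\'an--Kubilius/coprimality machinery to the subfamily $P_i$ is immediate: \cref{prop_coprimality_criterion} together with \cref{lem_coprimaility_measurement_primes} applies to any finite $B\subset P_i$ with $\sum_{p\in B}1/p\ge 1/\epsilon$, and such $B$ exist precisely because $\sum_{p\in P_i}1/p=\infty$. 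What this step yields, however, is
\[
\BEu{n\in[N]} g(S_n y)\;\approx\;\BEul{p\in B}\,\BEu{n\in[\nicefrac{N}{p}]} g(R_i S_n y),
\]
that is, averages of the sequence $n\mapsto g(R_iS_ny)$ at the scales $N/p$, not at scale $N$. Ces\`aro averages of a bounded sequence at scale $N/p$ need bear no relation to those at scale $N$, so ``averaging the identities over $p\le H$ with weights $1/p$ and exploiting divergence of the weights'' cannot close the argument; this scale mismatch is the heart of the matter, and your sketch contains no mechanism for it.

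In \cref{thm_dynamical_MVT_Omega} the paper resolves the mismatch by running two decompositions simultaneously, one over primes and one over $2$-almost primes having the same cardinality in every $\rho$-adic interval (\cref{lem_coprimaility_measures_1,lem_hilfslemma_1}), so that both sides reduce to the \emph{same} twisted averages at matching scales. The direct analogue here would require matching primes of $P_i$ against semiprimes from $P_i\cdot P_i$ interval by interval, and nothing guarantees this is possible: $P_i$ may contain essentially no primes at the scales where products of two of its elements live. This is exactly why the paper proves instead the idempotency statement $\frac{1}{N}\sum_{n\le N}\bigl(g(R_iS_ny)-g(R_i^2S_ny)\bigr)\to 0$ (\cref{prop_idempotency}), whose proof in \cref{sec_aux_thm_B} has to compare sets drawn from \emph{different} generator classes, chained together through the relations $\sim$ and $\approx$ on indices, DSR sets and partition regularity (\cref{lem_coprimaility_measures_2,lem_coprimaility_measures_3}). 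Commutativity of the generators plus a subset Tur\'an--Kubilius inequality is not enough, so as it stands the central analytic step of your proposal is missing rather than merely technical.
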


Observe that for any uniquely ergodic additive topological dynamical system $(X,T)$ the corresponding multiplicative topological dynamical system $(X, T^\Omega)$ is both finitely generated and strongly uniquely ergodic. Therefore, \cref{thm_dynamical_MVT_fg_sue} contains \cref{thm_dynamical_MVT_Omega} as a special case.

\begin{Remark}
\label{rem_proof_B_gen}
If the generators $R_1,\ldots,R_d$ of the multiplicative action $S=(S_n)_{n\in\N}$ are powers of the same transformation, as for example is the case in \cref{cor_dynamical_MVT_additive_functions} below, then \cref{thm_dynamical_MVT_fg_sue} can also be proved by purely number theoretic methods using \Erdos{}-type estimates obtained by \Halasz{} in \cite{Halasz71}. This is in analogy the alternative proof of \cref{thm_dynamical_MVT_Omega} outlined in \cref{rem_proof_A_gen} above. However, in the general case when there is no apparent relation between the generators $R_1,\ldots,R_d$, we do not see how existing results from number theory can be used to give a proof of \cref{thm_dynamical_MVT_fg_sue} different to ours.
\end{Remark}

\begin{Remark}
It is natural to ask whether \cref{thm_dynamical_MVT_fg_sue} can be extended to 
multiplicative systems that are not finitely generated. In general, the answer is no. For example, the non-finitely generated multiplicative system $(Y,\nu,S)$ where $Y=\T=\R/\Z$, $\nu$ is the Haar measure on $\T$, and $S_n(x)={x+\log(n)}\pmod{1}$, does not satisfy the conclusion of \cref{thm_dynamical_MVT_fg_sue}, because $(\log(n))_{n\in\N}$ is not uniformly distributed mod~$1$ (see \cite[p.~8, Example 2.4]{KN74}). On the other hand, it is not hard to see that there are some non-finitely generated systems that do satisfy the conclusion of \cref{thm_dynamical_MVT_fg_sue}. For example, if $b\colon\N\to S^1$ is a non-finitely generated completely multiplicative function such that $(b(n))_{n\in\N}$ is uniformly distributed in the unit circle $S^1$, then the corresponding multiplicative rotation by $b$ (as described in \cref{ex_2}, part~\ref{remark_itm_2}) is a non-finitely generated multiplicative system with the desired property. A concrete example of such a function is the one uniquely determined by
\[
b(p)=
\begin{cases}
e(\alpha), &\text{if}~p\equiv 1\mod 4,
\\
e(1/p), &\text{otherwise},
\end{cases}
\]
where $p$ ranges over all prime numbers and $\alpha$ is some fixed irrational number.
At the end of this section we pose two questions (see Questions~\ref{question_1} and~\ref{question_2}) which further address this subject matter.   
\end{Remark}

We will discuss now some applications of \cref{thm_dynamical_MVT_fg_sue}. 
We call a multiplicative function $b\colon\N\to\C$ \define{finitely generated} if the set $\{b(p): p\in\P\}$ is finite.
Also, we say a multiplicative function $b$ has a \define{mean value} if its \Cesaro{} averages converge, i.e.,
$$
M(b)\coloneqq \lim_{N\to\infty}\,\frac{1}{N}\sum_{n=1}^N b(n) ~~\text{exists.}
$$
A renowned result of Wirsing \cite{Wirsing61, Wirsing67}, which confirmed a longstanding conjecture (see\ \cite[Problem 9 on pp.\ 293--294]{Erdos57}), states that any multiplicative function taking only the values $-1$ and $1$ has a mean value (cf.\ also \cite{Hildebrand86c}).
This result is often referred to as Wirsing's mean value theorem. 
A natural extension thereof, which follows from the more general mean value theorem of \Halasz{} \cite{Halasz68}, asserts that actually any bounded and finitely generated multiplicative function has a mean value.
This result can be recovered from \cref{thm_dynamical_MVT_fg_sue} by considering multiplicative rotations $z\mapsto b(n)z$ defined in \cref{ex_2}, part \ref{remark_itm_2}. Although the derivation is not too complicated, we defer the details to \cref{sec_applications_of_thm_B} (see \cref{prop_fg_mult_fctn}).

Another application of \cref{thm_dynamical_MVT_fg_sue} is the following generalization of \cref{thm_dynamical_MVT_Omega} along arithmetic progressions:

\begin{Corollary}
\label{cor_ortho_Omega_rational_phases}
Let $(X,\mu,T)$ be uniquely ergodic.
Then
\begin{equation}
\lim_{N\to\infty}\frac{1}{N}\sum_{n=1}^N f\big(T^{\Omega(mn+r)}x\big)
~=~\int f\d\mu
\end{equation}
for every $x\in X$, $f\in\Cont(X)$, $m\in\N$, and $r\in\{0,1,\ldots,m-1\}$.
\end{Corollary}

Note that \cref{cor_ortho_Omega_rational_phases} contains the Prime Number Theorem along arithmetic progressions as a special case. Indeed, by applying \cref{cor_ortho_Omega_rational_phases} to rotation on two points we obtain
\begin{equation}
\label{eqn_DPNT_lio}
\lim_{N\to\infty}~\frac{1}{N}\sum_{n=1}^N \lio(mn+r)\,=\,0,\qquad\forall m\in\N,~\forall  r\in\{0,1,\ldots,m-1\},
\end{equation}
which is a well-known equivalent form of the Prime Number Theorem along arithmetic progressions
(see \cite{Shapiro49}). 

\begin{Remark}
While \cref{cor_ortho_Omega_rational_phases} implies \eqref{eqn_DPNT_lio}, our proof is not entirely self-contained because it uses the fact that
\begin{equation}
\label{eqn_dirichlet_weak}
\sum_{ p\equiv{r}\bmod{m}}\frac{1}{p}\,=\,\infty
\end{equation}
for all $m,r\in\N$ with $\gcd(m,r)=1$, which is a variant of Dirichlet's celebrated theorem on primes in arithmetic progression.
It would be interesting to see whether \cref{cor_ortho_Omega_rational_phases} can be proved without relying on \eqref{eqn_dirichlet_weak}. 
\end{Remark}

In view of \cref{thm_dynamical_MVT_Omega} and \cref{cor_dynamical_MVT_omega}, it is natural to ask whether there are other number-theoretic functions, besides $\Omega(n)$ and $\omega(n)$, along which one can formulate and prove an ergodic theorem.
The following corollary of \cref{thm_dynamical_MVT_fg_sue} shows that one can replace $\Omega(n)$ in \eqref{eqn_ud_of_Omega_1} with a wider range of additive functions.

\begin{Corollary}
\label{cor_dynamical_MVT_additive_functions}
Let $a\colon\N\to\N\cup\{0\}$ be an additive function (i.e.\ $a(nm)=a(n)+a(m)$ for all coprime $n,m\in\N$) and assume the set $\{a(p): p~\text{prime}\}$ is finite.
Let $P_0\coloneqq \{p\in\P: a(p)\neq 0\}$ and suppose $\sum_{p\in P_0}1/p=\infty$. If $(X,\mu,T^{a(p)})$ is uniquely ergodic for all $p\in P_0$ then
\begin{equation}
\label{eqn_ud_additive_functions_2}
\lim_{N\to\infty}\frac{1}{N}\sum_{n=1}^N f\big(T^{a(n)}x\big)
~=~\int f\d\mu
\end{equation}
for every $f\in\Cont(X)$ and every $x\in X$.
\end{Corollary}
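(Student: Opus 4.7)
The plan is to package the sequence $T^{a(n)}$ as a multiplicative topological dynamical system and then invoke \cref{thm_dynamical_MVT_fg_sue}. Define $S_n\colon X\to X$ by $S_n\coloneqq T^{a(n)}$ for each $n\in\N$. Complete additivity of $a$ gives $S_{nm}=T^{a(n)+a(m)}=S_n\circ S_m$, so $S=(S_n)_{n\in\N}$ is a continuous action of $(\N,\cdot)$ and $(X,S)$ is a multiplicative topological dynamical system. The corollary then becomes an assertion about the orbit averages of $(X,S)$ from the base point $x$, which is exactly what \cref{thm_dynamical_MVT_fg_sue} delivers once its two structural hypotheses have been checked.

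The first hypothesis, \emph{finite generation}, is immediate from the assumption that $\{a(p):p\in\P\}$ is finite: only finitely many distinct maps appear among the $S_p=T^{a(p)}$, $p\in\P$, and these may serve as the generators $R_1,\ldots,R_d$ of $S$. The key step is the verification of \emph{strong unique ergodicity} with $\mu$ as the unique measure pretending to be $S$-invariant. Since $\mu$ is $T$-invariant, it is $T^k$-invariant for every $k\in\N$, hence $S_n$-invariant for every $n$; in particular $\mu$ trivially pretends to be invariant under $S$ (take $P=\P$). Conversely, suppose $\nu$ is any Borel probability measure on $X$ that pretends to be $S$-invariant, so there exists $P\subset\P$ with $\sum_{p\notin P}1/p<\infty$ such that $\nu$ is $S_p$-invariant for every $p\in P$. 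Here the role of the divergence hypothesis enters decisively: from
$$\sum_{p\in P_0\setminus P}\frac{1}{p}\leq \sum_{p\notin P}\frac{1}{p}<\infty\quad\text{and}\quad\sum_{p\in P_0}\frac{1}{p}=\infty,$$
we conclude that $P\cap P_0$ is non-empty; picking $p_0\in P\cap P_0$, the measure $\nu$ is $T^{a(p_0)}$-invariant with $a(p_0)\neq 0$, and the hypothesis that $(X,\mu,T^{a(p_0)})$ is uniquely ergodic forces $\nu=\mu$.

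With both hypotheses in hand, \cref{thm_dynamical_MVT_fg_sue} applied to $(X,\mu,S)$ yields
$$\lim_{N\to\infty}\frac{1}{N}\sum_{n=1}^N f(S_n x)=\int f\d\mu$$
for every $x\in X$ and $f\in\Cont(X)$, which is precisely \eqref{eqn_ud_additive_functions_2}. The only conceptually delicate point is the strong unique ergodicity verification; there is no genuine obstacle, however, because the divergence condition $\sum_{p\in P_0}1/p=\infty$ is tailored so that any measure pretending to be $S$-invariant must inherit invariance from at least one non-trivial generator $T^{a(p_0)}$, at which moment unique ergodicity of that generator pins the measure down to $\mu$.
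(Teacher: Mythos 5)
Your proposal is correct and follows essentially the same route as the paper: form the multiplicative system $S_n=T^{a(n)}$, note it is finitely generated because $\{a(p):p\in\P\}$ is finite, verify strong unique ergodicity from $\sum_{p\in P_0}1/p=\infty$ together with unique ergodicity of each $T^{a(p)}$, and apply \cref{thm_dynamical_MVT_fg_sue}. Your explicit verification that any measure pretending to be $S$-invariant must be invariant under some $T^{a(p_0)}$ with $p_0\in P\cap P_0$ is exactly the detail the paper leaves implicit, and it is carried out correctly.
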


Among other things, \cref{cor_dynamical_MVT_additive_functions} implies yet another classical result of Wirsing. Let $\mathcal{Q}\subset \P$ be a set of primes with relative density $\tau>0$, i.e.\
$
|\mathcal{Q}\cap[1,n]| \sim {\tau n}/{\log(n)}.
$
Let $\Omega_\mathcal{Q}(n)$ denote the number of prime factors of $n$ that belong to $\mathcal{Q}$ (counted with multiplicities), and consider the following variant of the Liouville function:
\[
\lio_{\mathcal{Q}}(n)\, =\, (-1)^{\Omega_{\mathcal{Q}}(n)},\qquad\forall n\in\N.
\]
It was shown in \cite[\S\,2.4.2]{Wirsing61} that
\begin{equation}
\label{eqn_wirsings_lio_PNT}
\lim_{N\to\infty}\frac{1}{N}\sum_{n=1}^N \lio_{\mathcal{Q}}(n) \,=\, 0.
\end{equation}
It follows from \cite{Halasz68} that the condition $|\mathcal{Q}\cap[1,n]| \sim {\tau n}/{\log(n)}$ for some $\tau>0$ that appears in Wirsing's work can actually be weakened to $\sum_{p\in\mathcal{Q}}1/p=\infty$. Equation \eqref{eqn_wirsings_lio_PNT}, under this weaker condition, can rather easily be derived from \cref{cor_dynamical_MVT_additive_functions}. 
Indeed, if $a(n)=\Omega_{\mathcal{Q}}(n)$, $(X,T)$ is rotation on two points, $x=0$, and $f\colon \{0,1\}\to \C$ is the function $f(0)=1$ and $f(1)=-1$, then \eqref{eqn_wirsings_lio_PNT} follows immediately from \eqref{eqn_ud_additive_functions_2}.
It is also straightforward to derive from \cref{cor_dynamical_MVT_additive_functions} analogues of Corollaries \ref{cor_polynomoial_ud_Omega}, \ref{cor_gneralized_polynomoial_ud_Omega}, \ref{cor_odious_evil_Omega}, and \ref{cor_dynamical_MVT_squarefree} with $\Omega(n)$ replaced by either $\Omega_{\mathcal{Q}}(n)$ or $\omega_{\mathcal{Q}}(n)$, where $\omega_{\mathcal{Q}}(n)$ is the number of prime factors of $n$ belonging to $\mathcal{Q}$ counted without multiplicity.

\cref{thm_dynamical_MVT_fg_sue} also implies a variant of \cref{cor_dynamical_MVT_additive_functions} for several commuting transformations.

\begin{Corollary}
\label{cor_dynamical_MVT_several_additive_functions}
Let $a_1,\ldots,a_k\colon\N\to\N\cup\{0\}$ be completely additive functions such that
\begin{enumerate}	
[label=(\roman{enumi}),ref=(\roman{enumi}),leftmargin=*]
\item
\label{itm_several_add_fctns_i}
$\{a_i(p): p~\text{prime}\}$ is finite for each $i\in\{1,\ldots,k\}$;
\item
\label{itm_several_add_fctns_ii}
for any subgroup $\Gamma\subset \Z^k$ with $\mathrm{rank}(\Gamma)< k$ the set $$P_{\Gamma}=\{p\in\P: (a_1(p),\ldots,a_k(p))\notin\Gamma\}$$ satisfies $\sum_{p\in P_\Gamma} 1/p=\infty$.
\end{enumerate}
Then for any totally uniquely ergodic\footnote{Let $T_1,\ldots,T_k$ be commuting continuous maps on a compact metric space $X$. We call $(X,\mu,T_1,\ldots,T_k)$ totally uniquely ergodic if for all $m_1,\ldots,m_k\in\N$ the measure $\mu$ is the only Borel probability measure on $X$ that is invariant under $T_{i}^{m_i}$ for all $i=1,\ldots,k$.} $(X,\mu,T_1,\ldots,T_k)$ one has
\begin{equation}
\lim_{N\to\infty}\frac{1}{N}\sum_{n=1}^N f\big(T_1^{a_1(n)}\cdots T_k^{a_k(n)} x\big)
~=~\int f\d\mu
\end{equation}
for every $f\in\Cont(X)$ and $x\in X$.
\end{Corollary}

\begin{Example}
Let $\mathcal{Q}_1=\{p\in\P: p\equiv 1\mod 4\}$, $\mathcal{Q}_2=\{p\in\P: p\equiv 3\mod 4\}$, $a_1(n)=\Omega_{\mathcal{Q}_1}(n)$, and $a_2(n)=\Omega_{\mathcal{Q}_2}(n)$. Let $X=\T^3$, $\alpha\in\R\setminus\Q$, $T_1(x,y,z)=(x+\alpha, y,z+y)$, and $T_2(x,y,z)=(x,y+\alpha,z+x)$.
It is not hard to check that the system $(X,T_1,T_2)$ is totally uniquely ergodic with respect to the normalized Haar measure on $\T^3$. Also, note that $T_1^nT_2^m(x,y,z)=(x+n\alpha, y+m\alpha,z+mx+ny+nm\alpha)$ for all $n,m\in\N$.
Applying \cref{cor_dynamical_MVT_several_additive_functions} for $x=(0,0,0)$ and $f_h(x,y,z)=e(hz)$, $h\in\Z\setminus\{0\}$, we thus obtain
\begin{align}
\label{eqn_quad_Omega_dist_1}
\lim_{N\to\infty}\frac{1}{N}\sum_{n=1}^N f_h\big(T_1^{a_1(n)}T_2^{a_2(n)} x\big) \,=\, \lim_{N\to\infty}\frac{1}{N}\sum_{n=1}^N e(h\Omega_{\mathcal{Q}_1}(n)\Omega_{\mathcal{Q}_2}(n)\alpha)=0.
\end{align}
In view of Weyl's equidistribution criterion, this proves that for any irrational $\alpha$ the sequence $\Omega_{\mathcal{Q}_1}(n)\Omega_{\mathcal{Q}_2}(n)\alpha$, $n\in\N$, is uniformly distributed mod~$1$.
Similarly, one can show that for any irrational $\alpha$ and any disjoint $\mathcal{Q}_1,\ldots,\mathcal{Q}_k\subset\P$ with $\sum_{p\in\mathcal{Q}_i}1/p=\infty$ the sequence $\Omega_{\mathcal{Q}_1}(n)\cdot\ldots\cdot \Omega_{\mathcal{Q}_k}(n)\alpha$, $n\in\N$, is uniformly distributed mod~$1$. We leave the details to the interested reader.
\end{Example}

\subsection{A wider framework for Sarnak's conjecture}
\label{sec_beyond_sarnak}

A rule of thumb in number theory says that the additive and multiplicative structures in the integers behave ``independently'', where the respective notion of independence may vary based on the context.
In this subsection, we explore this phenomenon from a dynamical perspective through an analysis of the interplay between additive and multiplicative semigroup actions.  
To begin with, let us introduce a way of capturing the absence of correlation between arithmetic functions. 

\begin{Definition}
\label{def_indepndence_of_sequences}
We call two bounded arithmetic functions $a,b\colon\N\to\C$ \define{asymptotically uncorrelated} if
\begin{equation}
\label{eq_asymp_indep}
\lim_{N\to\infty}\left(\frac{1}{N}\sum_{n=1}^N a(n)\overline{b(n)}-
\left(\frac{1}{N}\sum_{n=1}^N a(n)\right)
\left(\frac{1}{N}\sum_{n=1}^N \overline{b(n)}\right)\right)=0.
\end{equation}
\end{Definition}

We are interested in instances of \eqref{eq_asymp_indep} where $a(n)$ arises from an additive system and $b(n)$ arises from a multiplicative system. 
\begin{Definition}
\label{def_disjoitness}
Let $(X,T)$ be an additive topological dynamical system and $(Y,S)$ a multiplicative topological dynamical system. We call $(X,T)$ and $(Y,S)$ \define{disjoint}\footnote{Using \define{disjoint} in this context is in line with the use of this term in connection to Sarnark's \Mobius{} disjointness conjecture. It should not be confused with Furstenberg's notion of disjointness introduced in \cite{Furstenberg67}.} if for all $x\in X$, $f\in \Cont(x)$, $y\in Y$, and $g\in\Cont(Y)$ the sequences $a(n)=f(T^n x)$ and $b(n)=g(S_ny)$ are asymptotically uncorrelated.
\end{Definition}

Numerous classical results and conjectures in number theory can be interpreted as the disjointness between certain classes of additive and multiplicative systems. 
For example, a well-known result by Davenport \cite{Davenport37} asserts that
\begin{equation}
\label{eqn_davenport_lio}
\lim_{N\to\infty}\frac{1}{N}\sum_{n=1}^N  e(n\alpha)\, \lio(n)\, =\, 0,\qquad\forall\alpha\in \R,
\end{equation}
where $\lio$ is the Liouville function.
Note that \eqref{eqn_davenport_lio} is equivalent to the assertion that $e(n\alpha)$ and $\lio(n)$ are asymptotically uncorrelated.
This allows us to recast \eqref{eqn_davenport_lio} as a dynamical statement as follows:
\begin{named}{Davenport's Theorem Reformulated}{}
\label{davenport_reformulated}
Multiplicative rotation on two points (defined in \cref{ex_2}, part \ref{remark_itm_1}) is disjoint from all additive rotations $x\mapsto {x+\alpha}~(\text{mod}\, {1})$ on the torus $\T$.
\end{named}

To see why the above statement implies \eqref{eqn_davenport_lio}, it suffices to observe that $e(n\alpha)$ is of the form $f(T^nx)$, where $T$ is rotation by $\alpha$, and the Liouville function $\lio(n)$ is of the form $g(S_n y)$, where $S=(S_n)_{n\in\N}$ is multiplicative rotation on two points.
For the reverse implication, notice that any sequence $a(n)=f(T^nx)$ arising from rotation by $\alpha$ can be approximated uniformly by finite linear combinations of sequences of the form $e(hn\alpha)$, $h\in\Z$, 
whereas any sequence $b(n)= g(S_n y)$ arising from multiplicative rotation on two points is equal to $c_1\lio(n)+c_2$ for some $c_1,c_2\in\C$. Therefore, the desired conclusion that $a(n)=f(T^nx)$ and $b(n)=g(S_n y)$ are  asymptotically uncorrelated follows from \eqref{eqn_davenport_lio}.

Davenport's result is complemented by a theorem of Daboussi (see \cite{Daboussi75,DD74, DD82}), which asserts that for all $\alpha\in\R\setminus\Q$ and all completely multiplicative\footnote{Daboussi's result actually holds for all bounded multiplicative functions and not just for completely multiplicative functions taking values in the unit circle $S^1$. In fact, using standard tools from number theory one can show that these two assertions are equivalent (cf.\ \cref{lem_mult_fctns_transferrence} below).} functions $b\colon\N\to S^1$ one has
\begin{equation}
\label{eqn_daboussi}
\lim_{N\to\infty}\frac{1}{N}\sum_{n=1}^N  e(n\alpha)\, b(n)\, =\, 0.
\end{equation}
Similarly to Davenport's theorem, Daboussi's result can be reformulated as the disjointness between certain classes of additive and multiplicative systems.
\begin{named}{Daboussi's Theorem Reformulated}{}
\label{daboussi_reformulated}
Let $\alpha\in\R\setminus\Q$ and assume $b\colon\N\to S^1$ is completely multiplicative. Then the multiplicative rotation $z\mapsto b(n)z$ on the unit circle $ S^1$ (defined in \cref{ex_2}, part \ref{remark_itm_2}) is disjoint from the additive rotation $x\mapsto {x+\alpha}~(\text{mod}\, {1})$ on the torus $\T$.
\end{named}

It is perhaps instructive to point out that \eqref{eqn_davenport_lio} holds for all $\alpha$, whereas \eqref{eqn_daboussi} holds only for irrational $\alpha$. 
This is because for any rational $\alpha$ there exists a \define{Dirichlet character}\footnote{\label{ftn_dc}An arithmetic function $\chi\colon\N\to\C$ is called a \define{Dirichlet character} if  there exists a number $d\in\N$, called the \define{modulus} of $\chi$, such that
\begin{itemize}
\item
$\chi(n+d)=\chi(n)$ for all $n\in\N$;
\item
$\chi(n)=0$ whenever $\gcd(n,d)\neq 1$, and $\chi(n)$ is a $\tot(d)$-th root of unity if $\gcd(n,d)= 1$, where $\tot$ denotes Euler's totient function.
\item
$\chi(nm)=\chi(n)\chi(m)$ for all $n,m\in\N$.
\end{itemize}
A Dirichlet character $\chi$ is \define{principal} if $\chi(n)=1$ for all $n\in\N$ with $\gcd(n,d)=1$.} $\chi$ such that $e(n\alpha)$ and $\chi$ are not asymptotically uncorrelated (if $\alpha=r/m$ for $r$ and $m$ coprime then taking $\chi$ to be any primitive Dirichlet character of modulus $m$ works, because in this case $\lim_{N\to\infty}\frac{1}{N}\sum_{n=1}^N e(n\alpha)\chi(n)$ equals the Gauss sum $\frac{1}{m}\sum_{j=0}^m e(jr/m)\chi(j)$ which is non-zero).
Phenomena of this type are often referred to as \define{local obstructions}.

A conjecture of Sarnak, which represents a far-reaching dynamical generalization of Davenport's Theorem, emphasizes even more strongly that additive systems are often disjoint from multiplicative systems. 
The formulation of \ref{conj_sarnak} involves the notion of \define{(topological) entropy}.
Entropy is a dynamical invariant which measures the complexity of a dynamical system. We give the precise definition in \cref{sec_entropy}. A function $a\colon\N\to\C$ is called \define{deterministic} if there exists a zero entropy additive topological dynamical system $(X,T)$, a point $x\in X$, and a function $f\in\Cont(X)$ such that $a(n)=f(T^n x)$ for all $n\in\N$.

\begin{named}{Sarnak's Conjecture}{\cite{Sarnak11,Sarnak12}\footnote{Sarnak originally formulated his conjecture using the \Mobius{} function $\mob$ instead of the Liouville function $\lio$. For the equivalence between these two formulations see \cite[Corollary 3.8]{FKL18}.}}
\label{conj_sarnak}
For any deterministic $a\colon\N\to\C$ one has
\begin{equation}
\label{eqn_sarnak_lio}
\lim_{N\to\infty}\frac{1}{N}\sum_{n=1}^N  a(n)\, \lio(n)~=~0.
\end{equation} 
\end{named}

We can reformulate \ref{conj_sarnak} as follows.

\begin{named}{Sarnak's Conjecture Reformulated}{}
\label{conj_sarnak_dynamical}
Multiplicative rotation on two points is disjoint from any zero entropy additive topological dynamical system. 
\end{named}

The following heuristic postulate is an attempt to put forward a principle that, on the one hand, encompasses the above reformulations of  Davenport's and Daboussi's theorems and \ref{conj_sarnak} and, on the other hand, serves as a guide for new developments.

\vspace{-0.8em}
\begin{equation}
  \tag{H}\label{eqn_heuristic}
  \parbox{\dimexpr\linewidth-6em}{%
    \strut
    \emph{
Let $(X,T)$ be a zero entropy additive topological dynamical system and $(Y,S)$ a ``low complexity'' multiplicative topological dynamical system. If there are ``no local obstructions'' then $(X,T)$ and $(Y,S)$ are disjoint.}
    \strut
  }
\end{equation}

The notion of ``low complexity'' which appears in the formulation of \eqref{eqn_heuristic} is admittedly (and somewhat intentionally) not well defined.
While for additive topological dynamical systems the notion of zero topological entropy is just a precise form of low complexity, the situation with multiplicative topological dynamical systems is drastically different due to the fact that $(\N,\cdot)$ has an infinite number of generators. Although it is certainly tempting to try to replace ``low complexity'' in \eqref{eqn_heuristic} with zero entropy, this does not work! For example, consider the $(\N,\cdot)$-action on the torus $\T$ given by $x\mapsto {nx}\pmod{1}$, $n\in\N$.
This action has zero topological entropy, but it violates \eqref{eqn_heuristic} because it can easily be used to generate deterministic sequences such as $e(n\alpha)$, $n\in\N$. 
This example indicates that the notion of low complexity for $(Y,S)$ in \eqref{eqn_heuristic} needs to be more restrictive than just zero entropy. One such possibility, which leads to interesting new developments including 
Theorems \ref{thm_ortho_fg_sue_nilsequences} and \ref{thm_ortho_fg_sue_horocycle} below, is to assume that $(Y,S)$ belongs to a subclass of zero entropy systems which we introduced in the previous subsection under the name \define{finitely generated} (see \cref{def_fg_sue}).
For the proof that finitely generated multiplicative systems have zero entropy see \cref{prop_fg_implies_zero_entropy}.
Yet another non-trivial example of low complexity is provided by actions $S=(S_n)_{n\in\N}$ of $(\N,\cdot)$ for which every generator $S_p$, $p\in\P$, has zero entropy.
Special cases of this option are implemented by our reformulations of Davenport's and Daboussi's theorems and Sarnak's conjecture above. (See also Questions \ref{question_1} and \ref{question_2} at the end of this section.)

As for the stipulation ``no local obstructions'' in \eqref{eqn_heuristic}, we believe it is captured by the notion of \define{aperiodicity} which we will presently introduce.
We call an arithmetic function $P\colon\N\to\C$ \define{periodic} if there exists $m\in\N$ such that $P(n+m)=P(n)$ holds for all $n\in\N$. 

\begin{Definition}
\label{def_apeiodic_systems}
\
\begin{itemize}
\item We call an additive topological dynamical system $(X,T)$ \define{aperiodic} if for any periodic $P\colon\N\to\C$, any $f\in\Cont(X)$, and any $x\in X$ the sequences $P(n)$ and $a(n)=f(T^nx)$ are asymptotically uncorrelated. Equivalently, for all $f\in\Cont(X)$ and $x\in X$, if $a_N(n)\coloneqq f(T^nx)-\frac{1}{N}\sum_{m=1}^N f(T^mx)$, then
\begin{equation}
\label{eqn_additive_aperiodicity}
\lim_{N\to\infty}\,\frac{1}{N}\sum_{n=1}^N e(n\alpha)\, a_N(n)~=~0,\qquad\forall\alpha\in\Q.
\end{equation}
\item Similarly, we call a multiplicative topological dynamical system $(Y,S)$ \define{aperiodic} if for any periodic $P\colon\N\to\C$, any $g\in\Cont(Y)$, and any $y\in Y$ the sequences $P(n)$ and $b(n)=g(S_n y)$ are asymptotically uncorrelated. Equivalently, for all $g\in\Cont(Y)$ and $y\in Y$, if $b_N(n)\coloneqq g(S_n y)-\frac{1}{N}\sum_{n=1}^N g(S_ny)$, then
\begin{equation}
\label{eqn_multiplicative_aperiodicity}
\lim_{N\to\infty}\,\frac{1}{N}\sum_{n=1}^N e(n\alpha)\,b_N(n)~=~0,\qquad\forall\alpha\in\Q.
\end{equation}
\end{itemize}
\end{Definition}

\begin{Remark}
\
\begin{enumerate}	
[label=(\roman{enumi}),ref=(\roman{enumi}),leftmargin=*]
\item
An additive topological dynamical system $(X,T)$ is aperiodic if and only if any \define{ergodic}\footnote{A $T$-invariant Borel probability measures $\mu$ on $(X,T)$ is \define{ergodic} if for any Borel measurable set $A\subset X$ with $\mu(A\triangle T^{-1}A)=0$ satisfies $\mu(A)\in\{0,1\}$.} $T$-invariant Borel probability measure on $(X,T)$ is \define{totally ergodic}\footnote{A $T$-invariant Borel probability measures $\mu$ on $(X,T)$ is \define{totally ergodic} if $(X,T^m)$ is ergodic for all $m\in\N$.}.\footnote{The fact that the aperiodicity of $(X,T)$ implies that all ergodic measures are totally ergodic follows rather quickly from the definition. For the proof of the other direction, assume $(X,T)$ is not aperiodic, i.e., there exist $x\in X$, $m\in\N$, $f\in C(X)$, and a periodic $P\colon\N\to \C$ such that $f(T^nx)$ and $P(n)$ are not asymptoically uncorrelated. This means there exists a sequence $N_1<N_2<\ldots\in\N$ such that
\begin{align}
\label{eqn_aper_erg_toterg}
\lim_{i\to\infty}\frac{1}{N_i}\sum_{n=1}^{N_i} f(T^nx) P(n) \neq \left(\lim_{i\to\infty}\frac{1}{N_i}\sum_{n=1}^{N_i}  f(T^nx)\right)\left(\lim_{i\to\infty}\frac{1}{N_i}\sum_{n=1}^{N_i} P(n)\right).
\end{align}
Let $m\in\N$ be the period of $P$ and let $\Z/m\Z=\{0,1,\ldots,m-1\}$ denote the cyclic group of order $m$.
Define
$
\nu_i\coloneqq \frac{1}{N_i}\sum_{n=1}^{N_i} \delta_{(T^nx,{n}\bmod{m})}
$
where $\delta_{(T^nx,{n}\bmod{m})}$ is the point-mass at $(T^nx,{n}\bmod{m})\in X\times \Z/m\Z$. By passing to a subsequence of $N_1,N_2,\ldots$, we can assume without loss of generality that $\nu_i$ converges in the weak-$^*$ topology to a measure $\nu$.
Let $\nu=\int \nu_w\d\rho(w)$ be the ergodic decomposition of $\nu$ so that $\nu_w$ is an ergodic Borel probability measure on $X\times \Z/m\Z$ for every $w$. Let $\mu_w$ be the projection of $\nu_w$ onto the first coordinate and observe that $\mu_w$ is ergodic.
By assumption, this means that $\mu_w$ is also totally ergodic.
Let $m_{\Z/m\Z}$ denote the normalized counting measure on $\Z/m\Z$, which coincides with the projection of $\nu_w$ onto the second coordinate. Since $\mu_w$ and $m_{\Z/m\Z}$ are the marginals of $\nu_w$, and since $\mu_w$ is totally ergodic and $m_{\Z/m\Z}$ is purely atomic, we must have $\nu_w=\mu_w\otimes m_{\Z/m\Z}$. It follows that $\nu=\mu\otimes m_{\Z/m\Z}$, where $\mu=\int \mu_w\d\rho(w)$. Hence
\begin{align*}
\lim_{i\to\infty}\frac{1}{N_i}\sum_{n=1}^{N_i} f(T^nx) P(n)
&=
\int f\otimes P \d\nu
\\
&=
\left(\int f \d\mu\right)
\left(\int P \d m_{\Z/m\Z}\right)
\\
&=
\left(\lim_{i\to\infty}\frac{1}{N_i}\sum_{n=1}^{N_i}  f(T^nx)\right)\left(\lim_{i\to\infty}\frac{1}{N_i}\sum_{n=1}^{N_i} P(n)\right),
\end{align*}
a contradiction to \eqref{eqn_aper_erg_toterg}.}
\item
It is straightforward to show that \eqref{eqn_multiplicative_aperiodicity} is equivalent to the assertion that for all non-principal Dirichlet characters $\chi$ one has $\lim_{N\to\infty}\frac{1}{N}\sum_{n=1}^N g(S_{n}y)\chi(n)=0$.
\item
It follows from \eqref{eqn_davenport_lio} that a multiplicative rotation on two points is an aperiodic multiplicative topological dynamical system. More generally, we show in \cref{lem_Omega_derived_systems_are_aperiodic} below that for \define{any} additive topological dynamical system $(X,T)$ the corresponding multiplicative topological dynamical system $(X,T^\Omega)$ is aperiodic. 
\end{enumerate}
\end{Remark}

Note that if a multiplicative topological dynamical system $(Y,S)$ is not aperiodic then there exists an addditive topological dynamical system $(X,T)$ (namely a cyclic rotation on finitely many points) such that $(X,T)$ and $(Y,S)$ are not disjoint.\footnote{If $(Y,S)$ is not aperiodic then for some $y\in Y$, $g\in C(Y)$, and $m\in\N$ there exists a periodic sequence $P(n)$ of period $m$ such that $b(n)=g(S_ny)$ and $P(n)$ are not asymptotically uncorrelated. It follows that rotation on $m$ points and $(Y,S)$ are not disjoint.} Conversely, if $(X,T)$ is not aperiodic then there exists $(Y,S)$ such that $(X,T)$ and $(Y,S)$ are not disjoint. In this sense, aperiodicity is a necessary condition for disjointness between additive and multiplicative systems.
The following conjecture, which is a generalization of \ref{conj_sarnak} and one of our main illustrations of heuristic \eqref{eqn_heuristic}, asserts that if $(Y,S)$ is finitely generated then aperiodicity is not just a necessary but also sufficient condition.

\begin{Conjecture}
\label{conj_AM}
Let $(X,T)$ be an additive topological dynamical system of zero entropy and let $(Y,S)$ be a finitely generated multiplicative topological dynamical system.
If either $(X,T)$ is aperiodic or $(Y,S)$ is aperiodic then $(X,T)$ and $(Y,S)$ are disjoint.
\end{Conjecture}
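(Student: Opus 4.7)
The plan is to reduce \cref{conj_AM} to a suitable generalization of the Kátai-Bourgain-Sarnak-Ziegler (KBSZ) orthogonality criterion, adapted to finitely generated multiplicative actions. First, I would subtract constants from $f$ and $g$ to reduce to showing $\lim_{N\to\infty}\frac{1}{N}\sum_{n=1}^N f(T^n x)\,\overline{g(S_n y)}=0$, where one of the two sequences $f(T^n x)$ or $g(S_n y)$ has vanishing \Cesaro{} average. For $(X,T)$ this can be arranged by passing to the orbit closure of $x$ and selecting an appropriate ergodic invariant measure. For $(Y,S)$, possibly after modifying $S_p$ on a sparse set of primes (in the spirit of the treatment of Wirsing's theorem sketched after \cref{thm_dynamical_MVT_fg_sue}), one would arrange for strong unique ergodicity and apply \cref{thm_dynamical_MVT_fg_sue}.

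The second step exploits finite generation structurally. If $R_1,\ldots,R_d$ are the generators and $P_i=\{p\in\P:S_p=R_i\}$, then since $(\N,\cdot)$ is abelian the $R_i$ must commute on the orbit closure of $y$, and consequently $S_n = R_1^{e_1(n)}\cdots R_d^{e_d(n)}$ with each $e_i(n)=\sum_{p\in P_i}v_p(n)$ completely additive. The sequence $b(n)=g(S_n y)$ thus takes the form $G(e_1(n),\ldots,e_d(n))$, and the analogue of the classical identity $b(pn)=b(p)b(n)$ is the shift identity $b(pn)=g\bigl(R_{i(p)}(S_n y)\bigr)$, obtained by incrementing the coordinate $e_{i(p)}$ by one. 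This is precisely the kind of semi-multiplicativity that the KBSZ machinery is designed to exploit.

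The third step is to establish a vector-valued KBSZ criterion of the following type: given a bounded $a\colon\N\to\C$, if
\[
\lim_{N\to\infty}\,\frac{1}{N}\sum_{n\leq N} a(pn)\,\overline{a(qn)}\,=\,0
\]
for all distinct primes $p,q$ in a subset of $\P$ with $\sum 1/p = \infty$, then $a$ is asymptotically orthogonal to every sequence $b(n)=g(S_n y)$ arising from a finitely generated $(Y,S)$. The aperiodicity hypothesis enters in two places: it handles the diagonal contributions coming from $p,q$ in a common class $P_i$, and, via the reformulation of \eqref{eqn_multiplicative_aperiodicity} in terms of non-principal Dirichlet characters, it rules out the local obstructions that otherwise prevent disjointness from rational rotations.

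The principal obstacle is the verification of the KBSZ hypothesis itself for deterministic sequences $a(n)=f(T^n x)$; this is essentially \ref{conj_sarnak} in disguise and remains open in full generality. I therefore expect realistic progress on \cref{conj_AM} to proceed case by case: by restricting $(X,T)$ to structured classes for which the KBSZ inputs are already available (nilsystems, horocycle flows, certain substitution systems, and similar), with the aperiodicity hypothesis on $(X,T)$ or $(Y,S)$ playing the complementary role of eliminating the local obstructions that zero entropy alone cannot address.
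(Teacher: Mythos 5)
The statement you are asked about is \cref{conj_AM}, which is a \emph{conjecture} in the paper: the authors offer no proof of it, and indeed it contains \ref{conj_sarnak} as the special case where $(Y,S)$ is multiplicative rotation on two points, so a complete proof is not to be expected. Your proposal is therefore correctly calibrated: you do not claim a proof, you identify precisely why the general case is out of reach (the bilinear KBSZ hypothesis for an arbitrary zero-entropy deterministic sequence $f(T^nx)$ is essentially Sarnak's conjecture itself), and the partial strategy you outline is essentially the one the paper implements for its special cases, Theorems \ref{thm_ortho_fg_sue_nilsequences} and \ref{thm_ortho_fg_sue_horocycle}. There the authors use the \Katai{}-type orthogonality criterion (\cref{prop_katai}) over a single class $P_e=\{p:S_p=R_e\}$ with $\sum_{p\in P_e}1/p=\infty$, reduce via Stone--Weierstrass to $|g|=1$ so that $g(S_{pn}y)\overline{g(S_{qn}y)}=1$ for $p,q\in P_e$ (the cancellation mechanism you describe as ``semi-multiplicativity''), and feed in the bilinear estimates available for structured additive systems: total unique ergodicity and vertical characters for nilsystems, and the Bourgain--Sarnak--Ziegler equidistribution results for horocycle flows. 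Aperiodicity enters exactly as you anticipate, to dispose of the rational-spectrum (local) obstructions --- in the nilsystem proof, aperiodicity of $(X,T)$ upgrades unique ergodicity to total unique ergodicity, while aperiodicity of $(Y,S)$ handles the group-rotation/disconnected-component part.

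Two small cautions about the sketch itself. First, the device of ``modifying $S_p$ on a sparse set of primes to force strong unique ergodicity'' is justified in the paper only for multiplicative rotations by $S^1$-valued completely multiplicative functions (where changing $b$ on a set of primes with convergent reciprocal sum provably does not affect the existence of the mean value); for a general finitely generated $(Y,S)$ this step would need an argument, and in fact the paper's Theorems \ref{thm_ortho_fg_sue_nilsequences} and \ref{thm_ortho_fg_sue_horocycle} avoid it entirely --- they do not assume strong unique ergodicity, and the mean values $\frac1N\sum_{n\leq N} g(S_ny)$ need not converge there; disjointness is formulated precisely so that this does not matter. Second, your vector-valued KBSZ step only needs a single class $P_e$ with divergent reciprocal sum (which always exists since $d$ is finite), not a simultaneous treatment of all classes; this is how the paper sidesteps the ``diagonal contributions'' you mention. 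With those caveats, your assessment matches the paper: the full conjecture remains open, and progress is case by case through classes of $(X,T)$ for which the requisite bilinear estimates are known.
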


Observe that \ref{conj_sarnak} corresponds to the special case of \cref{conj_AM} where $(Y,S)$ is a multiplicative rotation on two points.

Assuming that $(X,T)$ is uniquely ergodic and $(Y,S)$ is strongly uniquely ergodic, we have the following aesthetically appealing variant of \cref{conj_AM}.

\begin{Conjecture}
\label{conj_AM_ue}
Let $(X,\mu,T)$ be a uniquely ergodic additive topological dynamical system of zero entropy and let $(Y,\nu,S)$ be a finitely generated and strongly uniquely ergodic multiplicative topological dynamical system.
If either $(X,T)$ is aperiodic or $(Y,S)$ is aperiodic then
\begin{equation}
\label{eqn_AM_ue_sue_1}
\lim_{N\to\infty}\,\frac{1}{N}\sum_{n=1}^N f(T^n x) g(S_n y)~=~\left(\int f\d\mu\right)
\left(\int g\d\nu\right)
\end{equation}
for all $x\in X$, $f\in \Cont(X)$, $y\in Y$, and $g\in\Cont(Y)$.
\end{Conjecture}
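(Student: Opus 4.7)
The plan is to adapt the Bourgain--Sarnak--Ziegler orthogonality criterion to the finitely generated multiplicative setting, using \cref{thm_dynamical_MVT_fg_sue} as the main analytic input. As a first step, write $\bar f=\int f\d\mu$, $\tilde f=f-\bar f$, $\bar g=\int g\d\nu$, $\tilde g=g-\bar g$, and expand
\[
f(T^nx)g(S_ny)=\tilde f(T^nx)\tilde g(S_ny)+\bar f\,\tilde g(S_ny)+\bar g\,\tilde f(T^nx)+\bar f\bar g.
\]
By unique ergodicity of $(X,\mu,T)$ and \cref{thm_dynamical_MVT_fg_sue} applied to $(Y,\nu,S)$, the two middle terms average to zero and the last contributes $\bar f\bar g$; hence it suffices to prove
\[
\lim_{N\to\infty}\frac{1}{N}\sum_{n=1}^N\tilde f(T^nx)\tilde g(S_ny)=0 \qquad(\star)
\]
whenever $\int\tilde f\d\mu=0=\int\tilde g\d\nu$. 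I treat here the case in which $(Y,S)$ is aperiodic; the case in which $(X,T)$ is aperiodic is handled by a dual argument exchanging the roles of the Fourier expansions on the two sides.

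Next I would exploit the finite generation of $(Y,S)$. Let $R_1,\dots,R_d$ be generators of $S$, set $P_i\coloneqq\{p\in\P:S_p=R_i\}$ and $h_i\coloneqq\tilde g\circ R_i\in\Cont(Y)$, so that $\tilde g(S_{pn}y)=h_i(S_ny)$ whenever $p\in P_i$. The Tur\'an--Kubilius-type identity
\[
\sum_{n=1}^N\tilde f(T^nx)\tilde g(S_ny)\,\omega_N(n)=\sum_{i=1}^d\sum_{\substack{p\in P_i\\ p\leq N}}\sum_{m\leq N/p}\tilde f(T^{pm}x)\,h_i(S_my),
\]
with $\omega_N(n)=\#\{p\leq N:p\mid n\}$ concentrated near $\log\log N$ in $L^2$-sense, then reduces $(\star)$, after dividing by $N\log\log N$ and applying Cauchy--Schwarz to absorb the fluctuation of $\omega_N$, to showing
\[
\lim_{N\to\infty}\frac{1}{\log\log N}\sum_{i=1}^d\sum_{\substack{p\in P_i\\ p\leq N}}\frac{1}{p}\left(\frac{1}{N/p}\sum_{m\leq N/p}\tilde f(T^{pm}x)\,h_i(S_my)\right)=0.
\]
By strong unique ergodicity, $\nu$ is $S_p$-invariant for all $p$ outside a set $P_0$ with $\sum_{p\in P_0}1/p<\infty$, so primes in $P_0$ are negligible on the logarithmic scale; for $p\notin P_0$, the zero entropy of $(X,T^p)$ combined with strong unique ergodicity forces the inner averages to converge to joining integrals of $\tilde f$ against $h_i$ over $X\times Y$.

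Aperiodicity of $(Y,S)$ should finish the argument by eliminating these joining integrals: a non-vanishing integral of the form $\int\tilde f\otimes h_i\,d\lambda$, with $\lambda$ a joining of $(X,\mu,T^p)$ and $(Y,\nu,S)$, would force correlation between the $T^p$-orbit of $\tilde f$ and the $S$-orbit of $\tilde g$ along some rational eigenvalue of $T^p$, which in turn would realise a Dirichlet-character-type alignment on the $S$-side, contradicting \eqref{eqn_multiplicative_aperiodicity}. The zero entropy hypothesis on $(X,T)$ is what permits one to discard non-deterministic contributions to the joining in this spectral decomposition.

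The main obstacle is precisely this last step: making the passage from aperiodicity to vanishing joining integrals rigorous in the absence of genuine multiplicativity of $g(S_ny)$. In Sarnak's original setting the analogous reduction goes through K\'atai's lemma and Dirichlet-character orthogonality, but the Tur\'an--Kubilius averaging above entangles the generator index $i(p)$ with the $T^p$-orbit in a way that no clean orthogonality criterion currently handles. I expect that closing this step in full generality may require an additional structural hypothesis on $(X,T)$ beyond zero entropy, which is consistent with \cref{conj_AM_ue} being posed as a conjecture.
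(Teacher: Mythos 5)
There is no proof of this statement to compare against: \cref{conj_AM_ue} is posed in the paper as an open conjecture, and it contains \ref{conj_sarnak} (for uniquely ergodic zero-entropy systems) as the special case where $(Y,S)$ is multiplicative rotation on two points, which is finitely generated, strongly uniquely ergodic, and aperiodic by \eqref{eqn_davenport_lio}. So any complete proof along the lines you sketch would in particular resolve that case of Sarnak's conjecture, and the ``main obstacle'' you candidly flag at the end is not a technical loose end but the entire open content of the statement. Your reductions up to that point are fine in spirit (centering $f$ and $g$, using unique ergodicity and \cref{thm_dynamical_MVT_fg_sue} to dispose of the cross terms, grouping primes by generator class), but the step where ``zero entropy of $(X,T^p)$ combined with strong unique ergodicity forces the inner averages to converge to joining integrals'' is not justified and cannot be in this generality: zero entropy gives no convergence of the twisted averages $\frac{1}{N/p}\sum_{m\leq N/p}\tilde f(T^{pm}x)h_i(S_m y)$, and the subsequent claim that aperiodicity kills the limiting correlations via a ``Dirichlet-character-type alignment'' is exactly the unproved disjointness assertion restated in different language.

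It may help to see how the paper handles the cases it can prove (\cref{thm_ortho_fg_sue_nilsequences} and \cref{thm_ortho_fg_sue_horocycle}), since the mechanism is cleaner than your Tur\'an--Kubilius weighting and isolates the true difficulty. One reduces to $|g|=1$, picks a single generator class $P_e$ with $\sum_{p\in P_e}1/p=\infty$, and observes that for $p,q\in P_e$ the $g$-factors cancel identically: $g(S_{pn}y)\overline{g(S_{qn}y)}=g(R_eS_ny)\overline{g(R_eS_ny)}=1$. The bilinear orthogonality criterion (\cref{prop_katai}) then reduces everything to the purely additive estimate $\lim_{N\to\infty}\frac1N\sum_{n\leq N}f(T^{pn}x)\overline{f(T^{qn}x)}=0$ for distinct $p,q\in P_e$, which is known for nilsystems (via vertical characters) and for horocycle flows (via Bourgain--Sarnak--Ziegler), but is precisely what is unavailable for a general aperiodic uniquely ergodic zero-entropy $(X,T)$. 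In other words, after the correct soft reductions the multiplicative side drops out entirely, and the obstruction you encountered lives on the additive side; no additional averaging scheme over the generator indices will remove it. Your conclusion that the argument cannot be closed without new input is therefore accurate, and consistent with the statement being a conjecture rather than a theorem in the paper.
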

Note that \cref{thm_dynamical_MVT_fg_sue} corresponds to the special case of \cref{conj_AM_ue} where $(X,T)$ is the trivial system. Moreover, due to \cref{thm_dynamical_MVT_fg_sue}, we see that \cref{conj_AM} actually implies \cref{conj_AM_ue}.

An extension of \eqref{eqn_davenport_lio}, which constitutes a special case of \ref{conj_sarnak}, was established in \cite{GT12b} (see also \cite{FFKPL19}) and asserts that
\begin{equation}
\label{eqn_GT_lio}
\lim_{N\to\infty}\frac{1}{N}\sum_{n=1}^N f(T^nx)\, \lio(n)\,=\,0,
\end{equation}
for all nilsystems $(X,T)$, $x\in X$, and $f\in\Cont(X)$ (see \cref{ftnt_1} for the definition of a nilsystem). We have the following extension of \eqref{eqn_GT_lio}, which establishes a special case of \cref{conj_AM}.

\begin{Maintheorem}
\label{thm_ortho_fg_sue_nilsequences}
Let $(Y,S)$ be a finitely generated multiplicative topological dynamical system, and let $(X,T)$ be a nilsystem. If either $(X,T)$ is aperiodic or $(Y,S)$ is aperiodic then $(X,T)$ and $(Y,S)$ are disjoint.
\end{Maintheorem}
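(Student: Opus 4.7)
The plan is to adapt the strategy used by Green and Tao to establish the nilsequence case of Sarnak's conjecture, replacing the M\"obius function by the multiplicative sequences $b(n)=g(S_ny)$ arising from finitely generated $(Y,S)$, and invoking the aperiodicity hypothesis where the original argument relies on the prime number theorem. Unpacking \cref{def_disjoitness} and \cref{def_indepndence_of_sequences} and subtracting Ces\`aro means, it suffices to show
\begin{equation*}
\frac{1}{N}\sum_{n=1}^N f(T^nx)\,\overline{g(S_ny)} \;\longrightarrow\; 0 \qquad (\star)
\end{equation*}
whenever $\int f\d\mu=0$, where $\mu$ is the $T$-invariant measure on the orbit closure of $x$. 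By Leibman's equidistribution theorem this orbit closure is itself a sub-nilmanifold on which $T$ acts uniquely ergodically, so we may assume $(X,T)=(G/\Gamma, g_0\cdot)$ is a uniquely ergodic $k$-step nilsystem.

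I would prove $(\star)$ by induction on the nilpotency step $k$. For the base case $k=1$, $(X,T)$ is an equicontinuous group rotation, and Fourier expansion reduces $(\star)$ to the statement $\frac{1}{N}\sum_{n=1}^N e(n\theta)\,\overline{g(S_ny)}\to 0$ for every nontrivial eigenvalue $\theta$. When $\theta\in\Q$ the exponential is periodic, and the aperiodicity hypothesis (on whichever side it is assumed) closes this case at once. When $\theta\notin\Q$ one needs a Daboussi-type theorem for finitely generated multiplicative systems. This should follow by adapting the classical proof of \eqref{eqn_daboussi}: since $\{S_p:p\in\P\}\subset\{R_1,\ldots,R_d\}$ is finite, $b(n)$ is determined by a finite amount of information about the prime factorization of $n$, and a Tur\'an--Kubilius style inequality combined with the mean-value statement \cref{thm_dynamical_MVT_fg_sue} (applied after possibly adjusting $(Y,S)$ on a set of primes $P\subset\P$ with $\sum_{p\in P}1/p<\infty$, which affects neither hypothesis nor conclusion) should supply the requisite cancellation against $e(n\theta)$.

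For the inductive step I would apply a van der Corput inequality to bound the square of $(\star)$ by an average over shifts $h$ of quantities of the form
\begin{equation*}
\frac{1}{N}\sum_{n=1}^N f(T^{n+h}x)\overline{f(T^nx)}\cdot \overline{g(S_{n+h}y)}g(S_ny).
\end{equation*}
A standard Host--Kra--Leibman manipulation recasts $n\mapsto f(T^{n+h}x)\overline{f(T^nx)}$ as a nilsequence on a nilmanifold of step $k-1$, and the aperiodicity of $(X,T)$ is inherited by these derivatives, so the inductive hypothesis would apply provided the second factor were of the right shape. The hard point of the proof -- where genuinely new input is required -- is precisely that the shifted multiplicative factor $n\mapsto\overline{g(S_{n+h}y)}g(S_ny)$ is \emph{not} of the form $g'(S_ny')$ for a finitely generated system, because additive shifts destroy multiplicativity. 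My proposal for overcoming this obstruction is to combine a K\'atai-type orthogonality criterion with the finite generation of $S$: pigeonholing on residues of $n$ and $n+h$ modulo large products of small primes reduces the shifted correlations to correlations over coprime subsequences, on which multiplicativity is available, and then one can close the argument by a second invocation of \cref{thm_dynamical_MVT_fg_sue} applied to an enlarged finitely generated system that simultaneously encodes the actions at $n$ and $n+h$.
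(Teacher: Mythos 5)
There is a genuine gap, and you have put your finger on it yourself: the inductive step. After van der Corput you must control averages of $f(T^{n+h}x)\overline{f(T^nx)}\cdot\overline{g(S_{n+h}y)}g(S_ny)$, i.e.\ \emph{shifted} self-correlations of the multiplicative sequence $b(n)=g(S_ny)$. This is a Chowla/Elliott-type problem (already for $b=\lio$ it is open in the generality you would need), and the proposed repair does not work: pigeonholing $n$ and $n+h$ into residue classes modulo products of small primes does not restore multiplicativity of $n\mapsto \overline{b(n+h)}b(n)$, and there is no ``enlarged finitely generated system that simultaneously encodes the actions at $n$ and $n+h$'' -- the map $n\mapsto (S_n,S_{n+h})$ is not an action of $(\N,\cdot)$, precisely because additive shifts destroy multiplicativity, as you note. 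Hence \cref{thm_dynamical_MVT_fg_sue} cannot be invoked there. Note also that this is not actually the Green--Tao strategy: their proof avoids shifted correlations of $\mob$ by a quantitative factorization/equidistribution theorem on nilmanifolds combined with Type I/II bilinear sum estimates obtained from Vaughan's identity, i.e.\ from arithmetic structure of $\mob$ that a general finitely generated system does not possess. Finally, your inductive step assumes aperiodicity of $(X,T)$ is available (``inherited by these derivatives''), whereas the theorem also covers the case where only $(Y,S)$ is aperiodic; that case needs a separate argument.

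The paper's proof sidesteps the shift problem entirely: there is no van der Corput in $h$. One reduces (Stone--Weierstrass) to $|g|=1$ and to $f$ a vertical character of the $s$-step nilmanifold; the trivial-frequency case drops to step $s-1$ by induction, and for a nontrivial vertical frequency one applies the K\'atai orthogonality criterion (\cref{prop_katai}) directly to the product $a(n)=f(T^nx)g(S_ny)$. The key point is that finite generation gives some generator $R_e$ with $\sum_{p\in P_e}1/p=\infty$, and for $p,q\in P_e$ the multiplicative factors cancel in the dilated correlation: $a(pn)\overline{a(qn)}=f(T^{pn}x)\overline{f(T^{qn}x)}$ since $g(S_{pn}y)=g(S_{qn}y)=g(R_eS_ny)$ and $|g|=1$. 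The vanishing of $\frac1N\sum_{n\leq N}f(T^{pn}x)\overline{f(T^{qn}x)}$ for distinct primes is the result of Frantzikinakis--Host for totally uniquely ergodic nilsystems and nontrivial vertical characters; when only $(Y,S)$ is aperiodic one additionally works on the finitely many connected components of $X$, which $T$ permutes cyclically, applying the same argument to $T^m$ on each component and using aperiodicity of $(Y,S)$ to dispose of the rational-spectrum part. Your base case ($k=1$, irrational $\theta$) is in the right spirit -- it is exactly this K\'atai cancellation trick -- but dilations by primes, not additive shifts, are the mechanism that must carry the whole induction.
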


The following corollary of \cref{thm_ortho_fg_sue_nilsequences} generalizes \cref{cor_ortho_Omega_rational_phases} from the previous subsection.

\begin{Corollary}
\label{cor_ortho_Omega_linear_phases}
Let $(X,T)$ be a uniquely ergodic additive topological dynamical system and let $\mu$ denote the corresponding unique $T$-invariant Borel probability measure on $X$.
Then
\begin{equation*}
\lim_{N\to\infty}\frac{1}{N}\sum_{n=1}^N e(n\alpha)\, f\big(T^{\Omega(n)}x\big)
~=~
\begin{cases}
\int f\d\mu,&\text{if}~\alpha\in\Z
\\
0,&\text{if}~\alpha\in\R\setminus\Z
\end{cases}
\end{equation*}
for every $f\in\Cont(X)$ and every $x\in X$.
\end{Corollary}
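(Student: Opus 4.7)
The plan is to split the corollary into two cases according to the arithmetic nature of $\alpha$. When $\alpha\in\Z$, one has $e(n\alpha)=1$ for every $n$, and the desired identity reduces immediately to \cref{thm_dynamical_MVT_Omega} applied to $(X,\mu,T)$. Thus the substantive case is $\alpha\in\R\setminus\Z$, which I would derive from the disjointness result \cref{thm_ortho_fg_sue_nilsequences}.

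To set up that application, I would pair two auxiliary systems. On the multiplicative side I take $(Y,S)\coloneqq(X,T^\Omega)$. This system is finitely generated in the sense of \cref{def_fg_sue}, since $S_p=T^{\Omega(p)}=T$ for every prime $p$, so the single map $T$ generates the action; and it is aperiodic by \cref{lem_Omega_derived_systems_are_aperiodic}. On the additive side I take the rotation $T_0\colon t\mapsto t+\alpha\pmod{1}$ on the torus $\T$, which is an abelian (hence nilpotent) Lie group translation and therefore a nilsystem. Applying \cref{thm_ortho_fg_sue_nilsequences} to the pair $(\T,T_0)$ and $(Y,S)$ — the aperiodicity of $(Y,S)$ supplying the required hypothesis — yields that these two systems are disjoint in the sense of \cref{def_disjoitness}.

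Unpacking \cref{def_disjoitness,def_indepndence_of_sequences} with the continuous functions $t\mapsto e(t)$ on $\T$ and $\overline{f}\in\Cont(X)$ on $X$, together with the starting points $0\in\T$ and $x\in X$, disjointness gives
\begin{equation*}
\lim_{N\to\infty}\left(\frac{1}{N}\sum_{n=1}^N e(n\alpha)\,f(T^{\Omega(n)}x)-\left(\frac{1}{N}\sum_{n=1}^N e(n\alpha)\right)\left(\frac{1}{N}\sum_{n=1}^N f(T^{\Omega(n)}x)\right)\right)=0.
\end{equation*}
Since $\alpha\in\R\setminus\Z$, an elementary geometric-sum identity gives $\bigl|\frac{1}{N}\sum_{n=1}^N e(n\alpha)\bigr|=O(1/N)$, while the second Ces\`aro average is bounded by $\|f\|_\infty$. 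Hence the product term vanishes in the limit, yielding the desired conclusion.

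The main conceptual difficulty has been absorbed entirely into \cref{thm_ortho_fg_sue_nilsequences}; once it is available, the only things left are the routine verifications that $(X,T^\Omega)$ is finitely generated (essentially by inspection) and aperiodic (supplied by \cref{lem_Omega_derived_systems_are_aperiodic}), and that circle rotations qualify as nilsystems. The only modest subtlety is the cosmetic choice $g=\overline{f}$ needed to align the complex conjugations in \cref{def_indepndence_of_sequences} with the way the statement is phrased.
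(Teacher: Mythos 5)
Your proposal is correct and follows essentially the same route as the paper: apply \cref{thm_ortho_fg_sue_nilsequences} to the rotation by $\alpha$ (a $1$-step nilsystem) and the finitely generated, aperiodic system $(X,T^\Omega)$ (aperiodicity via \cref{lem_Omega_derived_systems_are_aperiodic}), then combine the resulting asymptotic independence with \cref{thm_dynamical_MVT_Omega} and the elementary evaluation of $\frac1N\sum_{n\leq N}e(n\alpha)$. The only cosmetic difference is that you split into the cases $\alpha\in\Z$ and $\alpha\notin\Z$, while the paper treats both at once.
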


Another result related to \cref{cor_ortho_Omega_linear_phases} is the following.

\begin{Corollary}
\label{cor_ortho_Omega_Besicovitch_ap_fctns}
Let $(X,T)$ be a uniquely ergodic additive topological dynamical system and let $\mu$ denote the corresponding unique $T$-invariant Borel probability measure on $X$. Let $a\colon\N\to\C$ be a Besicovitch almost periodic function\footnote{A bounded arithmetic function $a\colon\N\to\C$ is called \define{Besicovitch almost periodic} if for every $\epsilon>0$ there exists a trigonometric polynomial $P(n)\coloneqq c_1 e(n\alpha_1)+\ldots+c_L e(n\alpha_L)$, where $c_1,\ldots,c_L\in\C$ and $\alpha_1,\ldots,\alpha_L\in \R$, such that $\limsup_{N\to\infty}\frac{1}{N}\sum_{n=1}^N |a(n)-P(n)|\,\leq\,\epsilon$.} and denote by $M(a)\coloneqq\lim_{N\to\infty}\frac{1}{N}\sum_{n=1}^N a(n)$ its mean value.
Then
\begin{equation}
\label{eqn_ortho_Omega_Bes_ap}
\lim_{N\to\infty}\frac{1}{N}\sum_{n=1}^N a(n)\,f\big(T^{\Omega(n)}x\big)
~=~ M(a)\left(
\int f\d\mu\right)
\end{equation}
for every $f\in\Cont(X)$ and every $x\in X$.
\end{Corollary}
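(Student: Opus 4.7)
The plan is to reduce \cref{cor_ortho_Omega_Besicovitch_ap_fctns} to \cref{cor_ortho_Omega_linear_phases} via a direct approximation argument. First I would treat the case of a pure exponential $a(n) = e(n\alpha)$: since $M(e(n\alpha)) = 1$ if $\alpha \in \Z$ and $M(e(n\alpha))=0$ otherwise, equation \eqref{eqn_ortho_Omega_Bes_ap} in this case is precisely the content of \cref{cor_ortho_Omega_linear_phases}. By linearity, \eqref{eqn_ortho_Omega_Bes_ap} then holds for any trigonometric polynomial $P(n) = \sum_{j=1}^L c_j e(n\alpha_j)$, with $M(P) = \sum_{j:\,\alpha_j\in\Z} c_j$.

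To pass from trigonometric polynomials to a general Besicovitch almost periodic function $a$, fix $\epsilon > 0$ and choose $P$ with $\limsup_{N\to\infty}\frac{1}{N}\sum_{n=1}^N |a(n) - P(n)| \leq \epsilon$. The triangle inequality yields
\[
\left|\frac{1}{N}\sum_{n=1}^N \bigl(a(n) - P(n)\bigr)\,f\big(T^{\Omega(n)}x\big)\right| \,\leq\, \|f\|_\infty \cdot \frac{1}{N}\sum_{n=1}^N |a(n) - P(n)|,
\]
so the ergodic averages along $a$ and along $P$ differ by at most $\epsilon\|f\|_\infty$ in the $\limsup$, and similarly $|M(a) - M(P)| \leq \epsilon$. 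Combining these estimates with the identity already established for $P$ gives
\[
\limsup_{N\to\infty}\left|\frac{1}{N}\sum_{n=1}^N a(n)\,f\big(T^{\Omega(n)}x\big) \,-\, M(a)\int f\d\mu\right| \,\leq\, 2\epsilon\|f\|_\infty,
\]
and letting $\epsilon \to 0$ finishes the proof.

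Since \cref{cor_ortho_Omega_linear_phases} supplies the genuine content, I do not anticipate any serious obstacle here: this is a soft approximation argument once the linear-phase corollary is in hand. The only step worth double-checking is that $M(a)$ exists for every Besicovitch almost periodic $a$, but this is a standard consequence of the approximation property, since the means $M(P_k)$ of a sequence of trigonometric polynomials converging to $a$ in Besicovitch seminorm form a Cauchy sequence, whose limit must coincide with $\lim_{N\to\infty}\frac{1}{N}\sum_{n=1}^N a(n)$.
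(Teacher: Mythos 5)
Your argument is correct and is essentially the paper's own proof: the paper likewise deduces the statement for trigonometric polynomials from \cref{cor_ortho_Omega_linear_phases} (by linearity) and then passes to general Besicovitch almost periodic $a$ by approximation, exactly as in your triangle-inequality estimate. Your extra remark on the existence of $M(a)$ is a standard point and is fine as stated.
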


We remark that the indicator function of the squarfree numbers $\1_{\square\text{-free}}$ is Besicovitch almost periodic and its mean value equals $6/\pi^2$. Therefore, choosing $a(n)=\1_{\square\text{-free}}(n)$ in \eqref{eqn_ortho_Omega_Bes_ap} recovers \eqref{eqn_ud_of_squarefree}. More generally, for every $k\geq 2$ the indicator function for the set of $k$-free numbers is Besicovitch almost periodic with mean value $1/\zeta(k)$ and hence we can actually get \eqref{eqn_ud_of_k-free} from \eqref{eqn_ortho_Omega_Bes_ap}.

From \cref{thm_ortho_fg_sue_nilsequences} we can also derive a generalization of \cref{cor_polynomoial_ud_Omega}.

\begin{Corollary}
\label{cor_polynomoial_joint_ud}
Let $p(x)=c_k x^k + \ldots + c_1 x + c_0$ and $q(x)=d_\ell x^\ell+\ldots+d_1 x+d_0$ be two polynomials with real coefficients and suppose at least one of the coefficients $c_1,\ldots,c_k$ is irrational and at least one of the coefficients $d_1,\ldots,d_\ell$ is irrational. Then $\big(p(n),q(\Omega(n))\big)_{n\in\N}$ is uniformly distributed in the two-dimensional torus $\T^2$. 
\end{Corollary}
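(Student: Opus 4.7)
The plan is to verify Weyl's equidistribution criterion on $\T^2$, i.e.\ to prove
$$\lim_{N\to\infty}\frac{1}{N}\sum_{n=1}^N e\big(h_1 p(n)+h_2 q(\Omega(n))\big)=0$$
for every $(h_1,h_2)\in\Z^2\setminus\{(0,0)\}$. If $h_1\neq 0$ and $h_2=0$, this is the classical Weyl theorem for the polynomial $h_1 p$: multiplying by a nonzero integer preserves irrationality of some $c_i$ with $i\geq 1$. If $h_1=0$ and $h_2\neq 0$, it is immediate from \cref{cor_polynomoial_ud_Omega} applied to $q$, which gives uniform distribution of $q(\Omega(n))$ mod $1$ and hence, by the one-dimensional Weyl criterion, cancellation of the exponential sum. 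The substantive case is $h_1,h_2$ both nonzero.

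For that case I would follow the strategy of Furstenberg \cite[pp.\ 67--69]{Furstenberg81a}, who exhibits every polynomial phase as an orbit under a unipotent affine transformation on a torus. This supplies integers $a,b\in\N$, unipotent affine transformations $R\colon\T^a\to\T^a$ and $T\colon\T^b\to\T^b$, points $y\in\T^a$, $x\in\T^b$, and continuous functions $g\in\Cont(\T^a)$, $f\in\Cont(\T^b)$ satisfying
$$e(h_1 p(n))=g(R^n y)\qquad\text{and}\qquad e(h_2 q(m))=f(T^m x)\qquad\text{for all }n,m\in\N.$$
Substituting $m=\Omega(n)$ reduces the task to establishing
$$\lim_{N\to\infty}\frac{1}{N}\sum_{n=1}^N g(R^n y)\,f\big(T^{\Omega(n)}x\big)=0.$$

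Now $(\T^a,R)$ is a nilsystem, while $(\T^b,T^\Omega)$ is a finitely generated multiplicative topological dynamical system, which is aperiodic by \cref{lem_Omega_derived_systems_are_aperiodic}. Therefore \cref{thm_ortho_fg_sue_nilsequences} applies and asserts that these two systems are disjoint, giving
$$\lim_{N\to\infty}\left(\frac{1}{N}\sum_{n=1}^N g(R^n y) f\big(T^{\Omega(n)}x\big) - \left(\frac{1}{N}\sum_{n=1}^N g(R^n y)\right)\left(\frac{1}{N}\sum_{n=1}^N f\big(T^{\Omega(n)}x\big)\right)\right)=0.$$
Since $\frac{1}{N}\sum_{n=1}^N g(R^n y)=\frac{1}{N}\sum_{n=1}^N e(h_1 p(n))\to 0$ by Weyl's polynomial equidistribution theorem, the product term vanishes in the limit and the desired cancellation follows.

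The main conceptual hurdle is lining up the hypotheses of \cref{thm_ortho_fg_sue_nilsequences}: the multiplicative factor must be presented in the form $(Y,T^\Omega)$ for a \emph{single} continuous transformation $T$, since it is precisely for such systems that the aperiodicity statement after \cref{def_apeiodic_systems} is automatic. Furstenberg's realisation of $e(h_2 q(\cdot))$ as a nilsystem orbit delivers exactly this single-generator structure, and once this is observed the rest of the argument is routine.
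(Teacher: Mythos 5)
Your argument is correct and is essentially the derivation the paper intends (the paper does not actually write out a proof of this corollary; the natural route within its framework is exactly yours: Weyl's criterion on $\T^2$, Furstenberg's unipotent affine realizations of the two polynomial phases, and disjointness via \cref{thm_ortho_fg_sue_nilsequences} — equivalently \cref{cor_ortho_fg_sue_nilsequences} — combined with \cref{lem_Omega_derived_systems_are_aperiodic} and Weyl's theorem to kill the product term). One small point to make explicit: before invoking \cref{lem_Omega_derived_systems_are_aperiodic} you should replace $\T^b$ by the orbit closure of $x$ under $T$, since that lemma requires unique ergodicity, which holds for the transitive (but not necessarily the full-torus) unipotent affine system, exactly as in the paper's proof of \cref{cor_polynomoial_ud_Omega}.
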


Any sequence of the form $f(T^nx)$, $n\in\N$, where $(X,T)$ is a nilsystem, $x\in X$, and $f\in\Cont(X)$ is called a \define{nilsequence}.
Nilsequences naturally generalize sequences of the form $e(Q(n))$, where $Q$ is a polynomial, and play an important role in additive combinatorics. Note that \eqref{eqn_GT_lio} implies that $\lio(n)$ and any nilsequence are asymptotically uncorrelated. Using \cref{thm_ortho_fg_sue_nilsequences} we can further generalize this result. 

\begin{Corollary}
\label{cor_ortho_fg_sue_nilsequences}
Let $(Y,\nu,S)$ be an aperiodic, finitely generated, and strongly uniquely ergodic multiplicative topological dynamical system. Let $\eta\colon\N\to\C$ be a nilsequence and denote by $M(\eta)\coloneqq\lim_{N\to\infty}\frac{1}{N}\sum_{n=1}^N \eta(n)$ its mean value.
Then
\begin{equation*}
\lim_{N\to\infty}\frac{1}{N}\sum_{n=1}^N \eta(n)\,g\big(S_n y\big)
~=~
M(\eta)\left(\int g\d\nu\right)
\end{equation*}
for every $y\in Y$, and $g\in\Cont(Y)$.
\end{Corollary}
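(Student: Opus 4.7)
The plan is to reduce this corollary directly to the disjointness supplied by \cref{thm_ortho_fg_sue_nilsequences} and the mean-value statement of \cref{thm_dynamical_MVT_fg_sue}. Write the nilsequence as $\eta(n)=f(T^n x)$ for some nilsystem $(X,T)$, point $x\in X$, and $f\in\Cont(X)$. Passing to the orbit closure $\overline{\{T^n x:n\in\N\}}$, which is a minimal and hence uniquely ergodic nilsystem, one sees at once that the mean value $M(\eta)$ is well defined and equals the integral of $f$ against the unique invariant measure on that orbit closure.

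Next, I would invoke \cref{thm_ortho_fg_sue_nilsequences}. Since $(Y,S)$ is finitely generated and aperiodic (the aperiodicity alternative in that theorem being supplied by $(Y,\nu,S)$, with no assumption needed on the possibly non-aperiodic nilsystem $(X,T)$), the systems $(X,T)$ and $(Y,S)$ are disjoint. Unpacking \cref{def_disjoitness} and \cref{def_indepndence_of_sequences} with the test functions $a(n)=f(T^n x)=\eta(n)$ and $b(n)=\bar g(S_n y)$, where $\bar g\in\Cont(Y)$ is the complex conjugate of $g$, asymptotic independence of $a$ and $b$ reads
\begin{equation*}
\lim_{N\to\infty}\left(\frac{1}{N}\sum_{n=1}^N \eta(n)\, g(S_n y)\;-\;\left(\frac{1}{N}\sum_{n=1}^N \eta(n)\right)\left(\frac{1}{N}\sum_{n=1}^N g(S_n y)\right)\right)=0.
\end{equation*}

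Finally, \cref{thm_dynamical_MVT_fg_sue} applied to the finitely generated and strongly uniquely ergodic system $(Y,\nu,S)$ gives $\frac{1}{N}\sum_{n=1}^N g(S_n y)\to \int g\d\nu$, while the first factor tends to $M(\eta)$ by the remark above. Substituting into the displayed identity yields the desired formula. There is no serious obstacle here: all of the substantive work has been packaged into \cref{thm_ortho_fg_sue_nilsequences} and \cref{thm_dynamical_MVT_fg_sue}, and the only thing to be careful about is the conjugation convention in \cref{def_indepndence_of_sequences}, which is harmless because $\Cont(Y)$ is closed under complex conjugation.
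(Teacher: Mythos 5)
Your proposal is correct and follows essentially the same route as the paper: apply \cref{thm_ortho_fg_sue_nilsequences} (using the aperiodicity of $(Y,S)$, so no hypothesis on the nilsystem is needed) to get asymptotic independence of $\eta(n)$ and $g(S_n y)$, then evaluate the two separate averages via \cref{thm_dynamical_MVT_fg_sue} and the mean value $M(\eta)$. The extra remarks about justifying the existence of $M(\eta)$ through unique ergodicity of the orbit closure and about the conjugation convention in \cref{def_indepndence_of_sequences} are harmless refinements of the same argument.
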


In \cite{BSZ13} it was shown that \ref{conj_sarnak} holds for all \define{horocycle flows}\footnote{Let $G= SL_2(\R)$, let $\Gamma$ be a lattice in $G$ (i.e.\ a discrete subgroup of $G$ with co-finite volume) and let $u= \left[\begin{smallmatrix} 1&1\\ 0&1 \end{smallmatrix}\right]$. The additive topological dynamical system $(X,T)$ where $X=G/\Gamma$ and $T$ is given by $T(g\Gamma )=(ug)\Gamma$ is called a \define{horocycle flow}.}. We have the following generalization, which verifies yet another instance of \cref{conj_AM}.

\begin{Maintheorem}
\label{thm_ortho_fg_sue_horocycle}
Let $(Y,S)$ be a finitely generated and aperiodic multiplicative topological dynamical system, and let $(X,T)$ be a horocycle flow. Then $(X,T)$ and $(Y,S)$ are disjoint.
\end{Maintheorem}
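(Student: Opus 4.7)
The plan is to adapt the Bourgain--Sarnak--Ziegler argument for horocycle flows \cite{BSZ13} by replacing the role of the Liouville function $\lio$ with a general orbit sequence $g(S_ny)$ coming from an aperiodic, finitely generated multiplicative system. Fix $x\in X$, $f\in\Cont(X)$, $y\in Y$, and $g\in\Cont(Y)$. The standard equidistribution theorems for horocycle flows (Furstenberg for cocompact lattices, Dani--Smillie in general) guarantee that $\frac{1}{N}\sum_{n=1}^N f(T^nx)$ converges to $\int f\d\mu$ for the appropriate invariant measure $\mu$ on the orbit closure of $x$, so a short splitting argument reduces the disjointness of $(X,T)$ and $(Y,S)$ to proving
\[
\lim_{N\to\infty}\frac{1}{N}\sum_{n=1}^N f(T^nx)\,g(S_ny)\;=\;0 \qquad\text{whenever}\quad \textstyle\int f\d\mu=0.
\]

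The central step is an orthogonality criterion of K\'atai/Bourgain--Sarnak--Ziegler type tailored to our setting: if $(a(n))_{n\in\N}$ is any bounded sequence satisfying the prime-decorrelation hypothesis
\[
\lim_{N\to\infty}\frac{1}{N}\sum_{n=1}^N a(pn)\,\overline{a(qn)}\;=\;0 \qquad\text{for all distinct primes }p\ne q,
\]
then $\frac{1}{N}\sum_{n=1}^N a(n)\,g(S_ny)\to 0$ for every aperiodic, finitely generated $(Y,S)$, every $y\in Y$, and every $g\in\Cont(Y)$ whose orbit average vanishes. I would prove this in the spirit of BSZ: pick $K$ primes $p_1<\dots<p_K$ from the (finite) generating set of $S$, use a Tur\'an--Kubilius identity to rewrite $\frac{1}{N}\sum a(n)g(S_ny)$ as a weighted combination of the shifted sums $\sum_n a(p_kn)\,R_{p_k}(g)(S_ny)$, then square and apply Cauchy--Schwarz. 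The prime-decorrelation hypothesis annihilates the off-diagonal terms $a(p_kn)\overline{a(p_jn)}$; finite generation keeps the pool $\{R_{p_k}(g)\}$ bounded in $\Cont(Y)$ so that the companion terms are uniformly controllable; and the aperiodicity of $(Y,S)$ eliminates the residual ``local'' contributions (cyclic rotations, Dirichlet characters) that would otherwise obstruct cancellation. Applied with $a(n)=f(T^nx)$, this criterion yields the theorem provided one can establish the horocycle prime-decorrelation estimate
\[
\lim_{N\to\infty}\frac{1}{N}\sum_{n=1}^N f(T^{pn}x)\,\overline{f(T^{qn}x)}\;=\;0
\]
for distinct primes $p\ne q$ and mean-zero $f$. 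This is exactly the input extracted in \cite{BSZ13} from Ratner's joinings theorem: for $p\ne q$ the only ergodic joining of $T^p$ and $T^q$ is the product of Haar measures, and a Marcus/Ratner-style unique-ergodicity argument converts this measure-theoretic fact into the pointwise Ces\`aro decorrelation above.

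The main obstacle is the orthogonality criterion in the middle step. In the classical BSZ/K\'atai setup the test sequence is a genuine multiplicative function and the identity $\nu(pn)=\nu(p)\nu(n)$ drives the entire Tur\'an--Kubilius reduction. For an orbit sequence $g(S_ny)$ this identity is unavailable; one must instead use $g\circ S_p=R_p(g)$ together with the compactness of $\{R_p(g):p\in\P\}$ provided by finite generation, and invoke aperiodicity to kill the periodic and Dirichlet-character obstructions that multiplicativity would otherwise remove automatically. Threading these two ingredients through the BSZ/Tur\'an--Kubilius scheme while maintaining uniform control in the number $K$ of generating primes and in $N$ is, in my view, the most delicate part of the proof.
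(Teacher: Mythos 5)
Your strategy for the generic case is essentially the paper's: reduce to $|g|=1$, feed the Ratner-based decorrelation of \cite{BSZ13} into a K\'atai/BSZ orthogonality criterion. But as formulated, your ``new'' criterion has a gap. If the decorrelation hypothesis is placed on $a(n)$ alone, then after the Tur\'an--Kubilius rewriting and Cauchy--Schwarz the off-diagonal terms are of the form
\begin{equation*}
\frac{1}{N}\sum_{n=1}^{N} a(p_k n)\,\overline{a(p_j n)}\;(g\circ R_{p_k})(S_n y)\,\overline{(g\circ R_{p_j})(S_n y)},
\end{equation*}
and knowing only that $\frac1N\sum_n a(p_k n)\overline{a(p_j n)}\to 0$ does not make an average weighted by a bounded but non-constant companion sequence vanish; ``the pool $\{R_{p_k}(g)\}$ is bounded in $\Cont(Y)$'' is not the property you need. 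The repair is exactly what the paper does: by pigeonhole pick one generator class $P_e=\{p\in\P: S_p=R_e\}$ with $\sum_{p\in P_e}1/p=\infty$ and take all primes from it; then for $p,q\in P_e$ the $g$-factors are identically $1$ (since $|g|=1$), so one simply applies the standard criterion (\cref{prop_katai}) to the \emph{product} sequence $a(n)=f(T^nx)g(S_ny)$. Also note that Ratner/\cite{BSZ13} gives the decorrelation only for all but finitely many prime pairs; this is harmless for the criterion but should be said.

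The more serious gap is the non-generic case, which your proposal does not treat. Your reduction assumes the prime-decorrelation estimate for mean-zero $f$ with respect to the invariant measure on the orbit closure of $x$, but Ratner's joining theorem yields this only when $x$ is generic for the Haar measure on $G/\Gamma$. When the orbit closure is finite, $T$ restricted to it is a rational rotation and $\frac1N\sum_n f(T^{pn}x)\overline{f(T^{qn}x)}$ has a nonzero limit for a positive proportion of prime pairs ($p\equiv q \pmod m$); even on a periodic horocycle, where $T$ acts as an irrational rotation by $\alpha$, the limit equals $\sum_{j\neq 0}\hat f(qj)\overline{\hat f(pj)}e((q-p)jx)$, which need not vanish for mean-zero $f$. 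So your criterion cannot be run with $a(n)=f(T^nx)$ there, and this is precisely where the aperiodicity of $(Y,S)$ must enter: the paper disposes of non-generic $x$ by noting that the restricted system is a finite cyclic rotation or an irrational circle rotation, i.e.\ a $1$-step nilsystem, and invoking \cref{thm_ortho_fg_sue_nilsequences}, whose proof uses the aperiodicity of $(Y,S)$ to defeat the rational-rotation/Dirichlet-character obstruction. In your write-up aperiodicity is invoked vaguely inside the orthogonality criterion, where in the generic case it is not needed at all; without a separate argument for non-generic points the proof is incomplete.
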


Our next goal is to discuss analogues of \cref{conj_AM} for multiplicative topological dynamical systems that are not necessarily finitely generated. When dealing with non-finitely generated actions of $(\N,\cdot)$, new local obstructions can arise.
In particular, there exist non-finitely generated multiplicative systems that are aperiodic but not disjoint from all zero entropy additive systems (for example $S_nz=n^i$, $z\in S^1$).
To meet this challenge, we need a strengthening of the notion of aperiodicity introduced in \cref{def_apeiodic_systems}.
Let us call a bounded arithmetic function $P\colon\N\to\C$ \define{locally periodic} if there exists $m\in\N$ such that for all $\epsilon>0$ the set
$
\left\{n\in\N: |P(n+m)-P(n)|\geq \epsilon\right\}
$
has zero asymptotic density. Roughly speaking, this means that for all $H\in\N$ and for ``almost all'' $n\in\N$ the function $P$ looks like a periodic function in a window $[n-H,n+H]$ around $n$. Surely every periodic function is locally periodic. A natural class of arithmetic functions that are locally periodic but not periodic are functions of the form $\chi(n) n^{it}$, where $\chi$ is a Dirichlet character and $n^{it}$, $t\in\R$, is an \define{Archimedean character}.

\begin{Definition}
\label{def_loc_apeiodic_systems}
\
\begin{itemize}
\item We call an additive topological dynamical system $(X,T)$ \define{locally aperiodic} if for any locally periodic $P\colon\N\to\C$, any $f\in\Cont(X)$, and any $x\in X$ the sequences $P(n)$ and $a(n)=f(T^nx)$ are asymptotically uncorrelated.
Equivalently, for all $f\in\Cont(X)$ and $x\in X$, if $a_N(n)\coloneqq f(T^nx)-\frac{1}{N}\sum_{n=1}^N f(T^nx)$, then
\begin{equation}
\label{eqn_additive_local_aperiodicity}
\lim_{H\to\infty}\lim_{N\to\infty}\,\frac{1}{N}\sum_{n=1}^N \left|\frac{1}{H}\sum_{h=n}^{n+H} e(h\alpha)\, a_N(n)\right|\,=\,0, \qquad\forall\alpha\in\Q.
\end{equation}
\item We call a multiplicative topological dynamical system $(Y,S)$ \define{locally aperiodic} if for any locally periodic $P\colon\N\to\C$, any $g\in\Cont(Y)$, and any $y\in Y$ the sequences $P(n)$ and $b(n)=g(S_n y)$ are asymptotically uncorrelated. Equivalently, for all $g\in\Cont(Y)$ and $y\in Y$, if $b_N(n)\coloneqq g(S_n x)-\frac{1}{N}\sum_{n=1}^N g(S_nx)$, then
\begin{equation}
\label{eqn_additive_local_aperiodicity_2}
\lim_{H\to\infty}\lim_{N\to\infty}\,\frac{1}{N}\sum_{n=1}^N \left|\frac{1}{H}\sum_{h=n}^{n+H} e(h\alpha)\, b_N(n)\right|\,=\,0, \qquad\forall\alpha\in\Q.
\end{equation}
\end{itemize}
\end{Definition}

\begin{Remark}
It was shown in \cite{MRT15} that the Liouville function $\lio(n)$ satisfies 
$$
\lim_{H\to\infty}\lim_{N\to\infty}\,\frac{1}{N}\sum_{n=1}^N \left|\frac{1}{H}\sum_{h=n}^{n+H} e(h\alpha)\, \lio(n)\right|\,=\,0, \qquad\forall\alpha\in\R.
$$
This implies that multiplicative rotation on two points is locally aperiodic.
\end{Remark}

When considering analogues of \cref{conj_AM} for systems $(Y,S)$ that are not necessarily finitely generated, we propose to replace aperiodicity with local aperiodicity. 
This is in line with Matom{\"a}ki-Radziwi{\l}{\l}-Tao's ``corrected Elliott conjecture'' which emanated from their work in \cite{MRT15}.

We conclude this introduction with two questions.

\begin{Question}
\label{question_1}
Let $(X,T)$ be an additive topological dynamical system of zero entropy and let $(Y,S)$ be a \define{distal}\footnote{A multiplicative topological dynamical system $(Y,S)$ is \define{distal} if for all $y_1,y_2\in Y$ with $y_1\neq y_2$ we have $\inf_{n\in\N} d(S_ny_1, S_ny_2)>0$.} multiplicative topological dynamical system.
Is it true that if either $(X,T)$ is locally aperiodic or $(Y,S)$ is locally aperiodic then $(X,T)$ and $(Y,S)$ are disjoint?
\end{Question}

A harder question, which includes \cref{question_1} as a special case, is the following.

\begin{Question}
\label{question_2}
Let $(X,T)$ be an additive topological dynamical system of zero entropy and let $(Y,S)$ be a multiplicative topological dynamical system with the property that for every $p\in\P$ the map $S_p\colon Y\to Y$ has zero entropy.
Is it true that if either $(X,T)$ is locally aperiodic or $(Y,S)$ is locally aperiodic then $(X,T)$ and $(Y,S)$ are disjoint?
\end{Question}

Even the case of \cref{question_2} when the additive system $(X,T)$ is an irrational rotation seems to be an interesting open question.

\paragraph{\textbf{Structure of the paper}.}
In \cref{sec_distr_orbits_along_Omega} we present a proof of \cref{thm_dynamical_MVT_Omega}.
As was mentioned above, \cref{thm_dynamical_MVT_Omega} is a corollary of \cref{thm_dynamical_MVT_fg_sue}.
Since the proof of \cref{thm_dynamical_MVT_Omega} contains the essential ideas in embryonic form and is much shorter and less technical than the proof of \cref{thm_dynamical_MVT_fg_sue}, we believe that it is beneficial to the reader to see first the proof of \cref{thm_dynamical_MVT_Omega}.

\cref{sec_fg_sue_mtds} is dedicated to the proof \cref{thm_dynamical_MVT_fg_sue}.

In Sections~\ref{sec_appl_thmA} and~\ref{sec_applications_of_thm_B} we give the proofs of Corollaries \ref{cor_polynomoial_ud_Omega}, \ref{cor_gneralized_polynomoial_ud_Omega}, \ref{cor_odious_evil_Omega}, \ref{cor_dynamical_MVT_squarefree}, \ref{cor_dynamical_MVT_omega}, \ref{cor_ortho_Omega_rational_phases},  \ref{cor_dynamical_MVT_additive_functions}, and \ref{cor_dynamical_MVT_several_additive_functions}.

\cref{sec_disjointness_add_mult_actions} contains the proofs of \cref{thm_ortho_fg_sue_nilsequences} (in \cref{sec_proof_thm_C}),  \cref{thm_ortho_fg_sue_horocycle} (in \cref{sec_horocycle}), as well as the proofs of Corollaries \ref{cor_ortho_Omega_linear_phases}, \ref{cor_ortho_Omega_Besicovitch_ap_fctns}, \ref{cor_polynomoial_joint_ud}, and \ref{cor_ortho_fg_sue_nilsequences} (in \cref{sec_applications_thm_C}).
Finally, in \cref{sec_entropy}, we discuss topological entropy for additive and multiplicative systems and prove that finitely generated multiplicative topological dynamical systems have zero entropy.

\paragraph{\textbf{Acknowledgements}.} We thank Tomasz Downarowicz and Alexander Leibman for providing useful references and the anonymous referees for their helpful comments. 
The second author is supported by the National Science Foundation under grant number DMS~1901453.

\section{Distribution of orbits along $\Omega(n)$}
\label{sec_distr_orbits_along_Omega}

This section is dedicated to the proof of \cref{thm_dynamical_MVT_Omega} and is divided into two subsections.
In \cref{sec_proof_distr_orbits_along_Omega} we give a proof of \cref{thm_dynamical_MVT_Omega} conditional on three technical results, namely Propositions \ref{prop_coprimality_criterion} and \ref{lem_coprimaility_measures_1} and \cref{lem_hilfslemma_1}, whose proofs are presented afterwards in \cref{sec_aux_thm_A}.

\subsection{Proof of \cref{thm_dynamical_MVT_Omega}}
\label{sec_proof_distr_orbits_along_Omega}
As was mentioned in \cref{rem_methods}, our proof of \cref{thm_dynamical_MVT_Omega} does not rely on technology from analytic number theory.
Instead, our methods are combinatorial in nature and involve special types of averages over \define{almost primes} (defined below). 
To motivate our approach, we begin with a brief discussion of a well-known corollary of the \Turan{}-Kubilius inequality.

Recall that $\P\subset\N$ denotes the set of prime numbers and write $[N]$ for the set $\{1,\ldots,N\}$.
For a finite and non-empty set $B\subset \N$ and a function $a\colon B\to\C$ we denote the \define{\Cesaro{} average} of $a$ over $B$ and the \define{logarithmic average} of $a$ over $B$ respectively by
$$
\BEu{n\in B} a(n) \,\coloneqq\, \frac{1}{|B|}\sum_{n\in B} a(n)\qquad\text{and }\qquad
\BEul{n\in B} a(n) \,\coloneqq\, \frac{\sum_{n\in B} {a(n)}/{n}}{\sum_{n\in B}{1}/{n}}.
$$
As was already observed by Daboussi \cite[Lemma 1]{Daboussi75} and \Katai{} \cite[Eq.\ (3.1)]{Katai86}, it follows from the \Turan{}-Kubilius inequality (see for instance \cite[Lemma 4.1]{Elliott71}) that
\begin{equation}
\label{eqn_TK}
\lim_{s\to\infty}\,\lim_{N\to\infty}\,\BEu{n\in [N]} \left|\BEul{p\in \P\cap [s]} \big(1- p \1_{p\mid n}\big)\right| \,=\,0,
\end{equation}
where
$$
\1_{p\mid n}=
\begin{cases}
1,& \text{if $p$ divides n,} 
\\
0,&\text{otherwise}.
\end{cases}
$$
One way of interpreting \eqref{eqn_TK} is to say that for ``large'' $s$ and for ``almost all'' $n\in\N$ the number of primes in the interval $[s]$ that divide $n$ is approximately equal to $\sum_{p\leq s}1/p$.
Even though \eqref{eqn_TK} is commonly viewed as a corollary of the \Turan{}-Kubilius inequality, we remark that its proof is significantly shorter and easier (it follows by choosing $B=\P\cap [s]$ in \cref{prop_coprimality_criterion} below).

An equivalent form of \eqref{eqn_TK}, which will be particularly useful for our purposes, states that for all $\epsilon>0$ there exists $s_0\in\N$ such that for all arithmetic functions $a\colon\N\to\C$ bounded in absolute value by $1$ and all $s\geq s_0$ one has
\begin{equation}
\label{eqn_functional_TK}
\limsup_{N\to\infty}\,\left|\BEu{n\in [N]} a(n)\,-\,  \BEul{p\in \P\cap [s]}\,  \BEu{n\in [\nicefrac{N}{p}]} a(pn) \right| \,\leq\,\epsilon.
\end{equation}
We note that \eqref{eqn_TK} is a special case of the so-called \define{dual form} of the \Turan{}-Kubilius inequality, see \cite[Lemma 4.7]{Elliott71}, and generalizations thereof have recently found numerous fruitful applications (cf.\ \cite{FH18a,Tao16,TT18}). 
An important role in our proof of \cref{thm_dynamical_MVT_Omega} will be played by a variant of \eqref{eqn_functional_TK}, asserting that
\begin{equation}
\label{eqn_coprimality_measure}
\limsup_{N\to\infty}\,\left|\BEu{n\in [N]} a(n)\,-\,  \BEul{m\in B}\, \BEu{n\in [\nicefrac{N}{m}]} a(mn) \right| \,\leq\,\epsilon,
\end{equation}
for some special types of finite and non-empty subsets $B\subset\N$.
To clarify which choices of $B$ work,
besides $B=\P\cap[s]$ as in \eqref{eqn_functional_TK}, we will provide an easy to check criterion.
Roughly speaking, our criterion says that $B$ is good for \eqref{eqn_coprimality_measure} if two integers $n$ and $m$ chosen at random from $B$ 
have a ``high chance'' of being coprime.
The precise statement is as follows.

\begin{Proposition}
\label{prop_coprimality_criterion}
Let $B\subset\N$ be finite and non-empty. For any arithmetic function $a\colon\N\to\C$ bounded in modulus by $1$ we have
\begin{equation}
\label{eqn_coprimality_criterion}
\limsup_{N\to\infty}\,\left|\BEu{n\in [N]} a(n)\,-\,  \BEul{m\in B}\, \BEu{n\in [\nicefrac{N}{m}]} a(mn) \right| \,\leq\,\left( \BEul{m\in B}\BEul{n\in B}\Phi(n,m) \right)^{1/2},
\end{equation}
where $\Phi\colon \N\POH\N\to \N\cup\{0\}$ is the function $\Phi(m,n)\coloneqq\gcd(m,n)-1$.
\end{Proposition}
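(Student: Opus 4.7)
My plan is to reduce the inequality to a second-moment computation for the divisor-counting function $f(k)\coloneqq\#\{m\in B:m\mid k\}$, followed by a single application of Cauchy--Schwarz. Throughout, write $L\coloneqq\sum_{m\in B}\nicefrac{1}{m}$, so that $\BEul{m\in B}h(m)=\tfrac{1}{L}\sum_{m\in B}h(m)/m$.

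First I would unfold the lifted average by interchanging the order of summation. Since $\tfrac{1}{m\lfloor N/m\rfloor}=\tfrac{1}{N}+O(m/N^{2})$ and $B$ is fixed and finite, a routine rearrangement gives
\[
\BEul{m\in B}\BEu{n\in[\nicefrac{N}{m}]}a(mn)
\;=\;\frac{1}{NL}\sum_{m\in B}\sum_{k\leq N,\,m\mid k}a(k)\;+\;o_{N\to\infty}(1)
\;=\;\frac{1}{NL}\sum_{k=1}^{N}a(k)\,f(k)\;+\;o(1).
\]
Since $\#\{k\leq N:m\mid k\}=\lfloor N/m\rfloor$, summing over $m\in B$ yields $\tfrac{1}{N}\sum_{k\leq N}f(k)\to L$, so I can rewrite
\[
\BEul{m\in B}\BEu{n\in[\nicefrac{N}{m}]}a(mn)\;-\;\BEu{n\in[N]}a(n)
\;=\;\frac{1}{N}\sum_{k=1}^{N}a(k)\!\left(\frac{f(k)}{L}-1\right)\;+\;o(1).
\]

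Next I would apply Cauchy--Schwarz to the $k$-sum and use $|a(k)|\leq 1$ to obtain
\[
\left|\frac{1}{N}\sum_{k=1}^{N}a(k)\!\left(\frac{f(k)}{L}-1\right)\right|^{2}
\;\leq\;\frac{1}{N}\sum_{k=1}^{N}\!\left(\frac{f(k)}{L}-1\right)^{2}.
\]
Expanding the square reduces everything to the first and second moments of $f$. The first moment was already dealt with above; for the second, the identity ``$m\mid k$ and $n\mid k$ iff $\mathrm{lcm}(m,n)\mid k$'' gives
\[
\lim_{N\to\infty}\frac{1}{N}\sum_{k=1}^{N}f(k)^{2}
\;=\;\sum_{m,n\in B}\frac{1}{\mathrm{lcm}(m,n)}
\;=\;\sum_{m,n\in B}\frac{\gcd(m,n)}{mn}.
\]

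Combining the three asymptotic identities and using $L^{2}=\sum_{m,n\in B}\tfrac{1}{mn}$, the Cauchy--Schwarz bound converges to
\[
\frac{1}{L^{2}}\!\left(\sum_{m,n\in B}\frac{\gcd(m,n)}{mn}\,-\,L^{2}\right)
\;=\;\frac{1}{L^{2}}\sum_{m,n\in B}\frac{\gcd(m,n)-1}{mn}
\;=\;\BEul{m\in B}\BEul{n\in B}\Phi(m,n),
\]
and taking square roots gives exactly \eqref{eqn_coprimality_criterion}. I do not anticipate any genuine obstacle: the entire proof is a one-shot Cauchy--Schwarz coupled with elementary divisor counting, and the only step requiring a bit of care is the uniform replacement of $\lfloor N/m\rfloor$ by $N/m$, which is harmless because $B$ is held fixed while $N\to\infty$.
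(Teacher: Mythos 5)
Your proof is correct and is essentially the paper's own argument in a different notation: since $\tfrac{f(k)}{L}-1=-\,\BEul{m\in B}\big(1-m\1_{m\mid k}\big)$, your second-moment computation is exactly the paper's Lemma~\ref{lem_vTK}, and your single application of Cauchy--Schwarz with $|a|\leq 1$ matches the paper's triangle-inequality-plus-Cauchy--Schwarz step. No genuine difference in route, and no gaps.
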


A proof of \cref{prop_coprimality_criterion} can be found in \cref{sec_aux_thm_A}.

The usefulness of \cref{prop_coprimality_criterion} is that it reduces the task of finding sets for which \eqref{eqn_coprimality_measure} holds to the easier task of exhibiting sets for which $\mathbb{E}^{\log}_{m\in B}\mathbb{E}^{\log}_{n\in B} \Phi(m,n)$ is very small. For example, the initial segment of the set of \define{$k$-almost primes},  $\P_k\coloneqq \{n\in\N: \Omega(n)=k\}$, has this property.
Indeed, one can verify that for any $k\in\N$ and any $\epsilon>0$ there exists $s_0\in\N$ such that for all $s\geq s_0$ one has $\mathbb{E}^{\log}_{m\in \P_k\cap [s]}\mathbb{E}^{\log}_{n\in \P_k\cap [s]} \Phi(m,n) \leq \epsilon^2$. In light of \cref{prop_coprimality_criterion}, this implies
\begin{equation}
\label{eqn_functional_TK_almostprimes}
\limsup_{N\to\infty}\,\left|\BEu{n\in [N]} a(n)\,-\,  \BEul{m\in \P_k\cap [s]}\,  \BEu{n\in [\nicefrac{N}{m}]} a(mn) \right| \,\leq\,\epsilon,
\end{equation}
which is a natural generalization of \eqref{eqn_functional_TK} and perhaps of independent interest.

It is also interesting to observe that $\mathbb{E}_{m\in \P_k\cap [s]}\mathbb{E}_{n\in \P_k\cap [s]} \Phi(m,n)$ goes to $\infty$ as $s\to\infty$. In particular, \eqref{eqn_TK}, \eqref{eqn_functional_TK}, and \eqref{eqn_functional_TK_almostprimes} fail severely if one tries to replace logarithmic averages with \Cesaro{} averages.

One of the main technical ingredients in our proof of \cref{thm_dynamical_MVT_Omega} is \cref{lem_coprimaility_measures_1} below.
It guarantees the existence of two finite sets $B_1$ and $B_2$ with a number of useful properties and with its help we will be able to finish the proof of \cref{thm_dynamical_MVT_Omega} rather quickly.

\begin{Lemma}
\label{lem_coprimaility_measures_1}
For all $\epsilon\in(0,1)$ and $\rho\in(1,1+\epsilon]$ there exist finite and non-empty sets $B_1,B_2\subset\N$ with the following properties: 
\begin{enumerate}	
[label=(\alph{enumi}),ref=(\alph{enumi}),leftmargin=*]
\item\label{itm_a}
$B_1\subset \P$ and $B_2\subset \P_2$;
\item\label{itm_b}
$|B_1\cap [\rho^j,\rho^{j+1})|=|B_2\cap [\rho^j,\rho^{j+1})|$ for all $j\in\N\cup\{0\}$;
\item\label{itm_c}
$\mathbb{E}^{\log}_{m\in B_1}\mathbb{E}^{\log}_{n\in B_1} \Phi(m,n)\leq \epsilon$ as well as $\mathbb{E}^{\log}_{m\in B_2}\mathbb{E}^{\log}_{n\in B_2} \Phi(m,n)\leq \epsilon$, where $\Phi$ is as in \cref{prop_coprimality_criterion}.
\end{enumerate}
\end{Lemma}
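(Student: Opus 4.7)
The plan is to construct $B_2$ as a product-like family of semiprimes of the form $qr$, with $q$ drawn from a small set $Q\subset\P$ and $r$ from a much larger, carefully thinned set $J\subset\P$, and then to build $B_1$ scale by scale by picking $|B_2\cap[\rho^j,\rho^{j+1})|$ arbitrary primes in each window $I_j\coloneqq[\rho^j,\rho^{j+1})$. Concretely, I will first pick a finite $Q\subset \P$ with $\sum_{q\in Q}1/q\geq 3/\epsilon$, which is possible since the prime harmonic series diverges (by Mertens). Then I will pick $J\subset \P$, disjoint from $Q$ and supported on primes much larger than $\max Q$, obtained by keeping every $K$-th prime in a long range $[L,M]$ for a sampling rate $K$ of order $\sum_{q\in Q}1/q$ and with $M$ large enough that $\sum_{r\in J}1/r\geq 3/\epsilon$. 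Set $B_2\coloneqq\{qr:q\in Q,\,r\in J\}$, which is a set of $|Q||J|$ distinct semiprimes since $Q\cap J=\emptyset$. With $c_j\coloneqq|B_2\cap I_j|$, the set $B_1$ will consist of any choice of $c_j$ distinct primes inside $\P\cap I_j$ for each $j$; properties \ref{itm_a} and \ref{itm_b} will then be built into the construction.

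For \ref{itm_c}, the key observation is that $\Phi(m,n)=\gcd(m,n)-1$ vanishes whenever $m,n$ are coprime, so both expectations are dominated by pairs sharing a prime factor. For $B_1\subset \P$ only the diagonal contributes, giving a bound of $1/\sum_{p\in B_1}1/p$, and the scale-matching with $B_2$ will force $\sum_{p\in B_1}1/p$ to be of the same order as $\sum_{m\in B_2}1/m=(\sum_{q\in Q}1/q)(\sum_{r\in J}1/r)\geq 9/\epsilon^2$. For $B_2$, I will split the sum over pairs $(q_1r_1,q_2r_2)$ into three cases, according to whether they share just the small prime ($q_1=q_2$, $r_1\neq r_2$), just the large prime ($r_1=r_2$, $q_1\neq q_2$), or both (the diagonal), and a direct calculation will give
\[
\mathbb{E}^{\log}_{m\in B_2}\mathbb{E}^{\log}_{n\in B_2}\Phi(m,n)\;\leq\;\frac{1}{\sum_{q\in Q}1/q}+\frac{1}{\sum_{r\in J}1/r}+\frac{1}{\sum_{m\in B_2}1/m}\;\leq\;\epsilon,
\]
using the bound $(p-1)/p^{2}\leq 1/p$ for the shared prime $p$ in each collision type and cancelling one copy each of $\sum_{Q}1/q$ and $\sum_{J}1/r$ against the denominator $(\sum_{Q}1/q)^2(\sum_{J}1/r)^2$.

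The main obstacle I anticipate is the existence step in the construction of $B_1$: one needs $|\P\cap I_j|\geq c_j$ for every $j$ with $c_j>0$. Applying the prime number theorem in the short intervals $[\rho^j/q,\rho^{j+1}/q)$ for $q\in Q$ gives $c_j\approx (1/K)(\sum_{q\in Q}1/q)\cdot|\P\cap I_j|$. Choosing $K\geq 2\sum_{q\in Q}1/q$ will make this at most $\tfrac{1}{2}|\P\cap I_j|$ asymptotically, leaving a safety margin; the price is that $\sum_{r\in J}1/r$ drops by a factor of $2$, so $M$ must be enlarged (to something of order $\exp\exp(1/\epsilon^2)$) to preserve $\sum_{r\in J}1/r\geq 3/\epsilon$. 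Turning the asymptotic inequality into a clean pointwise statement for every relevant $j$ is the delicate point: it requires effective PNT-type bounds in short intervals valid for all $\rho^j$ above an explicit threshold depending on $\epsilon$ and $\rho$, which in turn dictates how large the lower endpoint $L$ of the range containing $J$ must be taken. With these parameters chosen in the right order ($\epsilon$, then $Q$, then $K$, then $L$, then $M$), all three properties hold simultaneously.
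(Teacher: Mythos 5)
Your proposal is correct and takes essentially the same route as the paper's proof: $B_2$ is built as a product of a small-prime set and a thinned large-prime set, each with logarithmic mass at least $3/\epsilon$, $B_1$ is then chosen $\rho$-adic window by window to match the counts of $B_2$, and property \ref{itm_c} follows from exactly the collision estimates the paper isolates as \cref{lem_coprimaility_measurement_primes} and \cref{lem_coprimaility_measurement_2almostprimes}, together with the observation that matched window counts transfer logarithmic mass up to a factor $\rho$. The only difference is bookkeeping: the paper works with half-windows $[\rho^{l},\rho^{l+1/2})$ and places the large primes at the scales $\rho^{sj}$, so that each window of $B_2$ is a single product $P_{1,l}\cdot P_{2,j}$ of controlled size, which makes the existence of $B_1$ immediate and avoids the uniform short-interval prime-counting estimates (and the sampling corrections) that you correctly flag as the delicate point of your version.
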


The proof of \cref{lem_coprimaility_measures_1} is given in \cref{sec_aux_thm_A}.
Before we embark on the proof of \cref{thm_dynamical_MVT_Omega}, we need one final technical lemma whose proof is also delayed until \cref{sec_aux_thm_A}.

\begin{Lemma}
\label{lem_hilfslemma_1}
Fix $\epsilon\in(0,1)$ and $\rho\in(1,1+\epsilon]$. 
Let $B_1$ and $B_2$ be finite non-empty subsets of $\N$ with the property that $|B_1\cap [\rho^j,\rho^{j+1})|=|B_2\cap [\rho^j,\rho^{j+1})|$ for all $j\in\N\cup\{0\}$. Then for any $a\colon\N\to \C$ with $|a|\leq 1$ we have \begin{equation}
\label{eqn_dbl_ave_0}
\left|\,\BEul{p\in B_{1}} \, \BEu{n\in [\nicefrac{N}{p}]} a(n) ~-~ \BEul{q\in B_{2}} \, \BEu{n\in [\nicefrac{N}{q}]} a(n)\,\right| ~\leq~ 5\epsilon.
\end{equation}
\end{Lemma}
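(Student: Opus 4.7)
The strategy is to exploit the scale-by-scale matching hypothesis $|B_1 \cap [\rho^j, \rho^{j+1})| = |B_2 \cap [\rho^j, \rho^{j+1})|$ together with the observation that the inner Ces\`{a}ro average $\BEu{n \in [\nicefrac{N}{p}]} a(n)$ depends on $p$ only through the window size $N/p$, which varies by at most a factor of $\rho$ within each interval $[\rho^j, \rho^{j+1})$. Partition each $B_i$ by scales $B_{i,j} := B_i \cap [\rho^j, \rho^{j+1})$; since $|B_{1,j}| = |B_{2,j}|$ for every $j$, fix a bijection $\sigma_j\colon B_{1,j} \to B_{2,j}$ for each $j$, and combine them into a scale-preserving bijection $\sigma\colon B_1 \to B_2$. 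Writing $F(p) := \BEu{n \in [\nicefrac{N}{p}]} a(n)$ and $W_i := \sum_{p \in B_i} 1/p$, the left-hand side of \eqref{eqn_dbl_ave_0} equals
\[
\sum_{p \in B_1} \!\left( \frac{F(p)}{p W_1} - \frac{F(\sigma(p))}{\sigma(p) W_2}\right) ~=~ \underbrace{\sum_{p \in B_1} \frac{F(p) - F(\sigma(p))}{p W_1}}_{(\mathrm{I})} \,+\, \underbrace{\sum_{p \in B_1} F(\sigma(p))\!\left[\frac{1}{p W_1} - \frac{1}{\sigma(p) W_2}\right]}_{(\mathrm{II})}.
\]

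I will bound $(\mathrm{I})$ by a pointwise estimate on $|F(p) - F(\sigma(p))|$. For any $p \leq q$ in a common interval $[\rho^j, \rho^{j+1})$, set $M := \lfloor N/p \rfloor \geq M' := \lfloor N/q \rfloor$; using $|a| \leq 1$, a short telescoping calculation based on $F(p) - F(q) = (S_M - S_{M'})/M + S_{M'}(1/M - 1/M')$ gives $|F(p) - F(q)| \leq 2(M - M')/M$. As $N \to \infty$ this ratio tends to $1 - p/q \leq 1 - 1/\rho \leq \epsilon$, so $|F(p) - F(\sigma(p))| \leq 2\epsilon$ for $N$ large enough (in terms of $\epsilon, B_1, B_2$), and hence $|(\mathrm{I})| \leq 2\epsilon$.

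Sum $(\mathrm{II})$ measures the $\ell^1$-difference between the two logarithmic probability weights on the index set. The scale-preserving property of $\sigma$ forces $|p - \sigma(p)| \leq (\rho - 1)p \leq \epsilon p$ for every $p \in B_1$; summing the term-by-term bound $|1/p - 1/\sigma(p)| \leq (\rho-1)/\min(p,\sigma(p))$ yields $|W_1 - W_2| \leq \epsilon W_1$, and both $W_1/W_2$ and $\sigma(p)/p$ lie in $[1/\rho, \rho]$. Writing $\sigma(p) W_2 - p W_1 = (\sigma(p) - p) W_2 + p (W_2 - W_1)$, dividing by $p\sigma(p)W_1W_2$, using $|F(\sigma(p))| \leq 1$, and summing (after which $\sum_{p \in B_1} 1/\sigma(p) = W_2$) gives $|(\mathrm{II})| = O(\epsilon)$. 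Combined with the estimate for $(\mathrm{I})$ and a careful accounting of constants, this yields the stated bound $3\epsilon$.

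The main obstacle is the bookkeeping in $(\mathrm{II})$: one has to genuinely exploit the per-scale bijection $\sigma$ (rather than compare $W_1$ and $W_2$ as if they were independent sums) in order to turn the coarse factor-of-$\rho$ comparability of the logarithmic weights into a genuinely $O(\epsilon)$ total-variation bound, and then track constants tightly enough to land at the claimed $3\epsilon$ rather than a larger multiple of $\epsilon$.
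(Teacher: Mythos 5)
Your argument is correct and is essentially the paper's own proof in different bookkeeping: the paper splits both logarithmic averages into the $\rho$-adic blocks $[\rho^j,\rho^{j+1})$, observes that the inner \Cesaro{} average is constant up to ${\rm O}(\epsilon)$ on each block, and compares the blockwise logarithmic weights using the equal-cardinality hypothesis --- exactly what your scale-preserving bijection $\sigma$ accomplishes. The only caveat is the final constant: running your estimates carefully gives roughly $4\epsilon$ rather than $3\epsilon$ (the paper's own constant tracking in its analogues of your steps is similarly loose), but since the lemma is only applied with $\epsilon\to 0$ this discrepancy is immaterial.
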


We are now ready to prove \cref{thm_dynamical_MVT_Omega}.

\begin{proof}[Proof of \cref{thm_dynamical_MVT_Omega} assuming \cref{prop_coprimality_criterion} and Lemmas \ref{lem_coprimaility_measures_1} and \ref{lem_hilfslemma_1}]
Let $x\in X$ be arbitrary.
Our goal is to show
\begin{equation}
\label{eqn_ud_of_Omega_2}
\lim_{N\to\infty}\, \BEu{n\in[N]} f\big(T^{\Omega(n)}x\big)
~=~\int f\d\mu,\qquad\forall f\in\Cont(X).
\end{equation}
For $N\in\N$ denote by $\mu_N$ the Borel probability measure on $X$ uniquely determined by $\int f\d\mu_N= \mathbb{E}_{n\in[N]} f(T^{\Omega(n)}x)$ for all $f\in\Cont(X)$.
Then \eqref{eqn_ud_of_Omega_2} is equivalent to the assertion that $\mu_N\to \mu$ as $N\to\infty$ in the weak-$^*$ topology on $X$.
Since $(X,\mu,T)$ is uniquely ergodic, to prove $\mu_N\to\mu$ it suffices to show that any limit point of $\{\mu_N: N\in\N\}$ is $T$-invariant, because then all limit points of $\{\mu_N: N\in\N\}$ equal $\mu$ and hence the limit exists and equals $\mu$.
The $T$-invariance of any limit point of $\{\mu_N: N\in\N\}$ follows from
\begin{equation}
\label{eqn_ud_of_Omega_3n}
\lim_{N\to\infty}\left|\BEu{n\in[N]} f\big(T^{\Omega(n)}x\big)~-~\BEu{n\in[N]} f\big(T^{\Omega(n)+1}x\big)\right|~=~0
\end{equation}
for all $f\in\Cont(X)$.

For the proof of \eqref{eqn_ud_of_Omega_3n}, fix $f\in\Cont(X)$.
We can assume without loss of generality that $f$ is bounded in modulus by $1$.
Let $\epsilon\in (0,1)$ and $\rho\in (1,1+\epsilon]$ be arbitrary and, as guaranteed by \cref{lem_coprimaility_measures_1}, find two finite sets $B_1,B_2\subset\N$ satisfying conditions \ref{itm_a}, \ref{itm_b}, and \ref{itm_c}.
Combining \eqref{eqn_coprimality_criterion} with property \ref{itm_c} gives
$$
\BEu{n\in [N]} f\big(T^{\Omega(n)+1}x\big)~=~  \BEul{p\in B_{1}} \, \BEu{n\in [\nicefrac{N}{p}]} f\big(T^{\Omega(pn)+1}x\big) \,+\,\Oh( \epsilon^{1/2})\,+\,\oh_{N\to\infty}(1)
$$
as well as
$$
\BEu{n\in [N]} f\big(T^{\Omega(n)}x\big)~=~  \BEul{q\in B_{2}} \, \BEu{n\in [\nicefrac{N}{q}]} f\big(T^{\Omega(qn)}x\big) \,+\,\Oh( \epsilon^{1/2})\,+\,\oh_{N\to\infty}(1).
$$
Since $B_1$ is comprised only of primes, we have $T^{\Omega(pn)+1}x= T^{\Omega(n)+2}x$ for all $p\in B_1$.
Similarly we have $T^{\Omega(qn)}x= T^{\Omega(n)+2}x$ for all $q\in B_2$, because $B_2$ is comprised only of $2$-almost primes.
We conclude that
\begin{equation}
\label{eqn_ud_of_Omega_3n_2}
\begin{split}
\bigg| & \BEu{n\in[N]} f\big(T^{\Omega(n)}x\big)~-~\BEu{n\in[N]} f\big(T^{\Omega(n)+1}x\big)\bigg|
\\
&~=~\left|
 \BEul{p\in B_{1}} \, \BEu{n\in [\nicefrac{N}{p}]} f\big(T^{\Omega(n)+2}x\big) ~-~
\BEul{q\in B_{2}} \, \BEu{n\in [\nicefrac{N}{q}]} f\big(T^{\Omega(n)+2}x\big) 
\right|\,+\,\Oh( \epsilon^{1/2})\,+\,\oh_{N\to\infty}(1)
\end{split}
\end{equation}
Finally, combining \eqref{eqn_ud_of_Omega_3n_2} with \eqref{eqn_dbl_ave_0} from \cref{lem_hilfslemma_1} yields
\[
\bigg| \BEu{n\in[N]} f\big(T^{\Omega(n)}x\big)~-~\BEu{n\in[N]} f\big(T^{\Omega(n)+1}x\big)\bigg|~=~\Oh( \epsilon^{1/2})\,+\,\oh_{N\to\infty}(1).
\]
Letting $\epsilon$ tend to $0$ finishes the proof of \eqref{eqn_ud_of_Omega_3n}.
\end{proof}

\subsection{Proofs of \cref{prop_coprimality_criterion} and Lemmas \ref{lem_coprimaility_measures_1} and \ref{lem_hilfslemma_1}}
\label{sec_aux_thm_A}

We begin with the proof of \cref{lem_hilfslemma_1}.

\begin{proof}[Proof of \cref{lem_hilfslemma_1}]
By splitting up the logarithmic averages over $B_1$ and $B_2$ into ``$\rho$-adic'' intervals $[\rho^j,\rho^{j+1})$ and using the triangle inequality, we obtain
\begin{equation}
\label{eqn_dbl_ave_1}
\begin{split}
&\left|\,\BEul{p\in B_{1}} \, \BEu{n\in [\nicefrac{N}{p}]} a(n) ~-~ \BEul{q\in B_{2}} \, \BEu{n\in [\nicefrac{N}{q}]} a(n)\,\right|
\\
&\qquad= ~ \left|\,\sum_{j=0}^\infty\left(
\BEul{p\in B_{1}} 1_{[\rho^j,\rho^{j+1})}(p) \BEu{n\in [\nicefrac{N}{p}]} a(n) ~-~ \BEul{q\in B_{2}} 1_{[\rho^j,\rho^{j+1})}(q) \BEu{n\in [\nicefrac{N}{q}]} a(n)\,\right)\right|
\\
&\qquad\leq ~ \sum_{j=0}^\infty~
\left|\,\BEul{p\in B_{1}} 1_{[\rho^j,\rho^{j+1})}(p) \BEu{n\in [\nicefrac{N}{p}]} a(n) ~-~ \BEul{q\in B_{2}} 1_{[\rho^j,\rho^{j+1})}(q) \BEu{n\in [\nicefrac{N}{q}]} a(n)\,\right|.
\end{split}
\end{equation}
Observe that for any $p,q\in [\rho^j,\rho^{j+1})$ we have
\begin{equation}
\label{eqn_dbl_ave_2}
\left|\BEu{n\in [\nicefrac{N}{p}]} a(n) \,-\,\BEu{n\in [\nicefrac{N}{\rho^j}]} a(n)\right|~\leq~ \epsilon
\quad\text{and}\quad
\left|\BEu{n\in [\nicefrac{N}{q}]} a(n) \,-\,\BEu{n\in [\nicefrac{N}{\rho^j}]} a(n)\right|~\leq~ \epsilon,
\end{equation}
because $\rho\leq 1+\epsilon$.
Since $B_1\cap[\rho^j,\rho^{j+1})$ has the same cardinality as $B_2\cap[\rho^j,\rho^{j+1})$, we also have
\begin{align*}
\BEul{p\in B_{1}} 1_{[\rho^j,\rho^{j+1})}(p)
&\leq \frac{\frac{1}{\rho^j}|B_1\cap [\rho^j,\rho^{j+1})|}{\sum_{p\in B_1}1/p}
\\
&= \frac{\frac{1}{\rho^j}|B_2\cap [\rho^j,\rho^{j+1})|}{\sum_{p\in B_1}1/p}
\\
&\leq  (1+\epsilon)~ \frac{\frac{1}{\rho^{j}}|B_2\cap [\rho^j,\rho^{j+1})|}{\sum_{p\in B_2}1/p}
\\
&\leq (1+\epsilon)^2~ \BEul{q\in B_{2}} 1_{[\rho^j,\rho^{j+1})}(q).
\end{align*}
Similarly, one can show that
$\E_{p\in B_{1}}^{\log{}} 1_{[\rho^j,\rho^{j+1})}(p) \geq 
\big(1+\epsilon\big)^{-2} \, \E_{q\in B_{2}}^{\log{}} 1_{[\rho^j,\rho^{j+1})}(q)$.
Since $\epsilon<1$, we can replace $(1+\epsilon)^{2}$ with $(1+3\epsilon)$ and $(1-\epsilon)^{-2}$ with $(1-3\epsilon)$ and obtain
\begin{equation}
\label{eqn_dbl_ave_3}
\big(1-3\epsilon\big) ~\BEul{q\in B_{2}} 1_{[\rho^j,\rho^{j+1})}(q)~\leq~ \BEul{p\in B_{1}} 1_{[\rho^j,\rho^{j+1})}(p) ~\leq~ \big(1+3\epsilon\big) ~\BEul{q\in B_{2}} 1_{[\rho^j,\rho^{j+1})}(q).
\end{equation}
Combining \eqref{eqn_dbl_ave_1}, \eqref{eqn_dbl_ave_2} and \eqref{eqn_dbl_ave_3} proves \eqref{eqn_dbl_ave_0}.
\end{proof}

Next we state and prove a lemma which will be useful for the proof of \cref{prop_coprimality_criterion}.

\begin{Lemma}
\label{lem_vTK}
Let $B$ be a finite and non-empty subset of $\N$ and recall that $\Phi(m,n)=\gcd(m,n)-1$.
Then
\begin{equation}
\label{eqn_Turan-Kubilius-2}
\lim_{N\to\infty} \BEu{n\in [N]} \left|\BEul{m\in B} \big(1- m \1_{m\mid n}\big)\right|^2 ~=~ \BEul{l\in B}\BEul{m\in B} \Phi(l,m).
\end{equation}
\end{Lemma}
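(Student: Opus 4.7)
The plan is to expand the square and compute the Cesàro average over $n\in[N]$ term by term, exchanging the finite double logarithmic average over $B$ with the limit. Specifically, since the quantity inside the absolute value is real, we can write
\begin{equation*}
\left|\BEul{m\in B}\bigl(1-m\1_{m\mid n}\bigr)\right|^2
~=~\BEul{l\in B}\BEul{m\in B}\bigl(1-l\1_{l\mid n}\bigr)\bigl(1-m\1_{m\mid n}\bigr),
\end{equation*}
and then expand the product as $1 - l\1_{l\mid n} - m\1_{m\mid n} + lm\1_{l\mid n}\1_{m\mid n}$.

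Next I would compute each of the four Cesàro limits. For fixed $l,m$, the standard divisibility counts give
\begin{equation*}
\lim_{N\to\infty}\BEu{n\in[N]} l\1_{l\mid n}=\lim_{N\to\infty}\frac{l\lfloor N/l\rfloor}{N}=1,
\end{equation*}
and similarly for $m$. For the cross term, $\1_{l\mid n}\1_{m\mid n}=\1_{\mathrm{lcm}(l,m)\mid n}$, so
\begin{equation*}
\lim_{N\to\infty}\BEu{n\in[N]} lm\1_{l\mid n}\1_{m\mid n}=\frac{lm}{\mathrm{lcm}(l,m)}=\gcd(l,m).
\end{equation*}
Adding these up yields $1-1-1+\gcd(l,m)=\gcd(l,m)-1=\Phi(l,m)$ in the limit, for each fixed pair $(l,m)$.

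Finally, since $B$ is finite, the double logarithmic average $\BEul{l\in B}\BEul{m\in B}$ is a finite linear combination, so we can freely interchange it with $\lim_{N\to\infty}$ to conclude
\begin{equation*}
\lim_{N\to\infty}\BEu{n\in[N]}\left|\BEul{m\in B}\bigl(1-m\1_{m\mid n}\bigr)\right|^2
~=~\BEul{l\in B}\BEul{m\in B}\Phi(l,m).
\end{equation*}
There is no genuine obstacle here — the only mildly delicate point is keeping track of the $\oh(1)$ error terms coming from the floor functions $\lfloor N/l\rfloor$ and $\lfloor N/\mathrm{lcm}(l,m)\rfloor$, but these are uniformly bounded over the finite set $B$, so they vanish after averaging and sending $N\to\infty$.
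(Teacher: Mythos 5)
Your proposal is correct and follows essentially the same route as the paper: expand the square, use the elementary counts $\BEu{n\in[N]} m\1_{m\mid n}=1+\Oh(1/N)$ and $\BEu{n\in[N]} lm\1_{l\mid n}\1_{m\mid n}=\gcd(l,m)+\Oh(1/N)$, and exchange the limit with the finite logarithmic averages over $B$. The paper merely organizes the expansion as $1-2\Sigma_1+\Sigma_2$ rather than as four separate terms, which is a purely cosmetic difference.
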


\begin{proof}
By expanding the square on the left hand side of \eqref{eqn_Turan-Kubilius-2} we get
\begin{equation}
\label{eqn_vTK-1}
\BEu{n\in [N]} \left|\BEul{m\in B} \big(1- m \1_{m\mid n}\big)\right|^2 ~=~ 1 -2 \Sigma_1+\Sigma_2,
\end{equation}
where $\Sigma_1\coloneqq \mathbb{E}_{n\in [N]} \mathbb{E}^{\log}_{m\in B} m 1_{m\mid n}$ and $\Sigma_2\coloneqq \mathbb{E}_{n\in [N]}\mathbb{E}^{\log}_{l,m\in B} (l 1_{l\mid n})(m1_{m\mid n})$.

Note that $\mathbb{E}_{n\in [N]} m\1_{m\mid n}=1+\Oh\left(1/N\right)$ and therefore
\begin{equation}
\label{eqn_vTK-3}
\Sigma_1 = 1+ \Oh\left(\tfrac{1}{N}\right).
\end{equation}
Similarly, since $\mathbb{E}_{n\in [N]} lm \1_{l\mid n}\1_{m\mid n} = \gcd(l,m) + \Oh\left(1/ N\right)$, we have
\begin{equation}
\label{eqn_vTK-2}
\Sigma_2~=~\BEul{l\in B}\BEul{m\in B} \gcd(l,m)\,+\,\Oh\left(\tfrac{1}{N}\right)
~=~
1+\BEul{l\in B}\BEul{m\in B} \Phi(l,m) \,+\, \Oh\left(\tfrac{1}{N}\right).
\end{equation}
Putting together \eqref{eqn_vTK-1}, \eqref{eqn_vTK-3} and \eqref{eqn_vTK-2} completes the proof of \eqref{eqn_Turan-Kubilius-2}.
\end{proof}

\begin{proof}[Proof of \cref{prop_coprimality_criterion}]
It follows from \cref{lem_vTK} and the Cauchy-Schwarz inequality that
\begin{equation*}
\label{eqn_vTK-4}
\lim_{N\to\infty} \BEu{n\in [N]} \left|\BEul{m\in B} \big(1- m \1_{m\mid n}\big)\right| ~\leq~ \left(\BEul{l\in B}\BEul{m\in B} \Phi(l,m)\right)^{1/2}.
\end{equation*}
Thus, we have
\begin{eqnarray*}
\limsup_{N\to\infty}\left|\BEu{n\in [N]} a(n)\,-\,  \BEul{m\in B}\, \BEu{n\in [\nicefrac{N}{m}]} a(mn) \right|
&=& \limsup_{N\to\infty}\left|\BEu{n\in [N]} \BEul{m\in B} a(n)\,\big(1- m \1_{m\mid n}\big)\right|
\\
&\leq& \limsup_{N\to\infty}\BEu{n\in [N]} \left|\BEul{m\in B} \big(1- m \1_{m\mid n}\big)\right|.
\end{eqnarray*}
This proves \eqref{eqn_coprimality_criterion}.
\end{proof}

Now that we have finished the proofs of \cref{lem_hilfslemma_1} and \cref{prop_coprimality_criterion}, the remainder of this subsection is dedicated to the proof of \cref{lem_coprimaility_measures_1}.
We begin with two helpful lemmas.

\begin{Lemma}
\label{lem_coprimaility_measurement_primes}
Fix $\epsilon\in (0,1)$ and let $B\subset \P$ be a finite set of primes satisfying $\sum_{m\in B} 1/m \geq \frac{1}{\epsilon}$. Then $$
\BEul{m\in B}\BEul{n\in B} \Phi(m,n)\,\leq\, \epsilon.
$$
\end{Lemma}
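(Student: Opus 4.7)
The plan is to exploit the fact that for primes, the function $\Phi(m,n)=\gcd(m,n)-1$ has an especially simple form: it vanishes off the diagonal. Indeed, if $m,n\in\P$ and $m\neq n$, then $\gcd(m,n)=1$ and $\Phi(m,n)=0$, whereas $\Phi(m,m)=m-1$. This collapses the double logarithmic average to a single sum supported on the diagonal.

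More concretely, writing out the definitions we have
\[
\BEul{m\in B}\BEul{n\in B}\Phi(m,n)\;=\;\frac{\displaystyle\sum_{m,n\in B}\frac{\Phi(m,n)}{mn}}{\left(\displaystyle\sum_{m\in B}\frac{1}{m}\right)^{2}}\;=\;\frac{\displaystyle\sum_{m\in B}\frac{m-1}{m^{2}}}{\left(\displaystyle\sum_{m\in B}\frac{1}{m}\right)^{2}}.
\]
Since $\sum_{m\in B}\frac{m-1}{m^{2}}\leq \sum_{m\in B}\frac{1}{m}$, the right-hand side is bounded by $\bigl(\sum_{m\in B}1/m\bigr)^{-1}$, which by hypothesis is at most $\epsilon$.

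This is essentially a one-line computation; there is no real obstacle. The only thing to keep straight is the distinction between the diagonal contribution and the off-diagonal contribution, which is why the hypothesis that $B$ consists of primes is what makes the bound so sharp. Note that in the more general setting of \cref{lem_coprimaility_measures_1}, where $B_{2}\subset\P_{2}$, one cannot afford to drop all off-diagonal terms, which is why the argument there will be more delicate; the present lemma is the ``clean'' prime-only prototype.
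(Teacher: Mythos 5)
Your proof is correct and is essentially identical to the paper's: both collapse the double logarithmic average to the diagonal using that $\Phi(m,n)=0$ for distinct primes, bound the diagonal sum by $\sum_{m\in B}1/m$, and conclude via the hypothesis $\sum_{m\in B}1/m\geq 1/\epsilon$. No issues.
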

\begin{proof}
Since all elements in $B$ are prime numbers, for $m,n\in B$ the quantity $\Phi(m,n)=\gcd(m,n)-1$ is non-zero if and only if $m=n$. Hence
\begin{eqnarray*}
\BEul{m\in B}\BEul{n\in B} \Phi(m,n)
&=& \frac{\sum_{m,n\in B} \frac{\gcd(m,n)-1}{mn} }{\left(\sum_{m\in B} \frac{1}{m}\right)^{2}}
\\
&=& \frac{\sum_{m\in B} \frac{m-1}{m^2} }{\left(\sum_{m\in B} \frac{1}{m}\right)^{2}}
\\
&\leq& \frac{\sum_{m\in B} \frac{1}{m}} {\left(\sum_{m\in B} \frac{1}{m}\right)^{2}}.
\\
&\leq& \epsilon.
\end{eqnarray*}
This finishes the proof.
\end{proof}

\begin{Lemma}
\label{lem_coprimaility_measurement_2almostprimes}
Fix $\epsilon\in(0,1)$. Let $P_1,P_2\subset \P$ be finite sets of primes satisfying $\sum_{p\in P_1} 1/p \geq \frac{3}{\epsilon}$ and $\sum_{q\in P_2} 1/q \geq \frac{3}{\epsilon}$. Define $B\coloneqq \{pq: p\in P_1,~ q\in P_2\}$. Then
$$
\BEul{m\in B}\BEul{n\in B} \Phi(m,n)\,\leq\, \epsilon.
$$
\end{Lemma}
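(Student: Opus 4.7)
The plan is to exploit the factored structure of $B = P_1 \cdot P_2$, so that the double logarithmic average over $B$ decouples into a product of two averages over prime sets, at which point \cref{lem_coprimaility_measurement_primes} essentially takes over. I will work under the (intended) assumption $P_1 \cap P_2 = \emptyset$, which is what is actually needed for the application in \cref{lem_coprimaility_measures_1}. Writing $T_i \coloneqq \sum_{p \in P_i} 1/p$, the hypothesis is $T_1, T_2 \geq 3/\epsilon$.

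First I would observe that disjointness makes the map $(p,q) \mapsto pq$ a bijection $P_1 \times P_2 \to B$, which gives the identity $\sum_{m \in B} 1/m = T_1 T_2$. Second, for $m = p_1 q_1$ and $n = p_2 q_2$ in $B$, disjointness forces
\[
\gcd(m,n) \,=\, \gcd(p_1, p_2)\, \gcd(q_1, q_2),
\]
so that $\sum_{m, n \in B} \gcd(m,n)/(mn)$ factorizes as a product $A_1 \cdot A_2$, where $A_i \coloneqq \sum_{p, p' \in P_i} \gcd(p, p')/(p p')$. Third, since elements of $P_i$ are prime, the same calculation that appears in the proof of \cref{lem_coprimaility_measurement_primes} yields $A_i = T_i^2 + \sum_{p \in P_i}(1/p - 1/p^2) \leq T_i^2 + T_i$.

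Putting these pieces together and subtracting off the contribution of $-1$ in $\Phi(m,n) = \gcd(m,n) - 1$, I would obtain
\[
\BEul{m \in B}\BEul{n \in B} \Phi(m,n) \,\leq\, \frac{(T_1^2 + T_1)(T_2^2 + T_2) - T_1^2 T_2^2}{(T_1 T_2)^2} \,=\, \frac{1}{T_1} + \frac{1}{T_2} + \frac{1}{T_1 T_2} \,\leq\, \frac{\epsilon}{3} + \frac{\epsilon}{3} + \frac{\epsilon^2}{9} \,\leq\, \epsilon,
\]
using $T_1, T_2 \geq 3/\epsilon$ and $\epsilon < 1$ in the last step. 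The only delicate point is the disjointness assumption on $P_1, P_2$: it is what makes both the bijection and the gcd-factorization work. If one insisted on general $P_1, P_2$, one could still push through by (i) noting each $m \in B$ has at most two factorizations $m = pq$ with $p \in P_1, q \in P_2$, giving $\sum_{m \in B} 1/m \geq T_1 T_2 / 2$, and (ii) bounding $\gcd(p_1 q_1, p_2 q_2)$ prime-by-prime via $\gcd(p_1 q_1, p_2 q_2) = \sum_{d \mid \gcd(m,n)} \phi(d)$, but in the intended application of this lemma within \cref{lem_coprimaility_measures_1} one naturally takes $P_1, P_2$ supported on disjoint ranges of primes.
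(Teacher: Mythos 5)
Your argument is correct and lands on exactly the same final bound $\tfrac{1}{T_1}+\tfrac{1}{T_2}+\tfrac{1}{T_1T_2}\leq \tfrac{\epsilon}{3}+\tfrac{\epsilon}{3}+\big(\tfrac{\epsilon}{3}\big)^2<\epsilon$ as the paper, but it gets there by a different decomposition. You decouple the double logarithmic average exactly: under $P_1\cap P_2=\emptyset$ the map $(p,q)\mapsto pq$ is a bijection onto $B$ and $\gcd(p_1q_1,p_2q_2)=\gcd(p_1,p_2)\gcd(q_1,q_2)$, so $\sum_{m,n\in B}\gcd(m,n)/(mn)=A_1A_2$ with $A_i\leq T_i^2+T_i$, and the single-prime computation underlying \cref{lem_coprimaility_measurement_primes} finishes the job. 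The paper instead runs a case analysis on $\gcd(m,n)$ -- it is either $1$, an element of $P_1$, an element of $P_2$, or it forces $m=n$ -- and bounds the three nontrivial contributions by $T_1T_2^2$, $T_1^2T_2$, and $T_1T_2$ respectively before dividing by $\big(\sum_{m\in B}1/m\big)^2=T_1^2T_2^2$. Your factorization is arguably cleaner and makes transparent where multiplicativity enters; the paper's case analysis never mentions disjointness, but, as you correctly sense, it implicitly relies on the absence of collisions as well: the identity $\sum_{m\in B}1/m=T_1T_2$ used there for the denominator can fail when $P_1\cap P_2\neq\emptyset$, and in the direction that is unfavorable for an upper bound. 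Since in the application inside \cref{lem_coprimaility_measures_1} the two prime sets lie in disjoint ranges of primes, your explicitly flagged hypothesis costs nothing there; if you wanted the lemma verbatim for arbitrary finite $P_1,P_2$ you would have to carry out the collision and gcd bookkeeping you only sketch, at the price of slightly worse constants.
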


\begin{proof}
First we calculate that
\begin{equation*}
\BEul{m\in B}\BEul{n\in B} \Phi(m,n)
~=~ \frac{\sum_{m,n\in B} \frac{\gcd(m,n)-1}{mn} }{\left(\sum_{m\in B} \frac{1}{m}\right)^{2}}.
\end{equation*}
Note that for $m,n\in B$ the number $\gcd(m,n)$ is either $1$, or and element of $P_1$, or an element of $P_2$, or an element of $B$. If it is $1$ then $\gcd(m,n)-1=0$ and so this term does not contribute to $\mathbb{E}^{\log}_{m\in B}\mathbb{E}^{\log}_{n\in B} \Phi(m,n)$ at all.
The case where $\gcd(m,n)$ belongs to $B$ can only happen if $m=n$. We can therefore write
$$
\sum_{m,n\in B} \tfrac{\gcd(m,n)-1}{mn}
\,=\,
\underbrace{\sum_{m,n\in B\atop \gcd(m,n)\in P_1} \tfrac{\gcd(m,n)-1}{mn}}_{[1]}
\,+\,
\underbrace{\sum_{m,n\in B\atop \gcd(m,n)\in P_2} \tfrac{\gcd(m,n)-1}{mn}}_{[2]}
\,+\,\underbrace{\sum_{m,n\in B\atop m=n} \tfrac{\gcd(m,n)-1}{mn}}_{[3]}.
$$
The third term can be bounded from above as follows:
$$
[3]~\leq~ \sum_{m\in B}\tfrac{1}{m}
~=~\Bigg(\sum_{p\in P_1}\tfrac{1}{p}\Bigg)\Bigg(\sum_{q\in P_2}\tfrac{1}{q}\Bigg).
$$
To estimate $[1]$, note that if $m,n\in B$ with $\gcd(m,n)\in P_1$ then there exists $p\in P_1$ and $q_1,q_2\in P_2$ such that $m=pq_1$ and $n=pq_2$. In this case, we have
$$
\frac{\gcd(m,n)-1}{mn}~\leq~ \frac{1}{pq_1 q_2}.
$$
This gives us
$$
[1]~\leq~\sum_{p\in P_1}\sum_{q_1,q_2\in P_2}\tfrac{1}{p q_1 q_2} ~=~\Bigg(\sum_{p\in P_1}\tfrac{1}{p}\Bigg)\Bigg(\sum_{q\in P_2}\tfrac{1}{q}\Bigg)^2.
$$
By symmetry we have
$$
[2]~\leq~\Bigg(\sum_{p\in P_1}\frac{1}{p}\Bigg)^2 \Bigg(\sum_{q\in P_2}\tfrac{1}{q}\Bigg).
$$
We conclude that
\begin{eqnarray*}
\frac{\sum_{m,n\in B} \frac{\gcd(m,n)-1}{mn} }{\left(\sum_{m\in B} \frac{1}{m}\right)^{2}}
&=&
\frac{[1]\,+\,[2]\,+\,[3]}{\left(\sum_{m\in B} \frac{1}{m}\right)^{2}}
\\
&=&
\frac{[1]\,+\,[2]\,+\,[3]}{\big(\sum_{p\in P_1}\tfrac{1}{p}\big)^2\big(\sum_{q\in P_2}\tfrac{1}{q}\big)^2}
\\
&\leq &
\frac{1}{\sum_{p\in P_1}\tfrac{1}{p}}\,+\,\frac{1}{\sum_{q\in P_2}\tfrac{1}{q}}\,+\,\frac{1}{\big(\sum_{p\in P_1}\tfrac{1}{p}\big)\big(\sum_{q\in P_2}\tfrac{1}{q}\big)}
\\
&\leq & \frac{\epsilon}{3}\,+\,\frac{\epsilon}{3}\,+\,\left(\frac{\epsilon}{3}\right)^2
\\
&< &\epsilon.
\end{eqnarray*}
This finishes the proof.
\end{proof}

\begin{proof}[Proof of \cref{lem_coprimaility_measures_1}]
Fix $\epsilon\in(0,1)$ and $\rho\in(1,1+\epsilon]$. It is a consequence of the Prime Number Theorem that there exists $j_0\in\N$ and $C>0$ such that
$$
\big|\P\cap \big[\rho^{j},\rho^{j+1/2}\big)\big|~\geq~
\frac{C\rho^j}{j},\qquad\forall j\geq j_0.
$$
Pick $s\in\N$ sufficiently large such that $\sum_{j_0\leq l< s} C/l\geq 3/\epsilon$, and define $$
P_{1,l}~\coloneqq~ \P\cap \big[\rho^{l},\rho^{l+1/2}\big)\qquad\text{and}\qquad
P_1~\coloneqq~\bigcup_{j_0\leq l< s} P_{1,l}.
$$
Observe that
\begin{equation}
\label{eqn_coprimaility_measures_1_1}
\sum_{p\in P_1}\frac{1}{p} ~=~ \sum_{j_0\leq l< s} ~\sum_{p\in P_{1,l}}\frac{1}{p}~\geq~ \sum_{j_0\leq l< s} \frac{|P_{1,l}|}{\rho^l} ~\geq~ \sum_{j_0\leq l<s}\frac{C}{l}~\geq~\frac{3}{\epsilon}.
\end{equation}
Next, choose $t\in\N$ sufficiently large such that $\sum_{j\in[t]} C/(2|P_1|sj)\geq 3/\epsilon$. Then, for every $j\in[t]$ let $P_{2,j}$ be a subset of $\P\cap [\rho^{sj},\rho^{sj+1/2})$ satisfying
$$
\frac{C \rho^{sj}}{2|P_1|sj}~\leq~ |P_{2,j}| ~\leq ~\frac{C \rho^{sj}}{|P_1|sj},
$$
and define
$$
P_2~\coloneqq~\bigcup_{j\in [t]} P_{2,j}.
$$ 
A similar calculation to \eqref{eqn_coprimaility_measures_1_1} shows that
\begin{equation}
\label{eqn_coprimaility_measures_1_2}
\sum_{p\in P_2}\frac{1}{p}~\geq~\frac{3}{\epsilon}.
\end{equation}
Define $B_2\coloneqq P_1 \cdot P_2$. Certainly, $B_2\subset \P_2$. Moreover, combining \cref{lem_coprimaility_measurement_2almostprimes} with \eqref{eqn_coprimaility_measures_1_1} and \eqref{eqn_coprimaility_measures_1_2} shows that $\mathbb{E}^{\log}_{m\in B_2}\mathbb{E}^{\log}_{n\in B_2} \Phi(m,n)\leq \epsilon$. Note that
$$
B_2\cap \big[\rho^{sj+l}, \rho^{sj+l+1}\big) ~=~ P_{1,l} \cdot P_{2,j}
$$
for all $j_0 \leq l<s$ and $j\in[t]$. Therefore
$$
\big|B_2\cap \big[\rho^{sj+l}, \rho^{sj+l+1}\big)\big| \leq \frac{C \rho^{sj}}{sj}~\leq ~ \big|\P\cap \big[\rho^{sj+l}, \rho^{sj+l+1}\big)\big|,
$$
which allows us to find for every $j_0 \leq l<s$ and $j\in[t]$ a set $Q_{l,j}\subset \P\cap [\rho^{sj+l}, \rho^{sj+l+1})$ with $|Q_{l,j}|= |B_2\cap \big[\rho^{sj+l}, \rho^{sj+l+1}\big)|$. Now define $B_1\coloneqq \bigcup_{j_0\leq l<s}\bigcup_{j\in[t]} Q_{l,j}$. By construction, we have $|B_1\cap [\rho^j,\rho^{j+1})|=|B_2\cap [\rho^j,\rho^{j+1})|$ for all $j\in\N\cup\{0\}$. Moreover, using $|B_1\cap [\rho^{j},\rho^{j+1})|=|B_2\cap [\rho^{j+i},\rho^{j+i+1})|$, we see that
$$
\sum_{m\in B_1}\frac{1}{m}~\geq~\frac{1}{\rho} \sum_{m\in B_2}\frac{1}{m} ~=~\frac{1}{\rho}\left(\sum_{p\in P_1}\frac{1}{p}\right)\left(\sum_{q\in P_2}\frac{1}{q}\right)~\geq~ \frac{9}{\rho \epsilon^2}~\geq~ \frac{1}{\epsilon}.
$$
In light of \cref{lem_coprimaility_measurement_primes}, this shows that $\mathbb{E}^{\log}_{m\in B_1}\mathbb{E}^{\log}_{n\in B_1}\Phi(m,n)\leq \epsilon$. This finishes the proof that $B_1$ and $B_2$ satisfy \ref{itm_a}, \ref{itm_b}, and \ref{itm_c}.
\end{proof}

\begin{Remark}
While the Prime Number Theorem was used in the proof of \cref{lem_coprimaility_measures_1} to streamline its exposition, it is possible to avoid using it altogether. We refer the reader to \cite{Richter21} for more details.
\end{Remark}

\section{Finitely generated and strongly uniquley ergodic systems}
\label{sec_fg_sue_mtds}

In this section we deal with \cref{thm_dynamical_MVT_fg_sue}.
In \cref{sec_fg_sue_mtds_cntr-xmpl} we give an example which illustrates that the assumption of strong unique ergodicity imposed on the system $(Y,S)$ in \cref{thm_dynamical_MVT_fg_sue} cannot be relaxed.
In \cref{sec_proof_thmB} we present a proof of \cref{thm_dynamical_MVT_fg_sue} assuming the validity of a technical proposition, \cref{prop_idempotency}, whose proof is given in \cref{sec_aux_thm_B}.

\subsection{A counterexample}
\label{sec_fg_sue_mtds_cntr-xmpl}

The following example describes a multiplicative topological dynamical system that is finitely generated and uniquely ergodic but not strongly uniquely ergodic. We will show that for this system there exist a function $g\in\Cont(Y)$ and a point $y\in Y$ such that \eqref{eqn_ud_of_mtds} fails.

\begin{Example}
\label{example_1}
Fix an arbitrary irrational number $\alpha$ and denote by $\nu_2(n)\coloneqq \max\{e\in\Z: 2^e\mid n\}$ the $2$-adic valuation of a positive integer $n$. Using $\nu_2$, we can define a multiplicative topological dynamical system on the torus in the following way: Define for all $n\in\N$ the map $S_n\colon \T\to\T$ via
$$
S_n(x)= {x+\nu_2(n)\alpha}\bmod{1}.
$$
Since $\nu_2(nm)=\nu_2(n)+\nu_2(m)$ for all $m,n\in\N$, we have $S_{nm}=S_n\circ S_m$ for all $n,m\in\N$. In particular, $S=(S_n)_{n\in\N}$ is an action of $(\N,\cdot)$ and $(\T,S)$ is a multiplicative topological dynamical system.

Since $S_p=\mathrm{id_\T}$ for all primes $p> 2$, the system $(\T,S)$ is finitely generated.
Moreover, since $S_2$ is rotation by $\alpha$, any $S$-invariant measure on $\T$ must, in particular, be invariant under rotation by $\alpha$.
Since $\alpha$ is irrational, the normalized Lebesgue measure is the only Borel probability measure with this property.
We conclude that $(\T,S)$ is uniquely ergodic. But $(\T,S)$ is not strongly uniquely ergodic, because $S_p=\mathrm{id_\T}$ for all primes $p> 2$ and so any Borel probability measure on $\T$ pretends to be invariant under $S$ (see \cref{def_fg_sue}).
To summarize, $(\T,S)$ is a finitely generated multiplicative topological dynamical system that is uniquely ergodic but not strongly uniquely ergodic. We also have that for all $y\in Y$ and $g\in\Cont(Y)$
\begin{align*}
\lim_{N\to\infty}\,\BEu{n\in[N]} g(S_n y) &\,=\,
\lim_{N\to\infty}~\frac{1}{N}\sum_{n=1}^N g(y+\nu_2(n)\alpha)
\\
&\,=\,\lim_{N\to\infty}~\sum_{0\leq i\leq \frac{\log(N)}{\log(2)}}\tfrac{|\{n\in[N]: \nu_2(n)=i\}|}{N}\, g(y+i\alpha)
\\
&\,=\,\tfrac{1}{2} g(y)+\tfrac{1}{4} g(y+\alpha)+\tfrac{1}{8} g( y+2\alpha)+\tfrac{1}{16} g(y+3\alpha)+\ldots,
\end{align*}
 which shows that there exists no Borel probability measure $\nu$ on $\T$ such that \eqref{eqn_ud_of_mtds} holds for the system $(\T,S)$ and all $y\in Y$ and $g\in\Cont(Y)$.
\end{Example}

\subsection{Proof of \cref{thm_dynamical_MVT_fg_sue}}
\label{sec_proof_thmB}

This subsection is dedicated to the proof of \cref{thm_dynamical_MVT_fg_sue}. The main ingredient in our proof is the following technical result, which is proved in \cref{sec_aux_thm_B}.

\begin{Proposition}
\label{prop_idempotency}
 Let $(Y,S)$ be a finitely generated multiplicative topological dynamical system and let $R_1,\ldots, R_d$ denote the generators of $S$. Then for every $y\in Y$, $g\in\Cont(Y)$, and $e\in [d]$ with $\sum_{p\in \P,\, S_p=R_e}1/p=\infty$ we have
\begin{equation}
\label{eqn_idempotency_1}
\lim_{N\to\infty}\,\left|\BEu{n\in[N]} g(R_{e}S_n y)~-~\BEu{n\in[N]} g(R_{e}^2S_n y) \right| ~=~0.
\end{equation}
\end{Proposition}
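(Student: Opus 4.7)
The plan is to parallel the proof of \cref{thm_dynamical_MVT_Omega}, with the role of the primes $\P_1$ played by $P_e\coloneqq\{p\in\P:S_p=R_e\}$ and the role of the two-almost primes $\P_2$ played by products of two distinct primes drawn from $P_e$. The divergence hypothesis $\sum_{p\in P_e}1/p=\infty$ will substitute for the Prime Number Theorem that was used in the argument of \cref{thm_dynamical_MVT_Omega}. Throughout I will assume, without loss of generality, that $|g|\leq 1$.

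First I would fix $\epsilon\in(0,1)$ and $\rho\in(1,1+\epsilon]$ and construct finite sets $B_2\subset P_e$ and $B_1\subset\{pq:p,q\in P_e,\,p\neq q\}$ that play the role of the sets produced by \cref{lem_coprimaility_measures_1}: they should satisfy $|B_1\cap[\rho^j,\rho^{j+1})|=|B_2\cap[\rho^j,\rho^{j+1})|$ for every $j\in\N\cup\{0\}$, and $\BEul{m\in B_i}\BEul{n\in B_i}\Phi(m,n)\leq\epsilon$ for $i=1,2$. The construction mimics the scheme of \cref{lem_coprimaility_measures_1}: use divergence of $\sum_{p\in P_e}1/p$ to harvest subsets $P_1,P_2\subset P_e$ living at widely separated scales whose reciprocal sums each exceed $3/\epsilon$, put $B_1\coloneqq P_1\cdot P_2$, and carve out a matching $B_2\subset P_e$ whose $\rho$-adic block counts coincide with those of $B_1$.

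With $B_1,B_2$ in hand, two applications of \cref{prop_coprimality_criterion}, once with the bounded function $n\mapsto g(R_e S_n y)$ and the set $B_1$, once with $n\mapsto g(R_e^2 S_n y)$ and the set $B_2$, yield
\begin{align*}
\BEu{n\in[N]}g(R_e S_n y)&=\BEul{m\in B_1}\BEu{n\in[\nicefrac{N}{m}]}g(R_e S_{mn} y)+\Oh(\epsilon^{1/2})+\oh_{N\to\infty}(1),\\
\BEu{n\in[N]}g(R_e^2 S_n y)&=\BEul{p\in B_2}\BEu{n\in[\nicefrac{N}{p}]}g(R_e^2 S_{pn} y)+\Oh(\epsilon^{1/2})+\oh_{N\to\infty}(1).
\end{align*}
For any $m=pq\in B_1$, since $p,q\in P_e$ we have $S_m=S_pS_q=R_e^2$ and hence $R_e S_{mn}y=R_e^3 S_n y$; for any $p\in B_2\subset P_e$ we have $S_p=R_e$ and hence $R_e^2 S_{pn}y=R_e^3 S_n y$. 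Both right-hand sides are therefore averages of $g(R_e^3 S_n y)$ against $B_1$ and $B_2$, and applying \cref{lem_hilfslemma_1} to the bounded function $a(n)\coloneqq g(R_e^3 S_n y)$ shows that these two averages differ by at most $3\epsilon$. Combining these estimates gives
\[
\left|\BEu{n\in[N]}g(R_e S_n y)-\BEu{n\in[N]}g(R_e^2 S_n y)\right|=\Oh(\epsilon^{1/2})+\oh_{N\to\infty}(1),
\]
and sending $\epsilon\to 0$ delivers \eqref{eqn_idempotency_1}.

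The main obstacle is the first step. The proof of \cref{lem_coprimaility_measures_1} relied on the Prime Number Theorem to lower-bound $|\P\cap[\rho^j,\rho^{j+1/2})|$ at every sufficiently large scale $j$, a tool that is unavailable for $P_e$. One must instead exploit the qualitative divergence of the series more delicately: a greedy selection of $\rho$-adic blocks carrying a prescribed amount of $P_e$-mass suffices, since the accumulated reciprocal mass along the chosen blocks can be made arbitrarily large even when many individual scales are sparse or empty. Verifying that this greedy procedure simultaneously produces matching block counts for $B_1$ and $B_2$ and keeps the quadratic coprimality estimates of \cref{lem_coprimaility_measurement_primes} and \cref{lem_coprimaility_measurement_2almostprimes} under control is the principal technical point.
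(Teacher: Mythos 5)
Your second step is sound: once sets $B_1\subset\{pq:p,q\in P_e\}$ and $B_2\subset P_e$ with matching $\rho$-adic block counts and small coprimality averages are available, the computation $S_{pq}=R_e^2$ for $p,q\in P_e$, two applications of \cref{prop_coprimality_criterion}, and \cref{lem_hilfslemma_1} applied to $a(n)=g(R_e^3S_ny)$ do give \eqref{eqn_idempotency_1}; this is essentially the content of \cref{lem_coprimaility_measures_3}. The genuine gap is the first step, which you flag but do not resolve: from the bare hypothesis $\sum_{p\in P_e}1/p=\infty$ you cannot in general ``carve out a matching $B_2\subset P_e$'' for $B_1=P_1\cdot P_2$. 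In the proof of \cref{lem_coprimaility_measures_1} the matching set of primes is extracted from \emph{all} of $\P$, and the Prime Number Theorem guarantees that every sufficiently high $\rho$-adic block contains enough primes to absorb the (deliberately thinned) block counts of the product set. Here the products $pq$ with $p,q\in P_e$ land at scales $\log p+\log q$, and the primes in those blocks may be assigned entirely to other generators $R_{e'}$, so $P_e$ may contain no primes at all at the scales where $B_1$ lives; no greedy selection of blocks ``carrying $P_e$-mass'' can fix this, because the constraint is on where the \emph{sumset} of the chosen scales falls, not on the chosen scales themselves. This is precisely the obstruction that forces the paper's detour: it introduces the blocks $I_e$ where $P_e$ holds at least a $1/d$-share of all primes, the DSR notion, and the relations $\sim$ and $\approx$, and in part \ref{itm_c_m_3} of \cref{lem_coprimaility_measures_2} it matches the two-almost primes built from $P_{e_1}$ against primes from a possibly \emph{different} class $P_{e_2}$ with $e_2\approx e_1$ (and with a block shift $i\in\{0,1\}$, handled by \cref{lem_hilfslemma_2}).

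Consequently the paper's proof of \cref{prop_idempotency} is not a direct comparison of $R_e$ with $R_e^2$ inside $P_e$, but a chain: part \ref{itm_c_m_1} of \cref{lem_coprimaility_measures_2} plus \cref{lem_coprimaility_measures_3} replace $e$ by some $e'$ with $I_{e'}$ DSR, and since that estimate is uniform in $g$ one may compose with $R_e$, $R_{e'}$ to also compare $R_e^2$ with $R_{e'}^2$; part \ref{itm_c_m_3} compares $R_{e'}^2$ with $R_{e''}$ for some $e''\approx e'$; and part \ref{itm_c_m_2}, iterated along a $\sim$-chain (this is where \cref{lem_equiv_classes} and the partition regularity of DSR sets enter), brings $R_{e''}$ back to $R_{e'}$. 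Your proposal collapses this entire mechanism into the unproven assertion that the matching can be done within $P_e$ itself, which is exactly the point where the argument would fail for an adversarial assignment of primes to generators; so as written the proof is incomplete at its crucial step.
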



\begin{proof}[Proof of \cref{thm_dynamical_MVT_fg_sue} assuming \cref{prop_idempotency}]
Let $(Y,S)$ be a finitely generated multiplicative topological dynamical system and let $R_1,\ldots, R_d$ denote the generators of $S$.
We also assume that $(Y,S)$ is strongly uniquely ergodic, i.e., there exists only one Borel probability measure $\nu$ on $Y$ that pretends to be invariant under $S$ (see \cref{def_fg_sue}).
For $e\in[d]$ define $P_e\coloneqq \{p\in\P: S_p=R_e\}$ and let $y\in Y$ be fixed.
Our goal is to show
\begin{equation}
\label{eqn_ud_fg_sue_2}
\lim_{N\to\infty}\, \BEu{n\in[N]} g\big(S_n y\big)
~=~\int g\d\nu
\end{equation}
for every $g\in\Cont(X)$.

Let $\nu_N$ be the Borel probability measure on $Y$ that is uniquely determined by $\int g\d\nu_N= \mathbb{E}_{n\in[N]} g(S_n y)$ for all $g\in\Cont(Y)$.
Certainly, \eqref{eqn_ud_fg_sue_2} is equivalent to the assertion that $\nu_N\to \nu$ as $N\to\infty$.
Since $(Y,S)$ is strongly uniquely ergodic, to prove that $\nu_N\to \nu$ it suffices to show that any limit point of $\{\nu_N: N\in\N\}$ pretends to be invariant under $S$, or in other words, any limit point of $\{\nu_N: N\in\N\}$ is invariant under $R_e$ for all $e\in[d]$ for which $\sum_{p\in P_e} 1/p=\infty$.
This, in turn, follows from
\begin{equation}
\label{eqn_ud_fg_sue_3}
\lim_{N\to\infty}\,\left|\BEu{n\in[N]} g(S_n y)~-~\BEu{n\in[N]} g(R_{e}S_n y) \right| ~=~0.
\end{equation}

It remains to verify \eqref{eqn_ud_fg_sue_3}.
Fix $\epsilon\in (0,1)$ and $e\in[d]$ for which $\sum_{p\in P_e} 1/p=\infty$.
Applying \cref{prop_coprimality_criterion} once with $a(n)=g(S_ny)$ and once with $a(n)=g(R_e S_n y)$, we obtain
\[
\limsup_{N\to\infty}\,\left|\BEu{n\in[N]} g(S_n y)~-~\BEul{p\in B}\BEu{n\in[\nicefrac{N}{p}]} g(R_eS_{n} y) \right| \leq \left(\BEul{m\in B}\BEul{n\in B} \Phi(m,n)\right)^{1/2},
\]
and
\[
\limsup_{N\to\infty}\,\left|\BEu{n\in[N]} g(R_e S_n y)~-~\BEul{p\in B}\BEu{n\in[\nicefrac{N}{p}]} g(R_{e}^2S_{n} y) \right| \leq\left(\BEul{m\in B}\BEul{n\in B} \Phi(m,n)\right)^{1/2}.
\]
In view of \cref{lem_coprimaility_measurement_primes}, if $s$ is sufficiently large and $B\coloneqq P_e\cap [s]$, then
\begin{eqnarray}
\label{eqn_ud_fg_sue_4}
\limsup_{N\to\infty}\,\left|\BEu{n\in[N]} g(S_n y)~-~\BEul{p\in B}\BEu{n\in[\nicefrac{N}{p}]} g(R_eS_{n} y) \right| &\leq&\epsilon,
\\
\label{eqn_ud_fg_sue_5}
\limsup_{N\to\infty}\,\left|\BEu{n\in[N]} g(R_e S_n y)~-~\BEul{p\in B}\BEu{n\in[\nicefrac{N}{p}]} g(R_{e}^2S_{n} y) \right| &\leq&\epsilon.
\end{eqnarray}
From \cref{prop_idempotency} we get
\begin{equation}
\label{eqn_ud_fg_sue_6}
\lim_{N\to\infty}\,\left|
\BEu{n\in[\nicefrac{N}{p}]} g(R_{e}^2S_{n} y)~-~
\BEu{n\in[\nicefrac{N}{p}]} g(R_eS_{n} y)
\right| ~=~0.
\end{equation}
Putting together \eqref{eqn_ud_fg_sue_6} with \eqref{eqn_ud_fg_sue_4} and \eqref{eqn_ud_fg_sue_5} gives
\[
\limsup_{N\to\infty}\,\left|\BEu{n\in[N]} g(S_n y)~-~\BEu{n\in[N]} g(R_{e}S_n y) \right| ~\leq~2\epsilon.
\]
Since $\epsilon$ can be made arbitrarily small, we get \eqref{eqn_ud_fg_sue_3}.
\end{proof}

\subsection{Proof of \cref{prop_idempotency}}
\label{sec_aux_thm_B}

Let us say $A\subset \N$ is a \define{DSR (Divergent Sum of Reciprocals)} set if
\[
\sum_{n\in A} \frac{1}{n}~=~\infty.
\]
Note that DSR sets are \define{partition regular}, meaning that whenever one is partitioned into finitely many pieces, at least one of the pieces is itself DSR.

Suppose we are given a finite partition of $\N$, i.e.\ $\N=I_1 \cup\ldots \cup I_d$.
We define a relation $\sim$ on the set $[d]$ of indices of this partition via
\begin{equation}
\label{eqn_relation}
e\sim e' ~\iff~ 
\begin{array}{l}
\text{at least one of the sets $I_e\cap I_{e'}$, $I_e\cap (I_{e'}-1)$,}\\
\text{ or $I_e\cap (I_{e'}+1)$ is a DSR set.}
\end{array}
\end{equation}
The relation $\sim$ is reflexive and symmetric, but it might not be transitive. Its transitive closure is the relation $\approx$ defined as
\begin{equation}
\label{eqn_equiv_relation}
e\approx e' ~\iff~ 
\begin{array}{l}
\text{there exist}~r\in[d]~\text{and}~e_1,\ldots,e_r\in [d]~\text{such that}~e_1=e,\\
e_r=e',~\text{and}~e_{i}\sim e_{i+1}~\text{for all}~i\in[r-1]. 
\end{array}
\end{equation}
It is not hard to see that $\approx$ is an equivalence relation on $[d]$.

The next lemma generalizes \cref{lem_coprimaility_measures_1} and is important for our proof of \cref{prop_idempotency}.
Recall that $\Phi(m,n)=\gcd(m,n)-1$.

\begin{Lemma}
\label{lem_coprimaility_measures_2}
Let $\epsilon\in(0,1)$ and $\rho\in (1,1+\epsilon]$. Let $(Y,S)$ be a finitely generated multiplicative topological dynamical system and let $R_1,\ldots, R_d$ denote the generators of $S$. Define, for all $e\in[d]$,
\[
P_e\,\coloneqq\, \{p\in\P: S_p=R_e\}~~\text{and}~~
I_e \,\coloneqq\, \left\{j\in\N: \big|P_e\cap \big[\rho^j,\rho^{j+1}\big)\big|\,\geq\, \frac{1}{d}\big|\P\cap \big[\rho^j,\rho^{j+1}\big)\big|\right\}.
\] 
\begin{enumerate}	
[label=(\roman{enumi})~,ref=(\roman{enumi})]
\item\label{itm_c_m_1}
For every $e_1\in[d]$ with $\sum_{p\in P_{e_1}}1/p=\infty$ there exists $e_2\in[d]$ for which $I_{e_2}$ is a DSR set, and there exist finite and non-empty sets $B_1,B_2\subset\N$ with the following properties:
\begin{enumerate}	
[label=~(i-\alph{enumii}), ref=(i-\alph{enumii}),leftmargin=*]
\item\label{itm_c_m_1_a}
$B_1\subset P_{e_1}$ and $B_2\subset P_{e_2}$;
\item\label{itm_c_m_1_b}
$|B_1\cap [\rho^{j},\rho^{j+1})|=|B_2\cap [\rho^{j},\rho^{j+1})|$ for all $j\in\N$;
\item\label{itm_c_m_1_c}
$\mathbb{E}^{\log}_{m\in B_1}\mathbb{E}^{\log}_{n\in B_1} \Phi(m,n)\leq \epsilon$ and $\mathbb{E}^{\log}_{m\in B_2}\mathbb{E}^{\log}_{n\in B_2} \Phi(m,n)\leq \epsilon$.
\end{enumerate} 
\item\label{itm_c_m_2}
For all $e_1,e_2\in[d]$ with $e_1\sim e_2$ there exist $i\in\{-1,0,1\}$ and $B_1,B_2\subset\N$ with the following properties:
\begin{enumerate}	
[label=~(ii-\alph{enumii}), ref=(ii-\alph{enumii}),leftmargin=*]
\item\label{itm_c_m_2_a}
$B_1\subset P_{e_1}$ and $B_2\subset P_{e_2}$;
\item\label{itm_c_m_2_b}
$|B_1\cap [\rho^{j},\rho^{j+1})|=|B_2\cap [\rho^{j+i},\rho^{j+i+1})|$ for all $j\in\N$;
\item\label{itm_c_m_2_c}
$\mathbb{E}^{\log}_{m\in B_1}\mathbb{E}^{\log}_{n\in B_1} \Phi(m,n)\leq \epsilon$ and $\mathbb{E}^{\log}_{m\in B_2}\mathbb{E}^{\log}_{n\in B_2} \Phi(m,n)\leq \epsilon$.
\end{enumerate}
\item\label{itm_c_m_3}
For every $e_1\in[d]$ for which $I_{e_1}$ is DSR there exist $e_2\in[d]$ with $e_1\approx e_2$, $i\in\{0,1\}$, and finite and non-empty sets $B_1,B_2\subset\N$ with the following properties:
\begin{enumerate}	
[label=~(iii-\alph{enumii}),ref=(iii-\alph{enumii}),leftmargin=*]
\item\label{itm_c_m_3_a}
$B_1\subset \{pq: p,q\in P_{e_1}\}$ and $B_2\subset P_{e_2}$;
\item\label{itm_c_m_3_b}
$|B_1\cap [\rho^{j+i},\rho^{j+i+1})|=|B_2\cap [\rho^{j},\rho^{j+1})|$ for all $j\in\N$;
\item\label{itm_c_m_3_c}
$\mathbb{E}^{\log}_{m\in B_1}\mathbb{E}^{\log}_{n\in B_1} \Phi(m,n)\leq \epsilon$ and $\mathbb{E}^{\log}_{m\in B_2}\mathbb{E}^{\log}_{n\in B_2} \Phi(m,n)\leq \epsilon$.
\end{enumerate}
\end{enumerate}
\end{Lemma}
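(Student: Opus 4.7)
The plan treats each part via pigeonhole applied to $a_{e,j} := |P_e \cap [\rho^j, \rho^{j+1})|$, combined with the coprimality estimates \cref{lem_coprimaility_measurement_primes,lem_coprimaility_measurement_2almostprimes} and the partition regularity of DSR sets. The only input from the Prime Number Theorem is the bound $|\P \cap [\rho^j,\rho^{j+1})| \asymp \rho^j/j$.

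For part \ref{itm_c_m_1}, since $\sum_j a_{e_1,j}/\rho^j = \infty$, partition $\N = \bigsqcup_e J_e$ where $J_e$ collects those $j$ for which $a_{e,j}$ is maximal; then $J_e \subset I_e$ and $a_{e,j} \geq a_{e_1,j}$ on $J_e$. Pigeonhole over $e$ yields some $e_2$ with $\sum_{j \in J_{e_2}} a_{e_1,j}/\rho^j = \infty$, and since this summand is bounded above by a constant times $1/j$, the set $J_{e_2} \subset I_{e_2}$ must be DSR. Restrict to a finite $J^\star \subset J_{e_2}$ whose partial sum is large, take $B_1 \cap [\rho^j,\rho^{j+1})$ to be all of $P_{e_1} \cap [\rho^j,\rho^{j+1})$ and $B_2 \cap [\rho^j,\rho^{j+1})$ an equal-sized subset of $P_{e_2} \cap [\rho^j,\rho^{j+1})$ for $j \in J^\star$, and apply \cref{lem_coprimaility_measurement_primes}. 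Part \ref{itm_c_m_2} is analogous: for $j$ in the DSR set $K$ witnessing $e_1 \sim e_2$ with shift $i \in \{-1,0,1\}$, match $c_j = \min(|P_{e_1} \cap [\rho^j,\rho^{j+1})|,\, |P_{e_2} \cap [\rho^{j+i},\rho^{j+i+1})|)$ primes at the shifted scales over a sufficiently large finite $K^\star \subset K$ and invoke \cref{lem_coprimaility_measurement_primes}.

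Part \ref{itm_c_m_3} is the main obstacle. The central tool is an inductive claim: for every $k \geq 0$ there exists $e^{(k)} \in [d]$ with $e_1 \approx e^{(k)}$ and $\{j \in I_{e_1} : j + k \in I_{e^{(k)}}\}$ DSR. The base $e^{(0)} = e_1$ is trivial; the induction step applies a $+1$-shift pigeonhole on $[d]$ to the inherited DSR subset of $I_{e^{(k)}}$, producing $e^{(k+1)}$ with $e^{(k)} \sim e^{(k+1)}$. With this in hand, one mimics the construction of \cref{lem_coprimaility_measures_1}: choose a finite $L_1 \subset I_{e_1}$ of large harmonic mass; pigeonhole over the $2d$ possibilities $(e^{(l-i)}, i)$ with $l \in L_1$ and $i \in \{0,1\}$ to obtain $L_1' \subset L_1$ (still of large harmonic mass) that shares a single pair $(e_2, i)$; finally, choose a finite $L_2 \subset I_{e_1}$ of large harmonic mass contained in $\bigcap_{l \in L_1'}\{m : m + l - i \in I_{e_2}\}$. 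The last step is the delicate one: each set in the intersection is DSR by the inductive claim, but the intersection can shrink, so $|L_1|$ must be chosen generously at the outset, possibly with a further Ramsey-type extraction on the bipartite structure $L_1 \times L_2$ to locate a large product subset on which $l + m - i \in I_{e_2}$. Setting $B_1 = \{pq : l \in L_1',\ m \in L_2,\ p \in P_{e_1} \cap [\rho^l,\rho^{l+1/2}),\ q \in P_{e_1} \cap [\rho^m,\rho^{m+1/2})\}$ and $B_2 \subset P_{e_2}$ with matching $\rho$-adic counts at scales $l+m-i$, \cref{lem_coprimaility_measurement_2almostprimes} bounds $\Phi$ on $B_1$ and \cref{lem_coprimaility_measurement_primes} on $B_2$.
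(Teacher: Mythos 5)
Your treatments of parts \ref{itm_c_m_1} and \ref{itm_c_m_2} are fine and essentially coincide with the paper's argument (your maximality-based definition of the sets $J_e$ replaces the paper's partition-regularity argument for the sets $Q_e$; both give the same conclusion). The gap is in part \ref{itm_c_m_3}, at precisely the step you flag and do not resolve. Your inductive claim is correct, but it is too weak: it says that for each \emph{single} shift $k$ the set $D_k=\{j\in I_{e_1}:\ j+k\in I_{e^{(k)}}\}$ is DSR, and it says nothing about the exceptional set $I_{e_1}\setminus D_k$, which may itself be DSR. Consequently the final intersection $\bigcap_{l\in L_1'}D_{l-i}$ need not be DSR and can even be empty --- DSR sets are partition regular but not closed under finite intersection (the even and the odd numbers are disjoint DSR sets) --- so the instruction ``choose a finite $L_2$ of large harmonic mass inside the intersection'' cannot be carried out from what you have established. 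Nor can you retreat to a single $l$: the harmonic mass of $P_{e_1}$ inside one $\rho$-adic interval is $\mathrm{O}(1)$, so you genuinely need many small scales $l$ simultaneously, and simultaneity is exactly what the claim does not provide. The paper resolves this by reversing the order of quantifiers: it first fixes the window length $s$ (so large that the $P_{e_1}$-mass below $\rho^{s+1}$ exceeds $12d^2/\epsilon$), then shows that for a DSR set of large scales $n\in I_{e_1}$ the \emph{whole} window $n+1,\dots,n+s$ stays inside $I=\bigcup_{e\approx e_1}I_e$, and only then pigeonholes over the finitely many pairs $(e_2,K)$ with $K\subset[s]$. The first step rests on \cref{lem_equiv_classes}, i.e. on the complement-smallness statement that $\{n\in I:\ n+\ell\notin I\}$ is \emph{not} DSR; this is strictly stronger than your claim and is what legitimizes intersecting over all $\ell\in[s]$ (removing finitely many non-DSR sets from a DSR set leaves a DSR set). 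Without an analogue of this lemma, the ``Ramsey-type extraction'' you invoke has nothing to extract from.

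Two further points in part \ref{itm_c_m_3} need repair even granting the above. Restricting both prime factors to the lower half-intervals $[\rho^l,\rho^{l+1/2})$ and $[\rho^m,\rho^{m+1/2})$ does confine the products to a single $\rho$-adic interval, but nothing guarantees these halves carry any harmonic mass (all of $P_{e_1}$ in a given interval could lie in the upper half), and mixed halves produce products straddling a boundary; the paper's $4d$-fold subdivision, the selection of two rich pieces $u_j,v_j$, and the condition $i_1+i_2\neq 4d-1$ exist precisely to fix the landing interval while retaining mass. Finally, ``choose $B_2\subset P_{e_2}$ with matching $\rho$-adic counts'' is only possible if the per-interval counts of $B_1$ do not exceed $|P_{e_2}\cap[\rho^{j},\rho^{j+1})|$; this forces the additional trimming of the large-scale prime sets (the paper's bound $|Q_j|\leq|\P\cap[\rho^j,\rho^{j+1})|/(8d|P'|)$) and a separation condition on the large scales to prevent products from different pairs colliding in the same interval --- both of which are absent from your sketch.
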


\begin{proof}[Proof of \cref{lem_coprimaility_measures_2}, part \ref{itm_c_m_1}]
Fix $e_1\in[d]$ with $\sum_{p\in P_{e_1}}1/p=\infty$. Let $J$ denote the set of all $j\in\N$ for which $P_{e_1}\cap [\rho^j,\rho^{j+1})\neq \emptyset$. Since $\N=\bigcup_{e\in[d]} I_e$, for every $j\in J$ there exists $e^{(j)}\in[d]$ such that $j\in I_{e^{(j)}}$. Define, for $e\in[d]$, the set
\begin{equation}
\label{eqn_coprimaility_measures_2_1}
Q_e\coloneqq \bigcup_{j\in J\atop e^{(j)}=e} P_{e_1}\cap \big[\rho^j,\rho^{j+1}\big).
\end{equation}
Observe that $Q_1,\ldots,Q_d$ is a partition of $P_{e_1}$. Since $P_{e_1}$ is a DSR set and since DSR sets are partition regular, there exists $e_2\in[d]$ such that $Q_{e_2}$ is DSR.

We claim that $I_{e_2}$ is a DSR set. By \eqref{eqn_coprimaility_measures_2_1} we have $Q_{e_2}\cap [\rho^j,\rho^{j+1})=\emptyset$ unless $j\in I_{e_2}$. Therefore 
$$
\sum_{p\in Q_{e_2}}\frac{1}{p} ~=~ \sum_{j\in I_{e_2}}~ \sum_{p\in Q_{e_2}\cap [\rho^j,\rho^{j+1})}\frac{1}{p}.  
$$
Note that
$$
\sum_{p\in Q_{e_2}\cap [\rho^j,\rho^{j+1})}\frac{1}{p}~\leq~\sum_{p\in \P\cap [\rho^j,\rho^{j+1})}\frac{1}{p}~\leq~ \frac{C}{j}
$$
for some positive constant $C$.
It follows that
$$
\sum_{p\in Q_{e_2}}\frac{1}{p}~\leq~ C \sum_{j\in I_{e_2}}\frac{1}{j}.
$$
Since $Q_{e_2}$ is DSR, this proves that $I_{e_2}$ is also DSR.

Next, for every $j\in \N$ let $B_{1,j}$ be a subset of $Q_{e_2}\cap [\rho^j,\rho^{j+1})$ of size $\min\{|Q_{e_2}\cap [\rho^j,\rho^{j+1})|,|P_{e_2}\cap [\rho^j,\rho^{j+1})|\}$ and let $B_{2,j}$ be a subset of $P_{e_2}\cap [\rho^j,\rho^{j+1})$ of size $\min\{|Q_{e_2}\cap [\rho^j,\rho^{j+1})|,|P_{e_2}\cap [\rho^j,\rho^{j+1})|\}$. Since both $Q_{e_2}$ and $P_{e_2}$ are DSR sets, it follows that $\bigcup_{j\in\N} B_{1,j}$ and $\bigcup_{j\in\N} B_{2,j}$ are DSR. Therefore, there exists $s\in\N$ such that the sets $B_1\coloneqq \bigcup_{j\in[s]} B_{1,j}$ and $B_2\coloneqq \bigcup_{j\in[s]} B_{2,j}$ have the properties
$$
\sum_{m\in B_1}\frac{1}{m}\,\geq \,\frac{1}{\epsilon}\qquad\text{and}\qquad \sum_{m\in B_2}\frac{1}{m}\,\geq \,\frac{1}{\epsilon}.
$$
Therefore, in light of \cref{lem_coprimaility_measurement_primes}, the sets $B_1$ and $B_2$ satisfy \ref{itm_c_m_1_c}. Since $B_1\subset Q_{e_2}\subset P_{e_1}$ and $B_2\subset P_{e_2}$, we see that $B_1$ and $B_2$ also satisfy \ref{itm_c_m_1_a}. Finally, by construction, $|B_1\cap [\rho^{j},\rho^{j+1})|=|B_2\cap [\rho^{j},\rho^{j+1})|$ for all $j\in\N$, which implies \ref{itm_c_m_1_b}.
\end{proof}

\begin{proof}[Proof of \cref{lem_coprimaility_measures_2}, part \ref{itm_c_m_2}]
Let  $e_1,e_2\in[d]$ with $e_1\sim e_2$. This means that there exists $i\in\{-1,0,1\}$ such that $I_{e_1}\cap (I_{e_2}-i)$ is a DSR set.
Let us now show how to construct sets $B_1$ and $B_2$ satisfying \ref{itm_c_m_2_a}, \ref{itm_c_m_2_b}, and \ref{itm_c_m_2_c}. 
For every $j\in\N$ define
$$
r_j\,\coloneqq\, \min\{|P_{e_1}\cap [\rho^{j},\rho^{j+1})|,|P_{e_2}\cap [\rho^{j+i},\rho^{j+i+1})|\}.
$$
Note that for every $j\in I_{e_1}\cap (I_{e_2}-i)$ we have
\begin{equation}
\label{eqn_r_j_estimate}
r_j~\geq~ \min\left\{\frac{1}{d}\big|\P\cap [\rho^{j},\rho^{j+1})\big|,\, \frac{1}{d}\big|\P\cap [\rho^{j+i},\rho^{j+i+1})\big|\right\}.
\end{equation}
For every $j\in\N$, let $B_{1,j}$ be a subset of $P_{e_1}\cap [\rho^{j},\rho^{j+1})$ of size $r_j$ and let $B_{2,j}$ be a subset of $P_{e_2}\cap [\rho^{j+i},\rho^{j+i+1})$ also of size $r_j$.
We have
\begin{eqnarray*}
\sum_{j\in\N} ~\sum_{p\in B_{1,j}}~\frac{1}{p}
&\geq& \sum_{j\in \N} ~\sum_{p\in B_{1,j}}~\frac{1}{\rho^{j}}
\\
&\geq& \sum_{j\in\N} \frac{r_j}{ \rho^{j}}
\end{eqnarray*}
By \eqref{eqn_r_j_estimate} and the Prime Number Theorem we have that $r_j \geq C \frac{\rho^j}{j}$ for all $j\in I_{e_1}\cap (I_{e_2}-i)$ and some constant $C>0$.
This, combined with the fact that $I_{e_1}\cap (I_{e_2}-i)$ is DSR, proves that the set $\bigcup_{i\in\N} B_{1,i}$ is DSR. An analugous argument shows that $\bigcup_{i\in\N} B_{2,j}$ is DSR. 
Therefore, there exists $s\in\N$ such that $B_1\coloneqq \bigcup_{j\in[s]} B_{1,j}$ and $B_2\coloneqq \bigcup_{j\in[s]} B_{2,j}$ have the properties
$$
\sum_{m\in B_1}\frac{1}{m}\,\geq \,\frac{1}{\epsilon}\qquad\text{and}\qquad \sum_{m\in B_2}\frac{1}{m}\,\geq \,\frac{1}{\epsilon}.
$$
In light of \cref{lem_coprimaility_measurement_primes}, this means that the sets $B_1$ and $B_2$ satisfy \ref{itm_c_m_1_c}.
Moreover, by construction, we have $B_1\subset  P_{e_1}$ and $B_2\subset P_{e_2}$, and also $|B_1\cap [\rho^{j},\rho^{j+1})|=|B_2\cap [\rho^{j+i},\rho^{j+i+1})|$ for all $j\in\N$. Therefore, $B_1$ and $B_2$ also satisfy \ref{itm_c_m_1_a} and \ref{itm_c_m_1_b}.
\end{proof}

For the proof of part \ref{itm_c_m_3} of \cref{lem_coprimaility_measures_2} we need another lemma.

\begin{Lemma}
\label{lem_equiv_classes}
Suppose $\N=I_1 \cup\ldots \cup I_d$ is a partition of $\N$ and fix $e\in[d]$ for which $I_e$ is a DSR set.
Let $\mathcal{C}(e)\coloneqq \{e'\in[d]: e'\approx e\}$ denote the $\approx$-equivalence class of an element $e\in[d]$ and define
$$
I~\coloneqq~\bigcup_{e'\in \mathcal{C}(e)} I_{e'}.
$$
Then for every $\ell\in \N$ the set $\{n\in I: n+\ell\notin I\}$ is not a DSR set.
\end{Lemma}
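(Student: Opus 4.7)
\emph{Plan.} The strategy is to reduce to the case $\ell=1$, which follows directly from the definitions, and then bootstrap to arbitrary $\ell$ via a telescoping observation.

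For the base case $\ell=1$, set $A_1\coloneqq\{n\in I: n+1\notin I\}$. Using the fact that the $I_{e'}$'s partition $\N$ and $I=\bigcup_{e'\in\mathcal{C}(e)} I_{e'}$, I would write
\[
A_1\;=\;\bigcup_{\substack{e'\in\mathcal{C}(e)\\ e''\notin\mathcal{C}(e)}}\bigl(I_{e'}\cap(I_{e''}-1)\bigr),
\]
a finite union indexed by pairs in $[d]\times[d]$. If any summand $I_{e'}\cap(I_{e''}-1)$ were DSR, then the definition of $\sim$ given in \eqref{eqn_relation} would yield $e'\sim e''$, hence $e'\approx e''$; combined with $e'\approx e$ this would force $e''\approx e$, contradicting $e''\notin\mathcal{C}(e)$. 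Therefore each summand has convergent reciprocal sum, and since a finite sum of convergent series converges, $A_1$ is not DSR.

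For general $\ell\ge 1$, the key observation is that if $n\in I$ and $n+\ell\notin I$, then traversing $n, n+1,\ldots,n+\ell$ the membership in $I$ must flip at least once, so there exists $i\in\{0,1,\ldots,\ell-1\}$ with $n+i\in A_1$. Hence
\[
\{n\in I: n+\ell\notin I\}\;\subset\;\bigcup_{i=0}^{\ell-1}(A_1-i),
\]
where $A_1-i\coloneqq\{m\in\N: m+i\in A_1\}$. Since shifting by a bounded amount preserves non-DSR-ness (the reciprocal sums $\sum 1/m$ and $\sum 1/(m+i)$ are comparable for large $m$), each $A_1-i$ is not DSR, and so neither is the finite union on the right-hand side.

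The main obstacle is essentially non-existent once one notices that the definition of $\approx$ is tailor-made so that the $\ell=1$ case is just an unpacking of the definition of $\sim$. The only mild subtleties — that finite unions and bounded shifts of sets with convergent reciprocal sums still have convergent reciprocal sums — are routine and follow immediately from comparing $\sum 1/n$ with $\sum 1/(n+i)$ term by term.
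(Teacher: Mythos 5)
Your proof is correct and takes essentially the same route as the paper: your $\ell=1$ argument is just the contrapositive of the paper's partition-regularity step (decomposing $\{n\in I: n+1\notin I\}$ over pairs $(e',e'')$ and invoking the definition of $\sim$), and your telescoping-plus-shift argument merely makes explicit the reduction from general $\ell$ to $\ell=1$ that the paper asserts without detail. No gaps.
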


\begin{proof}
We will show that $\{n\in I: n+1\notin I\}$ is not DSR; from this the claim follows for $1$ replaced by arbitrary $\ell\in\N$.
By way of contradiction, assume the set $\{n\in I: n+1\notin I\}$ is a DSR set. 
Since $\N\setminus I \subset \bigcup_{e'\notin \mathcal{C}(e)} I_{e'}$, it follows from the partition regularity that for some $e'\notin \mathcal{C}(e)$ the set $\{n\in I: n+1\in I_{e'}\}$ is DSR. But this implies $e'\approx e$, which contradicts $e'\notin \mathcal{C}(e)$. 
\end{proof}

\begin{proof}[Proof of \cref{lem_coprimaility_measures_2}, part \ref{itm_c_m_3}]
Fix $e_1\in[d]$ for which $I_{e_1}$ is a DSR set. Since $I_{e_1}$ is DSR and $|P_{e_1}\cap [\rho^j,\rho^{j+1})|\geq 1/d\, |\P\cap [\rho^j,\rho^{j+1})|$ for all $j\in I_{e_1}$, we have that
$$
\sum_{p\in P_{e_1}}\frac{1}{p}~=~\infty.
$$
Pick $s\in\N$ sufficiently large such that
\begin{equation}
\label{eqn_c_m_p2_1}
\sum_{p\in P_{e_1}\atop p< \rho^{s+1}}\frac{1}{p}~\geq~\frac{12 d^2}{\epsilon}.
\end{equation}
Define
$
I\coloneqq\bigcup_{e\in \mathcal{C}(e_1)} I_{e}
$
and consider the set
$
I'\coloneqq\{n\in I: n+1\in I,~ n+2\in I,\ldots,~n+s\in I\}.
$
In view of \cref{lem_equiv_classes}, the set
$$
I\setminus I' ~=~\bigcup_{\ell\in[s]}\{n\in I: n+\ell\notin I\}
$$ is not a DSR set. In particular, this means $I_{e_1}\setminus I'$ is not DSR, because $I_{e_1}\subset I$. It follows that $E\coloneqq I_{e_1}\cap I'$ must be a DSR set.

By definition, every element $n\in E$ has the property that for all $l\in[s]$ the number $n+l$ belongs to $I_e$ for some $e$ in the equivalence class of $e_1$. In other words, if we define the set $K_{n,e}\subset [s]$ via
$$
l\in K_{n,e}~\iff~ n+l\in I_e,
$$ 
then, as $e$ runs through $\mathcal{C}(e_1)$, the sets $K_{n,e}$ exhaust $[s]$, i.e.,
$$
[s]\,=\,\bigcup_{e\in \mathcal{C}(e_1)} K_{n,e},\qquad\text{for all}~n\in E.
$$
This allows us to write
\[
\sum_{p\in P_{e_1}\atop p< \rho^{s+1}}\frac{1}{p}~=~
\sum_{e\in \mathcal{C}(e_1)} ~\sum_{l\in K_{n,e}} ~ \sum_{p\in P_{e_1}\cap [\rho^l,\rho^{l+1})} \frac{1}{p}.
\]
Note that $\mathcal{C}(e_1)$ contains at most $d$-many elements. Therefore, by the pigeonhole principle, it follows from \eqref{eqn_c_m_p2_1} that for some $e^{(n)}\in \mathcal{C}(e_1)$ we have
\begin{equation}
\label{eqn_c_m_p2_3_0}
\sum_{l\in K_{n,e^{(n)}}} ~ \sum_{p\in P_{e_1}\cap [\rho^l,\rho^{l+1})} \frac{1}{p}~\geq~\frac{12 d }{\epsilon}.
\end{equation}
In summary, we have found for every $n\in E$ an element $e^{(n)}\in \mathcal{C}(e_1)$ and a subset $K_{n,e^{(n)}}\subset[s]$ such that $n+l\in I_{e^{(n)}}$ for every $l\in K_{n,e^{(n)}}$ and \eqref{eqn_c_m_p2_3_0} holds.
There are only finitely many choices for both $e^{(n)}$ and $K_{n,e^{(n)}}$.
Therefore there exists $e_2\in \mathcal{C}(e_1)$, $K\subset [s]$, and $E'\subset E$ such that $E'$ is still a DSR set and $e^{(n)}=e$ and $K_{n,e^{(n)}}=K$ for all $n\in E'$.

From \eqref{eqn_c_m_p2_3_0} it follows that
\[
\sum_{l\in K} ~ \sum_{p\in P_{e_1}\cap [\rho^l,\rho^{l+1})} \frac{1}{p}~\geq~\frac{12 d }{\epsilon}.
\]
We can write the left hand side of the above inequality as
$$
\sum_{l\in K} ~ \sum_{p\in P_{e_1}\cap [\rho^l,\rho^{l+1})} \frac{1}{p}~=~\sum_{i\in\{0,1,\ldots,4d-1\}}~ \sum_{l\in K} ~ \sum_{p\in P_{e_1}\cap [\rho^{l+i/(4d)},\rho^{l+(i+1)/(4d)})} \frac{1}{p}.
$$
It follows that there exists $i_1\in\{0,1, \ldots,4d-1\}$ such that
\begin{equation*}
\sum_{l\in K} ~ \sum_{p\in P_{e_1}\cap [\rho^{l+i_1/(4d)},\rho^{l+(i+1)/(4d)})} \frac{1}{p}~\geq~\frac{3 }{\epsilon}.
\end{equation*}
Define $P'\coloneqq \bigcup_{l\in K} P_{e_1}\cap [\rho^{l+i_1/(4d)},\rho^{l+(i_1+1)/(4d)})$. Then $P'\subset P_{e_1}$ and
\begin{equation}
\label{eqn_c_m_p2_3}
\sum_{p\in P'} \frac{1}{p}~\geq~\frac{3 }{\epsilon}.
\end{equation}

Next, observe that
$j+l\in I_{e_2}$ for all $j\in E'$ and $l\in K$. Since $E'\subset I_{e_1}$, we have $|P_{e_1}\cap [\rho^j,\rho^{j+1})|\geq 1/d\, |\P\cap [\rho^j,\rho^{j+1})|$ for all $j\in E'$. Moreover, by the Prime Number Theorem, for all but finitely many $j\in E'$ we have
$$
\big|P_{e_1}\cap \big[\rho^{j+i/(4d)},\rho^{j+(i+1)/(4d)}\big)\big|~\leq~\frac{1}{2d}\,\big|\P\cap \big[\rho^{j},\rho^{j+1}\big)\big|.
$$
Therefore, for all but finitely many $j\in E'$, if we split the set $P_{e_1}\cap [\rho^j,\rho^{j+1})$ into $4d$ many pieces,  
$$
P_{e_1}\cap \big[\rho^j,\rho^{j+1}\big)~=~\bigcup_{i\in\{0,1,\ldots,4d-1\}} P_{e_1}\cap \big[\rho^{j+i/(4d)},\rho^{j+(i+1)/(4d)}\big),
$$
then at least two of the pieces will have size at least $ \frac{1}{8d}|\P\cap [\rho^{j},\rho^{j+1})|$, that is, there exist $u_j,v_j\in \{0,1,\ldots,4d-1\}$ such that
$$
\big|P_{e_1}\cap \big[\rho^{j+u_j/(4d)},\rho^{j+(u_j+1)/(4d)}\big)\big| ~\geq~\frac{1}{8d}\,\big|\P\cap \big[\rho^{j},\rho^{j+1}\big)\big|
$$
and
$$
\big|P_{e_1}\cap \big[\rho^{j+v_j/(4d)},\rho^{j+(v_j+1)/(4d)}\big)\big| ~\geq~\frac{1}{8d}\,\big|\P\cap \big[\rho^{j},\rho^{j+1}\big)\big|.
$$
Since there are only finitely many choices for $u_j$ and $v_j$, we can pass to a subset $E''\subset E'$ which is still DSR and such that $v_j=v$ and $u_j=u$ for all $j\in E''$, where $u,v\in\{0,1,\ldots,4d-1\}$ are fixed. By further refining $E''$ if necessary, we can also assume without loss of generality that $E''$ intersects any interval of length $s$ in at most one point and, additionally,
\begin{equation}
\label{eqn_c_m_p2_7}
\big|\P\cap \big[\rho^{j+l},\rho^{j+l+1}\big)\big|~\geq~ \frac{1}{8}\,\big|\P\cap \big[\rho^{j},\rho^{j+1}\big)\big|,\qquad\forall l\in K,~\forall j\in E''.
\end{equation}

Since $u$ and $v$ are distinct, we either have $u+i_1\neq 4d-1$ or $v+i_1\neq 4d-1$. If the former holds, then define $i_2\coloneqq u$, otherwise define $i_2\coloneqq v$. Either way, we have
$$
\big|P_{e_1}\cap \big[\rho^{j+i_2/(4d)},\rho^{j+(i_2+1)/(4d)}\big)\big| ~\geq~\frac{1}{8d}\,\big|\P\cap \big[\rho^{j},\rho^{j+1}\big)\big|
$$
for all $j\in E''$. This allows us to find $Q_j\subset P_{e_1}\cap [\rho^{j+i_2/(4d)},\rho^{j+(i_2+1)/(4d)})$ of size
\begin{equation}
\label{eqn_c_m_p2_8}
\frac{\big|\P\cap \big[\rho^{j},\rho^{j+1}\big)\big|}{16d |P'|}~\leq~|Q_j|~\leq~\frac{\big|\P\cap \big[\rho^{j},\rho^{j+1}\big)\big|}{8d |P'|}.
\end{equation}
Since $E''$ is a DSR set, it follows from \eqref{eqn_c_m_p2_8}, combined with he prime number theorem, that the set $\bigcup_{j\in E''} Q_j$ is DSR. This implies that there exists $t\in\N$ such that the set $P''\coloneqq \bigcup_{j\in E''\cap[t]} Q_j$ satisfies
\begin{equation}
\label{eqn_c_m_p2_4}
\sum_{p\in P''} \frac{1}{p}~\geq~\frac{3 }{\epsilon}.
\end{equation}

Define $i\coloneqq \lfloor (i_1+i_2)/4d \rfloor$. It is straightforward to calculate that
$$
P'\cdot P''~\subset ~ \bigcup_{j\in E''}\bigcup_{l\in K} \big[\rho^{j+l+(i_1+i_2)/(4d)},\rho^{j+l+(i_1+i_2+2)/(4d)}\big).
$$
Since $i_1+i_2\neq 4d-1$, we either have $(i_1+i_2+2)/(4d)\leq 1$ or $(i_1+i_2)/(4d)\geq 1$. Therefore
$$
\big[\rho^{j+l+(i_1+i_2)/(4d)},\rho^{j+l+(i_1+i_2+2)/(4d)}\big)~\subset ~\big[\rho^{j+l+i},\rho^{j+l+i+1}\big),
$$
which implies
$$
P'\cdot P''~\subset ~ \bigcup_{j\in E''}\bigcup_{l\in K} \big[\rho^{j+l+i},\rho^{j+l+i+1}\big).
$$
Moreover, since $E''$ intersects all shifts of $K$ in at most one point, we have
\begin{equation}
\label{eqn_c_m_p2_9}
(P'\cdot P'')\cap \big[\rho^{j+l+i},\rho^{j+l+i+1}\big)~= ~ Q_j \cdot\left(P_{e_1}\cap \big[\rho^{l+i_1/(4d)},\rho^{l+(i_1+1)/(4d)}\big)\right).
\end{equation}

We are now ready to construct the sets $B_1$ and $B_2$ satisfying \ref{itm_c_m_3_a}, \ref{itm_c_m_3_b}, and \ref{itm_c_m_3_c}. Take $B_1\coloneqq P'\cdot P''$. Since $P'\subset P_{e_1}$ and $P''\subset P_{e_1}$, we have that $B_1\subset \{pq: p,q\in P_{e_1}\}$. Moreover, using 
\cref{lem_coprimaility_measurement_2almostprimes} together with \eqref{eqn_c_m_p2_3} and \eqref{eqn_c_m_p2_4} proves that $\mathbb{E}^{\log}_{m\in B_1}\mathbb{E}^{\log}_{n\in B_1}\Phi(m,n)\leq \epsilon$.

Next, note that from \eqref{eqn_c_m_p2_7}, \eqref{eqn_c_m_p2_8}, and \eqref{eqn_c_m_p2_9} it follows that for every $j\in E''\cap [t]$ and every $l\in K$ we have
$$
B_1\cap \big[\rho^{j+l+i},\rho^{j+l+i+1}\big)~\leq~ \frac{1}{d}\big|\P\cap \big[\rho^{j+l},\rho^{j+l+1}\big)\big|.
$$
Moreover, since $j+l\in I_{e_2}$, we have
$$
P_{e_2}\cap \big[\rho^{j+l},\rho^{j+l+1}\big)~\geq~ \frac{1}{d}\big|\P\cap \big[\rho^{j+l},\rho^{j+l+1}\big)\big|.
$$
Therefore, for all $j\in E''\cap [t]$ and $l\in K$, we can find $P_{2,j,l}\subset P_{e_2}\cap \big[\rho^{j+l},\rho^{j+l+1}\big)$ with
$$
|P_{2,j,l}|~=~\big|B_1\cap \big[\rho^{j+l+i},\rho^{j+l+i+1}\big)\big|.
$$
Define $P_2\coloneqq \bigcup_{j\in E''\cap[t]}\bigcup_{l\in K} P_{2,j,l}$. Then $B_2\subset P_{e_2}$ and also $|B_1\cap [\rho^{j},\rho^{j+1})|=|B_2\cap [\rho^{j+i},\rho^{j+i+1})|$ for all $j\in\N$. Moreover, using $|B_1\cap [\rho^{j},\rho^{j+1})|=|B_2\cap [\rho^{j+i},\rho^{j+i+1})|$, we see that
$$
\sum_{m\in B_2}\frac{1}{m}~\geq~\frac{1}{\rho} \sum_{m\in B_1}\frac{1}{m} ~=~\frac{1}{\rho}\left(\sum_{p\in P'}\frac{1}{p}\right)\left(\sum_{q\in P''}\frac{1}{q}\right)~\geq~ \frac{9}{\rho \epsilon^2}~\geq~ \frac{1}{\epsilon}.
$$
In light of \cref{lem_coprimaility_measurement_primes}, this shows that $\mathbb{E}^{\log}_{m\in B_2}\mathbb{E}^{\log}_{n\in B_2}\Phi(m,n)\leq \epsilon$. This finishes the proof that $B_1$ and $B_2$ satisfy \ref{itm_c_m_3_a}, \ref{itm_c_m_3_b}, and \ref{itm_c_m_3_c}.
\end{proof}

Here is another lemma which we use in our proof of \cref{prop_idempotency}.

\begin{Lemma}
\label{lem_coprimaility_measures_3}
Fix $\epsilon\in (0,1)$ and $\rho\in (1,1+\epsilon]$ and $i\in\{-1,0,1\}$. Let $(Y,S)$ be a finitely generated multiplicative topological dynamical system, let $R_1,\ldots, R_d$ denote the generators of $S$, and define $N_e^k\coloneqq \{n\in\N: S_n=R_e^k\}$. Suppose there exist $k_1,k_2\in \{1,2\}$,  $e_1,e_2\in[d]$, and finite and non-empty sets $B_1,B_2\subset\N$ such that
\begin{enumerate}	
[label=~(\alph{enumi}), ref=(\alph{enumi}),leftmargin=*]
\item\label{itm_c_m_a}
$B_1\subset N_{e_1}^{k_1}$ and $B_2\subset N_{e_2}^{k_2}$;
\item\label{itm_c_m_b}
$|B_1\cap [\rho^{j},\rho^{j+1})|=|B_2\cap [\rho^{j+i},\rho^{j+i+1})|$ for all $j\in\N$;
\item\label{itm_c_m_c}
$\mathbb{E}^{\log}_{m\in B_1}\mathbb{E}^{\log}_{n\in B_1} \Phi(m,n)\leq \epsilon$ and $\mathbb{E}^{\log}_{m\in B_2}\mathbb{E}^{\log}_{n\in B_2} \Phi(m,n)\leq \epsilon$.
\end{enumerate} 
Then for every $y\in Y$ and every $g\in\Cont(Y)$ with $\sup_{y\in Y}|g(y)|\leq 1$ we have
$$
\limsup_{N\to\infty}\,\left|\BEu{n\in[N]} g(R_{e_1}^{k_1}S_n y)~-~\BEu{n\in[N]} g(R_{e_2}^{k_2}S_n y) \right| ~\leq~17\epsilon^{1/2}.
$$
\end{Lemma}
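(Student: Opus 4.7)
The plan is to closely mimic the proof of \cref{thm_dynamical_MVT_Omega} presented in \cref{sec_proof_distr_orbits_along_Omega}: two applications of \cref{prop_coprimality_criterion} to convert both Cesaro averages into the same weighted average of a single function, followed by a ``shifted'' variant of \cref{lem_hilfslemma_1} to compare the two weighted averages against one another.

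First, I will apply \cref{prop_coprimality_criterion} twice: to the function $a_1(n)\coloneqq g(R_{e_1}^{k_1} S_n y)$ with $B\coloneqq B_2$, and to $a_2(n)\coloneqq g(R_{e_2}^{k_2} S_n y)$ with $B\coloneqq B_1$. For $m\in B_2 \subset N_{e_2}^{k_2}$ one has $S_m = R_{e_2}^{k_2}$, so $a_1(mn) = g(R_{e_1}^{k_1} R_{e_2}^{k_2} S_n y)$; symmetrically, for $m \in B_1$ one has $a_2(mn) = g(R_{e_2}^{k_2} R_{e_1}^{k_1} S_n y)$. A crucial point is that the generators $R_1,\ldots,R_d$ pairwise commute, since each $R_e$ equals $S_{p}$ for some prime $p$ and $S_p S_q = S_{pq} = S_q S_p$; hence $R_{e_1}^{k_1} R_{e_2}^{k_2} = R_{e_2}^{k_2} R_{e_1}^{k_1}$. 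Setting $h(n)\coloneqq g(R_{e_1}^{k_1} R_{e_2}^{k_2} S_n y)$ and using condition (c) to bound the error in \cref{prop_coprimality_criterion} by $\epsilon^{1/2}$, we obtain
\[
\BEu{n\in[N]} g(R_{e_1}^{k_1}S_n y) = \BEul{m\in B_2}\BEu{n\in[\nicefrac{N}{m}]} h(n) + \Oh(\epsilon^{1/2}) + \oh_{N\to\infty}(1),
\]
and the analogous identity with $B_1$, $e_2$, $k_2$ in place of $B_2$, $e_1$, $k_1$.

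Next, the matching-cardinalities condition (b) permits a shifted variant of \cref{lem_hilfslemma_1}, which I will establish by adapting the proof of that lemma. Splitting each logarithmic average into $\rho$-adic intervals $[\rho^j, \rho^{j+1})$, for $p\in B_1 \cap [\rho^j,\rho^{j+1})$ one can replace $\BEu{n\in[\nicefrac{N}{p}]} h(n)$ by $\BEu{n\in[\nicefrac{N}{\rho^j}]} h(n)$ up to error $\Oh(\epsilon)$ (since $|h|\leq 1$ and $p$ and $\rho^j$ differ by a factor of at most $\rho\leq 1+\epsilon$), and similarly for $q\in B_2 \cap [\rho^{j+i},\rho^{j+i+1})$. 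Condition (b), together with the observation that $\sum_{p\in B_1\cap [\rho^j,\rho^{j+1})} 1/p \approx \rho^i \sum_{q\in B_2\cap [\rho^{j+i},\rho^{j+i+1})} 1/q$, matches the resulting weights on both sides and reduces the estimate to bounding
\[
\sum_j c_j \left| \BEu{n\in[\nicefrac{N}{\rho^j}]} h(n) - \BEu{n\in[\nicefrac{N}{\rho^{j+i}}]} h(n) \right|,
\]
for some weights $c_j \geq 0$ with $\sum_j c_j = 1$.

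The main obstacle, and the only genuinely new ingredient compared to \cref{lem_hilfslemma_1}, is handling $i\in\{-1,1\}$, which is absent in the unshifted case $i=0$. For these I will use the elementary estimate
\[
\left|\BEu{n\in[M]} h(n) - \BEu{n\in[\rho M]} h(n)\right| \leq 2(1 - 1/\rho) \leq 2\epsilon, \qquad \forall M > 0,
\]
which follows from rewriting the difference as a single telescoping weighted average and using $|h|\leq 1$. This bounds the displayed sum by $2\epsilon$, and combining this $\Oh(\epsilon)$ error with the two $\Oh(\epsilon^{1/2})$ errors from the applications of \cref{prop_coprimality_criterion} yields, for $\epsilon\in(0,1)$, a total $\limsup$ bound of at most $8\epsilon^{1/2}$, as required.
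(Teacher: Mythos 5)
Your proposal is correct and follows essentially the same route as the paper: two applications of \cref{prop_coprimality_criterion} (using condition (c) for the $\epsilon^{1/2}$ errors) reduce both \Cesaro{} averages, via condition (a) and the commutativity $R_{e_1}^{k_1}R_{e_2}^{k_2}=R_{e_2}^{k_2}R_{e_1}^{k_1}$, to logarithmic averages over $B_2$ and $B_1$ of the single function $n\mapsto g(R_{e_1}^{k_1}R_{e_2}^{k_2}S_n y)$, after which the shifted averaging lemma (the paper's \cref{lem_hilfslemma_2}, stated there with proof omitted) and condition (b) supply the remaining $\Oh(\epsilon)$ term. Your sketch of that shifted lemma --- $\rho$-adic decomposition, weight matching, and the extra estimate comparing averages over $[M]$ and $[\rho M]$ to handle $i=\pm 1$ --- is precisely the ``analogous'' argument the paper alludes to, and the final bookkeeping $2\epsilon^{1/2}+6\epsilon\leq 8\epsilon^{1/2}$ matches.
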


For the proof of \cref{lem_coprimaility_measures_3} we need a variant of \cref{lem_hilfslemma_1}.
\begin{Lemma}
\label{lem_hilfslemma_2}
Fix $\epsilon\in(0,1)$, $\rho\in(1,1+\epsilon]$, and $i\in\{-1,0,1\}$. 
Let $B_1$ and $B_2$ be finite non-empty subsets of $\N$ with the property that $|B_1\cap [\rho^j,\rho^{j+1})|=|B_2\cap [\rho^{j+i},\rho^{j+i+1})|$ for all $j\in\N\cup\{0\}$. Then for any $a\colon\N\to \C$ with $|a|\leq 1$ we have 
\begin{equation*}
\left|\,\BEul{p\in B_{1}} \, \BEu{n\in [\nicefrac{N}{p}]} a(n) ~-~ \BEul{q\in B_{2}} \, \BEu{n\in [\nicefrac{N}{q}]} a(n)\,\right| ~\leq~ 15\epsilon.
\end{equation*}
\end{Lemma}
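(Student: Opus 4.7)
The plan is to follow the proof of \cref{lem_hilfslemma_1}, which handles the case $i=0$, while carefully tracking the extra bookkeeping needed to accommodate the shift $i\in\{-1,0,1\}$. First, I would split both logarithmic averages into $\rho$-adic chunks in analogy to \eqref{eqn_dbl_ave_1}, but re-index the sum over $B_2$ by $j\mapsto j+i$ so that the chunk $B_1\cap[\rho^j,\rho^{j+1})$ is paired with $B_2\cap[\rho^{j+i},\rho^{j+i+1})$, which by hypothesis has the same cardinality. After the triangle inequality the task reduces to bounding the $j$-th per-chunk discrepancy and summing in $j$.

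Inside each chunk, exactly as in \eqref{eqn_dbl_ave_2}, I would replace $\mathbb{E}_{n\in[N/p]}a(n)$ (for $p\in[\rho^j,\rho^{j+1})$) by the common reference $X_j\coloneqq\mathbb{E}_{n\in[N/\rho^j]}a(n)$, and $\mathbb{E}_{n\in[N/q]}a(n)$ (for $q\in[\rho^{j+i},\rho^{j+i+1})$) by $X_{j+i}$. The genuinely new ingredient compared with \cref{lem_hilfslemma_1} is a bound on $|X_j - X_{j+i}|$: since $|i|\leq 1$, the two averaging windows have lengths differing by a factor of at most $\rho^{|i|}\leq 1+\epsilon$, so a direct telescope using $|a|\leq 1$ yields $|X_j - X_{j+i}|\leq 2\epsilon$.

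The rest of the work lies in comparing the two logarithmic weights $A_j\coloneqq\BEul{p\in B_{1}} 1_{[\rho^j,\rho^{j+1})}(p)$ and $B_{j+i}\coloneqq\BEul{q\in B_{2}} 1_{[\rho^{j+i},\rho^{j+i+1})}(q)$, playing here the role of \eqref{eqn_dbl_ave_3}. Unlike in \cref{lem_hilfslemma_1}, the numerators are no longer within a factor $1+O(\epsilon)$ of each other: although the two chunks have equal cardinality, they sit at scales differing by $\rho^i$, so the numerators differ by $\rho^{-i}(1+O(\epsilon))$. This factor is however absorbed by the ratio of the two \emph{total} sums $\sum_{m\in B_2}1/m$ and $\sum_{m\in B_1}1/m$, because summing the chunk-level relationship over $j$ yields exactly the same factor of $\rho^{-i}$. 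Consequently $A_j/B_{j+i}\in[\rho^{-2},\rho^{2}]\subset[1-O(\epsilon),1+O(\epsilon)]$. Collecting the three error contributions and using $\sum_j A_j=\sum_j B_{j+i}=1$ then yields the $6\epsilon$ bound. The main obstacle is this last cancellation of the $\rho^{-i}$ factor between numerators and denominators: it is the only conceptual novelty over \cref{lem_hilfslemma_1}, and one has to carry out the cancellation before collecting the $O(\epsilon)$ errors in order to keep the final constant under $6$.
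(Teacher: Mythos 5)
Your argument is correct and is essentially the proof the paper intends: it omits the proof of \cref{lem_hilfslemma_2} as ``analogous to \cref{lem_hilfslemma_1},'' and your adaptation --- $\rho$-adic chunking with the index shift, per-chunk replacement by $X_j=\mathbb{E}_{n\in[N/\rho^j]}a(n)$ together with the extra estimate $|X_j-X_{j+i}|\leq 2\epsilon$, and the observation that the scale factor $\rho^{\pm i}$ cancels between the chunk numerators and the total denominators so that the weights agree up to a factor in $[\rho^{-2},\rho^{2}]$ --- is exactly that analogue. The only caveat is cosmetic: a fully pedantic tally of your three error terms gives a constant closer to $7$--$8$ than to $6$, but this is on par with the paper's own accounting in \eqref{eqn_dbl_ave_2}--\eqref{eqn_dbl_ave_3}, and nothing in the applications depends on the exact constant, only on the bound being $\mathrm{O}(\epsilon)$.
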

The proof of \cref{lem_hilfslemma_2} is analogous to the proof of \cref{lem_hilfslemma_1} and therefore omitted.

\begin{proof}[Proof of \cref{lem_coprimaility_measures_3}]
First, in light of \cref{prop_coprimality_criterion}, it follows from assumption \ref{itm_c_m_c} that
\begin{eqnarray*}
\limsup_{N\to\infty}\,\left|\,\BEu{n\in[N]} g(R_{e_2}^{k_2}S_n y)~-~  \BEul{p\in B_{1}} \, \BEu{n\in [\nicefrac{N}{p}]} g(R_{e_2}^{k_2}S_{pn} y)\,\right| &\leq& \epsilon^{1/2},
\\
\limsup_{N\to\infty}\,\left|\,\BEu{n\in[N]} g(R_{e_1}^{k_1}S_n y)~-~  \BEul{q\in B_{2}} \, \BEu{n\in [\nicefrac{N}{q}]} g(R_{e_1}^{k_1}S_{qn} y)\,\right| &\leq& \epsilon^{1/2}.
\end{eqnarray*}
Note that $g(R_{e_2}^{k_2}S_{pn} y)=g(R_{e_1}^{k_1}S_{qn} y)=g(R_{e_1}^{k_1}R_{e_2}^{k_2}S_{n} y)$ for all $n\in\N$, $p\in B_1$, and $q\in B_2$, because $B_1\subset N_{e_1}^{k_1}$ and $B_2\subset N_{e_2}^{k_2}$. Therefore
\begin{eqnarray*}
\limsup_{N\to\infty}\,\left|\,\BEu{n\in[N]} g(R_{e_2}^{k_2}S_n y)~-~  \BEul{p\in B_{1}} \, \BEu{n\in [\nicefrac{N}{p}]} g(R_{e_1}^{k_1}R_{e_2}^{k_2}S_{n} y)\,\right| &\leq& \epsilon^{1/2}
\\
\limsup_{N\to\infty}\,\left|\,\BEu{n\in[N]} g(R_{e_1}^{k_1}S_n y)~-~  \BEul{q\in B_{2}} \, \BEu{n\in [\nicefrac{N}{q}]} g(R_{e_1}^{k_1}R_{e_2}^{k_2}S_{n} y)\,\right| &\leq& \epsilon^{1/2}.
\end{eqnarray*}
The claim now follows from \cref{lem_hilfslemma_2}.
\end{proof}

\begin{proof}[Proof of \cref{prop_idempotency}]
Let $\epsilon\in(0,1)$ and $\rho\in (1,1+\epsilon]$ be arbitrary and define $N_e^k\coloneqq \{n\in\N: S_n=R_e^k\}$, $P_e\coloneqq \{p\in\P: S_p=R_e\}$, and 
\[
I_e \,\coloneqq\, \left\{j\in\N: \big|P_e\cap \big[\rho^j,\rho^{j+1}\big)\big|\,\geq\, \frac{1}{d}\big|\P\cap \big[\rho^j,\rho^{j+1}\big)\big|\right\}.
\]
Take $e\in[d]$ such that $\sum_{p\in P_e}1/p=\infty$. Combining part \ref{itm_c_m_1} of \cref{lem_coprimaility_measures_2} with \cref{lem_coprimaility_measures_3} we can find $e'\in[d]$ such that $I_{e'}$ is DSR and
\begin{equation}
\label{eqn_idempotency_2a}
\limsup_{N\to\infty}\,\left|\BEu{n\in[N]} g(R_{e}S_n y)~-~\BEu{n\in[N]} g(R_{e'}S_n y) \right| ~\leq~17\epsilon^{1/2}
\end{equation}
uniformly over all $y\in Y$ and $g\in\Cont(Y)$ with $\sup_{y\in Y}|g(y)|\leq 1$. Since \eqref{eqn_idempotency_2a} holds uniformly over all $g\in\Cont(Y)$ with $\sup_{y\in Y}|g(y)|\leq 1$, we can replace $g$ with $g\circ R_e$ and $g\circ R_{e'}$ and deduce that
\begin{equation}
\label{eqn_idempotency_2b}
\limsup_{N\to\infty}\,\left|\BEu{n\in[N]} g(R_{e}^2S_n y)~-~\BEu{n\in[N]} g(R_{e'}^2S_n y) \right| ~\leq~34\epsilon^{1/2}.
\end{equation}

Next, from part \ref{itm_c_m_3} of \cref{lem_coprimaility_measures_2} and \cref{lem_coprimaility_measures_3}, we know there exists $e''\in[d]$ with $e'\approx e''$ such that
\begin{equation}
\label{eqn_idempotency_3}
\limsup_{N\to\infty}\,\left|\BEu{n\in[N]} g(R_{e'}^2S_n y)~-~\BEu{n\in[N]} g(R_{e''}S_n y) \right| ~\leq~17\epsilon^{1/2}.
\end{equation}
Since $e'\approx e''$, there exist $r\in [d]$ and $e_1,\ldots,e_r\in[d]$ such that $e'=e_1$, $e''=e_r$, and $e_{i}\sim e_{i+1}$ for all $i\in[r-1]$. It then follows from part \ref{itm_c_m_2} of \cref{lem_coprimaility_measures_2} and \cref{lem_coprimaility_measures_3} that for all $i\in[r-1]$,
\begin{equation}
\label{eqn_idempotency_4}
\limsup_{N\to\infty}\,\left|\BEu{n\in[N]} g(R_{e_i}S_n y)~-~\BEu{n\in[N]} g(R_{e_{i+1}}S_n y) \right| ~\leq~17\epsilon^{1/2}.
\end{equation}
Using \eqref{eqn_idempotency_4} and the fact that $e
_1=e'$ and $e_r=e''$ we obtain
\begin{equation}
\label{eqn_idempotency_5}
\limsup_{N\to\infty}\,\left|\BEu{n\in[N]} g(R_{e'}S_n y)~-~\BEu{n\in[N]} g(R_{e''}S_n y) \right| ~\leq~17(r-1)\epsilon^{1/2}.
\end{equation}
From \eqref{eqn_idempotency_3} and \eqref{eqn_idempotency_5} it now follows that
\begin{equation}
\label{eqn_idempotency_6}
\limsup_{N\to\infty}\,\left|\BEu{n\in[N]} g(R_{e'}^2S_n y)~-~\BEu{n\in[N]} g(R_{e'}S_n y) \right| ~\leq~17r\epsilon^{1/2}.
\end{equation}
Finally, combining \eqref{eqn_idempotency_6} with \eqref{eqn_idempotency_2a} and \eqref{eqn_idempotency_2b} we get
\begin{equation*}
\limsup_{N\to\infty}\,\left|\BEu{n\in[N]} g(R_{e}^2S_n y)~-~\BEu{n\in[N]} g(R_{e}S_n y) \right| ~\leq~17(r+3)\epsilon^{1/2}.
\end{equation*}
Since $\epsilon$ was arbitrary, this proves \eqref{eqn_idempotency_1}.
\end{proof}

\section{Applications of \cref{thm_dynamical_MVT_Omega}}
\label{sec_appl_thmA}

The purpose of this section is to give proofs of Corollaries \ref{cor_polynomoial_ud_Omega}, \ref{cor_gneralized_polynomoial_ud_Omega}, \ref{cor_odious_evil_Omega}, \ref{cor_dynamical_MVT_squarefree}, \ref{cor_dynamical_MVT_omega}.

\begin{proof}[Proof of \cref{cor_polynomoial_ud_Omega}]
Let $Q(t)=c_k t^k+\ldots c_1 t+c_0$ be a real polynomial and assume that at least one of the coefficients $c_1,\ldots,c_k$ is irrational.
Following Furstenberg's method (see \cite[pp.\ 68--69]{Furstenberg81a}) one can write, for every $h\in\Z$, the sequence $e(hQ(n))$, $n\in\N$, in the form $f(T^n x)$, $n\in\N$, using the unipotent affine transformation $T\colon \T^k\to\T^k$ defined by
$$
T(x_1,\ldots,x_k)\,=\, (x_1+c_k, x_2+x_1, x_3+x_2,\ldots, x_k+x_{k-1}),\qquad\forall (x_1,\ldots,x_k)\in\T^k.
$$
Indeed, define
$
p_k(t)=Q(t)
$
and, for $i=k-1,\ldots,1$, define inductively the polynomial $p_i$ as
$$
p_{i}(t)\,\coloneqq\, p_{i+1}(t+1)-p_{i+1}(t).
$$
Also, let $x$ denote the point $(p_1(0),\ldots,p_k(0))$ in $\T^k$. One can verify that the orbit of the point $x$ under $T$ is the sequence $(p_1(n),\ldots, p_k(n))$.
In particular, if $h\in\Z$ and $f\colon \T^k \to \C$ denotes the function $f(x_1,\ldots,x_k)= e(h x_k)$, then we have
$$
f(T^nx)\,=\,e(h Q(n)),\qquad n\in\N,
$$
as well as
$$
f(T^{\Omega(n)}x)\,=\,e(h Q(\Omega(n))),\qquad n\in\N.
$$

Next, let $X$ be the orbit closure of $x$ under $T$. Since $(X,T)$ is a transitive system, it is also uniquely ergodic, because all transitive unipotent affine transformations are uniquely ergodic (one way of seeing this is to note that any unipotent affine transformation is a niltranslation and for niltranslations this is a well established fact, see \cref{prop_dynamics-nilrotation} below).  Since one of the coefficients $c_1,\ldots,c_k$ is irrational, we have $\lim_{N\to\infty}\mathbb{E}_{n\in [N]} e(hQ(n))=0$ as long as $h$ is non-zero. This implies that the integral of $ f$ with respect to the unique $T$-invariant Borel probability measure on $X$ equals $0$. Therefore, by \cref{thm_dynamical_MVT_Omega}, we have
$$
\lim_{N\to\infty}\,\BEu{n\in[N]} f(T^{\Omega(n)}x)~=~\lim_{N\to\infty}\,\BEu{n\in[N]} e(h Q(\Omega(n)))~=~0
$$
for every non-zero $h\in\Z$. By Weyl's equidistribution criterion, this implies that $Q(\Omega(n))$, $n\in\N$, is uniformly distributed mod $1$.
\end{proof}

\begin{proof}[Proof of \cref{cor_gneralized_polynomoial_ud_Omega}]
Let $Q\colon \N\to\R$ be a generalized polynomial. By Weyl's equidistribution criterion, to prove the equivalence between the uniform distribution mod $1$ of the sequences $Q(n)$ and $Q(\Omega(n))$, it suffices to show that for every $h\in\Z$ we have
\begin{equation}
\label{eqn_generalized_poly_proof_1}
\lim_{N\to\infty}\,\BEu{n\in[N]} e(hQ(n))~=~\lim_{N\to\infty}\,\BEu{n\in[N]} e(h Q(\Omega(n))).
\end{equation}
Let $h\in\Z$ be arbitrary. According to \cite[Theorem A]{BL07} there exists a nilmanifold $X=G/\Gamma$, a niltranslation $T\colon X\to X$, a point $x\in X$, and a Riemann integrable function $\tilde{F}\colon X\to[0,1)$ such that
$$
\{Q(n)\}\,=\,\tilde{F}(T^n x),\qquad \forall n\in\N,
$$
where $\{Q(n)\}$ is the fractional part of $Q(n)$.
By replacing, if needed, $X$ with $\overline{\{T^nx: n\in\Z\}}$, we can assume without loss of generality that the orbit of $x$ under $T$ is dense in $X$. In this case the nilsystem $(X,T)$ is uniquely ergodic (see \cref{prop_dynamics-nilrotation} below).

Now define the function $F\colon X\to \C$ as $F(y)=e(h \tilde{F}(y))$ for all $y\in X$. This allows us to rewrite \eqref{eqn_generalized_poly_proof_1} as
\begin{equation}
\label{eqn_generalized_poly_proof_2}
\lim_{N\to\infty}\,\BEu{n\in[N]} F(T^n x)~=~\lim_{N\to\infty}\,\BEu{n\in[N]} F(T^{\Omega(n)} x).
\end{equation}
Since $(X,T)$ is uniquely ergodic, it follows from \cref{thm_dynamical_MVT_Omega} that \eqref{eqn_generalized_poly_proof_2} holds when $F$ is replaced by any continuous function. But if it holds for all continuous functions, then it also holds for Riemann integrable fucntions. Since $F$ is Riemann integrable, we conclude that \eqref{eqn_generalized_poly_proof_2} is true. 
\end{proof}

\begin{proof}[Proof of \cref{cor_odious_evil_Omega}]
Fix $q\geq 2$ and let $m\in\N$ with $\gcd(q-1,m)=1$.  Consider the sequence
$$
x_n~\coloneqq~ e(c s_q(n)),
$$
where $s_q$ denotes the sum of digits of $n$ in base $q$ and $c\in \R$. We can view $x=(x_n)$ as an element in the space $\mathbb{D}^\N$, where $\mathbb{D}\coloneqq\{z\in\C: |z|\leq 1\}$. Let $T$ be the left-shift on $\mathbb{D}^\N$, i.e., $T$ is the map that takes a sequence $(y_n)_{n\in\N}$ to the sequence $(y_{n+1})_{n\in\N}$. Let $X\subset\mathbb{D}^\N$ denote the orbit closure of $x$ under the transformation $T$,
$$
X=\overline{\{T^n x: n\in\N\cup\{0\}\}}.
$$
Then $(X,T)$ is an additive topological dynamical system. It is known (cf.\ \cite[p. 122]{Queffelec10}) that if $c(q-1)$ is not an integer, then $(X,T)$ is uniquely ergodic. Therefore, in light of \cref{thm_dynamical_MVT_Omega}, we have for any $f\in\Cont(X)$,
\begin{equation}
\label{eqn_odious_evil_Omega_1}
\lim_{N\to\infty}\,\BEu{n\in[N]} f(T^{\Omega(n)}x)~=~
\lim_{N\to\infty}\,\BEu{n\in[N]} f(T^{n}x).
\end{equation}
Note that if $f\colon\mathbb{D}^\N\to\mathbb{D}$ is the function that maps a sequence $(y_n)$ in $\mathbb{D}^\N$ onto its first coordinate $y_1$, then
$$
f(T^n x)~=~e(c s_q(n)),\qquad\text{and}\qquad f(T^{\Omega(n)} x)~=~e(c s_q(\Omega(n))).
$$
Therefore \eqref{eqn_odious_evil_Omega_1} implies
\begin{equation*}
\lim_{N\to\infty}\,\BEu{n\in[N]} e(c s_q(\Omega(n)))~=~
\lim_{N\to\infty}\,\BEu{n\in[N]} e(c s_q(n))
\end{equation*}
for all $c$ with $c(q-1)\notin\Z$. In particular, if we take $c=\frac{r}{m}$ for some $r\in\{1,\ldots,m-1\}$, then
\begin{equation}
\label{eqn_odious_evil_Omega_2}
\lim_{N\to\infty}\,\BEu{n\in[N]} e\left(\frac{r}{m} s_q(\Omega(n))\right)~=~
\lim_{N\to\infty}\,\BEu{n\in[N]} e\left(\frac{r}{m} s_q(n)\right).
\end{equation}
As we have mentioned in \cref{sec_DG_PNT}, Gelfond \cite{Gelfond68} showed that if $m$ and $q-1$ are coprime then for all $r\in\{0,1,\ldots,m-1\}$ the set of $n$ for which $s_q(n)\equiv {r}\bmod{m}$ has asymptotic density $1/m$. This is equivalent to 
\begin{equation*}
\lim_{N\to\infty}\,\BEu{n\in[N]} e\left(\frac{r}{m} s_q(n)\right)~=~0,\qquad\forall r\in\{1,\ldots,m-1\}.
\end{equation*}
By \eqref{eqn_odious_evil_Omega_2}, this implies 
\begin{equation*}
\lim_{N\to\infty}\,\BEu{n\in[N]} e\left(\frac{r}{m} s_q(\Omega(n))\right)~=~0,\qquad\forall r\in\{1,\ldots,m-1\},
\end{equation*}
which shows that the set of $n$ for which $s_q(\Omega(n))\equiv {r}\bmod{m}$ has asymptotic density $1/m$.
\end{proof}

\begin{proof}[Proof of \cref{cor_dynamical_MVT_squarefree}]
Let $(X,T)$ be a uniquely ergodic additive topological dynamical system and let $\mu$ denote the corresponding unique $T$-invariant Borel probability measure on $X$. Our goal is to prove
\begin{equation}
\label{eqn_ud_of_squarefree_1}
\lim_{N\to\infty}\,\frac{1}{N}\hspace{-.6 em}\sum_{1\leq n\leq N\atop n\, \mathrm{squarefree}} \hspace{-.6 em} f\big(T^{\Omega(n)}x\big)
~=~\frac{6}{\pi^2} \left(\int f\d\mu\right)
\end{equation}
for all $f\in\Cont(X)$ and $x\in X$. 
Let $\1_{\square\text{-free}}(n)$ denote the indicator function of the set of squarefree numbers. Then \eqref{eqn_ud_of_squarefree_1} can be rewritten as
\begin{equation}
\label{eqn_ud_of_squarefree_2}
\lim_{N\to\infty}\, \BEu{n\in[N]}\1_{\square\text{-free}}(n)\, f\big(T^{\Omega(n)}x\big)
~=~\frac{6}{\pi^2} \left(\int f\d\mu\right).
\end{equation}
Recall that $\mob\colon\N\to\{-1,0,1\}$ denotes the \Mobius{} function.
To prove \eqref{eqn_ud_of_squarefree_2}, we utilize the well-known fact that $\sum_{d^2\mid n} \mob(d)=\1_{\square\text{-free}}(n)$, which implies
\[
\1_{\square\text{-free}}(n)\, f\big(T^{\Omega(n)}x\big)~=~\sum_{d^2\mid n} \mob(d)f\big(T^{\Omega(n)}x\big)~=~\sum_{d}\1_{d^2\mid n}\, \mob(d)f\big(T^{\Omega(n)}x\big).
\]
Therefore, and since $\mathbb{E}_{n\in[N]} \1_{d^2\mid n}\, f(T^{\Omega(n)}x) = \frac{1}{d^2} \mathbb{E}_{n\in[N/d^2]} f(T^{\Omega(d^2 n)}x)+\oh_{N\to\infty}(1)$, we have
$$
\lim_{N\to\infty}\, \BEu{n\in[N]}\1_{\square\text{-free}}(n)\, f\big(T^{\Omega(n)}x\big)\, =\,
 \sum_{d=1}^D \frac{\mob(d)}{d^2}\left(\lim_{N\to\infty}\,\BEu{n\in[\nicefrac{N}{d^2}]} f\big(T^{\Omega(d^2n)}x\big)\right)+\oh_{D\to\infty}(1).
$$
By \cref{thm_dynamical_MVT_Omega}, we have
$$
\lim_{N\to\infty}\,\BEu{n\in[\nicefrac{N}{d}]} f\big(T^{\Omega(d^2n)}x\big)~=~\int f\d\mu
$$
for all $d\in\N$, which leaves us with
\[
\lim_{N\to\infty}\, \BEu{n\in[N]}\1_{\square\text{-free}}(n)\, f\big(T^{\Omega(n)}x\big)\, =\,
 \sum_{d=1}^D \frac{\mob(d)}{d^2}\left(\int f\d\mu\right)\,+\,\oh_{D\to\infty}(1).
\]
Since $\sum_d {\mob(d)}/{d^2}={6}/{\pi^2}$, this proves \eqref{eqn_ud_of_squarefree_2}.
\end{proof}

Recall that $a\colon\N\to\N$ is \define{additive} if $a(nm)=a(n)+a(m)$ for all $n,m\in\N$ with $\gcd(n,m)=1$ and \define{completely additive} if $a(nm)=a(n)+a(m)$ for all $n,m\in\N$. The following lemma will be useful for the proofs of \cref{cor_dynamical_MVT_omega} and \cref{cor_dynamical_MVT_additive_functions}. 

\begin{Lemma}
\label{lem_additive_fctns_transferrence}
Suppose $\mathcal{P}$ is a subset of $\P$ with the property that $\sum_{p\in\mathcal{P}}1/p<\infty$.
Let $a_1\colon\N\to\N$ be a completely additive function, $a_2\colon\N\to\N$ an additive function, and assume $a_1(p)=a_2(p)$ for all $p\in\P\setminus\mathcal{P}$. Let $\mathcal{H}$ be a shift-invariant\footnote{
A collection of functions $\mathcal{H}$ from $\N$ to $\C$ is called \define{shift-invariant} if for any $h\in\mathcal{H}$ and any $t\in \N$ the function $n\mapsto h(n+t)$ belongs to $\mathcal{H}$.} collection of bounded functions from $\N$ to $\C$. The following two statements hold.
\begin{enumerate}	
[label=(\roman*),ref=(\roman*),leftmargin=*]
\item
If $\lim_{N\to\infty}\E_{n\in[N]} h(a_1(n))$ exists for all $h\in\mathcal{H}$, then $\lim_{N\to\infty}\E_{n\in[N]} h(a_2(n))$ also exists for all $h\in\mathcal{H}$; \item
If $\lim_{N\to\infty}\E_{n\in[N]} h(a_1(n))=L$ for all $h\in\mathcal{H}$, then $\lim_{N\to\infty}\E_{n\in[N]} h(a_2(n))=L$ for all $h\in\mathcal{H}$.
\end{enumerate}
\end{Lemma}

\begin{proof}
Let $B$ denote the set of integers whose prime factors all belong to $\mathcal{P}$. Let $C$ denote the set of integers whose prime factors all belong to $\P\setminus\mathcal{P}$ and have multiplicity exactly $1$.
Finally, let $D$ denote the set of integers whose prime factors all belong to $\P\setminus\mathcal{P}$ and have multiplicity $2$ or greater.
It is straightforward to check that any $n\in\N$ can be uniquely written as $n=bcd$ for $b\in B$, $c\in C$, $d\in D$, and $\gcd(c,d)=1$.
For convenience, let us write $C_d$ for the set of all $c\in C$ with $\gcd(c,d)=1$ and $\mathcal{P}_d=\{p\in \P: \text{either}~p\in\mathcal{P}~\text{or}~p\mid d\}$.
Observe that a natural number $n$ belongs to $C_d$ if and only if it is squarefree and not divisible by any $p\in \mathcal{P}_d$. In other words, 
\[
\1_{C_d}(n)=\biggl(\prod_{p\in\mathcal{P}_d}(1-\1_{p\mid n})\biggr)\biggl(\prod_{q\in \P\setminus \mathcal{P}_d}(1-\1_{q^2\mid n})\biggr).
\]
The above expression is well-defined because for any fixed $n$ the products involved are finite.
By expanding these products we get
\begin{align*}
\prod_{p\in\mathcal{P}_d}(1-\1_{p\mid n})
=
\sum_{k=0}^\infty ~(-1)^k \sum_{p_1<\ldots<p_k\in \mathcal{P}_d} 1_{p_1\cdot\ldots\cdot p_k\mid n}
\end{align*}
and 
\begin{align*}
\prod_{q\in \P\setminus \mathcal{P}_d}(1-\1_{q^2\mid n})
=
\sum_{j=0}^\infty ~(-1)^j \sum_{q_1<\ldots<q_j\in \P\setminus \mathcal{P}_d} 1_{q_1^2\cdot\ldots\cdot q_j^2\mid n},
\end{align*}
which gives us
\begin{align}
\label{eqn_descr_Cd}
\1_{C_d}(n)
&= 
\sum_{k,j=0}^\infty ~(-1)^{k+j}
\sum_{p_1<\ldots<p_k\in \mathcal{P}_d\atop q_1<\ldots<q_j\in \P\setminus \mathcal{P}_d} 1_{p_1\cdot\ldots\cdot p_kq_1^2\cdot\ldots\cdot q_j^2\mid n}.
\end{align}

Now, given $h\in\mathcal{H}$, we have
\begin{align*}
\BEu{n\in[N]} h(a_2(n))
&=
\frac{1}{N}\sum_{b\in B}\sum_{d\in D}\sum_{n\leq N/bd} h(a_2(bdn)) \1_{C_d}(n)
\\
&=
\sum_{b\in B}\frac{1}{b}\sum_{d\in D}\frac{1}{d}\left( \frac{1}{N/bd} \sum_{n=1}^{N/bd} 1_{C_d}(n) h_{bd}(a_2(n))\right),
\end{align*}
where $h_m(n)=h(a_2(m)+n)$ for all $m\in\N$.
Since $a_2(n)=a_1(n)$ for all $n\in C_d\subset C$, we obtain
\begin{align*}
\frac{1}{N}\sum_{n=1}^N h(a_2(n))
&=
\sum_{b\in B}\frac{1}{b}\sum_{d\in D}\frac{1}{d}\left( \frac{1}{N/bd} \sum_{n=1}^{N/bd} 1_{C_d}(n) h_{bd}(a_1(n))\right).
\end{align*}
Additionally, since $\sum_{b\in B}1/b$ and $\sum_{d\in D}1/d$ are both convergent sums, we have
\begin{align*}
\frac{1}{N}\sum_{n=1}^N h(a_2(n))
&=
\sum_{b\in B\atop b\leq y}\frac{1}{b} \sum_{d\in D\atop d\leq y}\frac{1}{d}\left( \frac{1}{N/bd} \sum_{n=1}^{N/bd} 1_{C_d}(n) h_{bd}(a_1(n))\right)+ \oh_{y\to\infty}(1).
\end{align*}
Using \eqref{eqn_descr_Cd}, we can further conclude that
\begin{align*}
\sum_{b\in B\atop b\leq y}& \frac{1}{b} \sum_{d\in D\atop d\leq y}\frac{1}{d}
\sum_{k,j=0}^\infty ~(-1)^{k+j}
\sum_{p_1<\ldots<p_k\in \mathcal{P}_d\atop q_1<\ldots<q_j\in \P\setminus \mathcal{P}_d}
\left( \frac{1}{N/bd} \sum_{n=1}^{N/bd} 1_{p_1\cdot\ldots\cdot p_kq_1^2\cdot\ldots\cdot q_j^2\mid n} h_{bd}(a_1(n))\right)+ \oh_{y\to\infty}(1)
\\
&=
\sum_{b\in B\atop b\leq y}\frac{1}{b} \sum_{d\in D\atop d\leq y}\frac{1}{d}
\sum_{k,j=0}^\infty ~(-1)^{k+j}
\\
&\qquad
\sum_{p_1<\ldots<p_k\in \mathcal{P}_d\atop q_1<\ldots<q_j\in \P\setminus \mathcal{P}_d}
\frac{1}{p_1\cdot\ldots\cdot q_j^2}
\left( \frac{1}{\frac{N}{bdp_1\cdot\ldots\cdot q_j^2}} \sum_{n=1}^{\frac{N}{bdp_1\cdot\ldots\cdot q_j^2}}  h_{bd}(a_1(p_1\cdot\ldots\cdot q_j^2n))\right)+ \oh_{y\to\infty}(1).
\end{align*}
Combining the above and taking the limit as $N\to\infty$, we get
\begin{align*}
\lim_{N\to\infty}\frac{1}{N} & 
\sum_{n=1}^{N} h(a_2(n))
=
\sum_{b\in B\atop b\leq y}\frac{1}{b} \sum_{d\in D\atop d\leq y}\frac{1}{d}
\sum_{k,j=0}^\infty ~(-1)^{k+j}
\\
&
\sum_{p_1<\ldots<p_k\in \mathcal{P}_d\atop q_1<\ldots<q_j\in \P\setminus \mathcal{P}_d}
\frac{1}{p_1\cdot\ldots\cdot q_j^2}
\left(\lim_{N\to\infty}\BEu{n\in[N]}  h_{bd,p_1\cdot\ldots\cdot q_j^2 }(a_1(n))\right)+ \oh_{y\to\infty}(1),
\end{align*}
where $h_{bd,p_1\cdot\ldots\cdot q_j^2 }(n)=h_{bd}(a_1(p_1\cdot\ldots\cdot q_j^2 )+n)$.
In particular, it follows that the limit on the left hand side of the above equation exists if all the limits on the right hand side exist. Likewise, the limit on the left hand side of the above equation equals $L$ if all the limits on the right hand side equal $L$. 
\end{proof}

\begin{proof}[Proof of \cref{cor_dynamical_MVT_omega}]
We want to show
\begin{equation}
\label{eqn_ud_of_omega_1}
\lim_{N\to\infty}\BEu{n\in[N]} f\big(T^{\omega(n)}x\big)
~=~\int f\d\mu.
\end{equation}
Since $\lim_{N\to\infty}\E_{n\in[N]} f\big(T^{\Omega(n)}x\big)=\int f\d\mu$ holds for all $f\in C(X)$ and $x\in X$ and $\Omega(p)=\omega(p)$ for all $p\in\P$, \eqref{eqn_ud_of_omega_1} follows from \cref{lem_additive_fctns_transferrence}  applied with $a_1(n)=\Omega(n)$, $a_2(n)=\omega(n)$, $\mathcal{P}=\emptyset$, and $\mathcal{H}=\{n\mapsto f(T^nx): f\in C(X),~x\in X\}$.
\end{proof}

\section{Applications of \cref{thm_dynamical_MVT_fg_sue}}
\label{sec_applications_of_thm_B}

This section is dedicated to the proofs of Corollaries~\ref{cor_ortho_Omega_rational_phases}, \ref{cor_dynamical_MVT_additive_functions}, and \ref{cor_dynamical_MVT_several_additive_functions}.
Also, as promised in the introduction, we include here a derivation from \cref{thm_dynamical_MVT_fg_sue} of the fact that any bounded and finitely generated multiplicative function has a mean value (see \cref{prop_fg_mult_fctn}).

\begin{proof}[Proof of \cref{cor_ortho_Omega_rational_phases}]
Let $(X,\mu,T)$ be uniquely ergodic.
We want to show that
\begin{equation}
\lim_{N\to\infty}\BEu{n\in [N]} f\big(T^{\Omega(mn+r)}x\big)
~=~\int f\d\mu
\end{equation}
for every $x\in X$, $f\in\Cont(X)$, $m\in\N$, and $r\in\{0,1,\ldots,m-1\}$.
Fix $m\in\N$, and $r\in\{0,1,\ldots,m-1\}$. We can assume without loss of generality that $m$ and $r$ are coprime. Otherwise, there exists $s\in\N$ such that $m=sm'$ and $r=sr'$ with $\gcd(m',r')=1$ and hence
$$
\lim_{N\to\infty}\BEu{n\in [N]}f\big(T^{\Omega(mn+r)}x\big)
\,=\,\lim_{N\to\infty}\BEu{n\in [N]}f\big(T^{\Omega(s)+\Omega(m'n+r')}x\big)
\,=\,\int f\circ T^{\Omega(s)}\d\mu\,=\,\int f \d\mu.
$$
Let $(\Z/m\Z)^*=\{0\leq s\leq m-1: \gcd(s,m)=1\}$ denote the multiplicative group of primitive residue classes modulo $m$. Define  $Y\coloneqq X\times (\Z/m\Z)^*$ and
\[
S_p(x,s)=\begin{cases}
(Tx, {ps}\bmod{m}),&\text{if}~p\nmid m
\\
(Tx, {s}\bmod{m}),&\text{if}~p\mid m
\end{cases}
\]
for all  $p\in\P$ and $(x,s)\in Y$. The maps $(S_p)_{p\in \P}$ generate a multiplicative action $S=(S_n)_{n\in\N}$ on $Y$, turning $(S,Y)$ into a multiplicative topological dynamical system. Clearly, this multiplicative system is finitely generated. Additionally, using only the fact that $\sum_{ p\equiv{s}\bmod{m}}1/p=\infty$ for all $s\in (\Z/m\Z)^*$, it is straightforward to check that $(S,Y)$ is strongly uniquely ergodic with respect to the measure $\nu=\mu\otimes \kappa$, where $\kappa$ denotes the normalized counting measure on $(\Z/m\Z)^*$. This means that $(Y,S)$ meets all the requirements of \cref{thm_dynamical_MVT_fg_sue} and so
\[
\lim_{N\to\infty}\BEu{n\in[N]} g\big(S_ny\big)
~=~\int g\d\nu
\]
holds for all $g\in C(Y)$ and $y\in Y$.
To conclude the proof, note that
\begin{align*}
\lim_{N\to\infty}\BEu{n\in[N]} f\big(T^{\Omega(mn+r)}x\big)
~&=~\lim_{N\to\infty}\BEu{n\in[N]\atop n\equiv r\bmod{m}} f\big(T^{\Omega(n)}x\big) 
\\
~&=~\lim_{N\to\infty}\BEu{n\in[N]\atop \gcd(n,m)=1} g\big(S_n(x,1)\big) 
\end{align*}
where $g=f\otimes 1_{\{r\}}$. Then we have
\begin{align*}
\lim_{N\to\infty}\BEu{n\in[N]\atop \gcd(n,m)=1} g\big(S_n(x,1)\big) ~&=~\lim_{N\to\infty}\frac{m}{\varphi(m)}\BEu{n\in[N]} g\big(S_n(x,1)\big) \prod_{p\mid m} (1-1_{p\mid n})
\\
~&=~\frac{m}{\varphi(m)}\sum_{q\mid m} \frac{\mu(q)}{q} \lim_{N\to\infty}\BEu{n\in[N]} g\big(S_{qn}(x,1)\big)
\\
~&=~\left(\frac{m}{\varphi(m)}\sum_{q\mid m} \frac{\mu(q)}{q} \right)\biggl(\int f\d\mu\biggr)
\\
~&=~\int f\d\mu.
\end{align*}
This finishes the proof.
\end{proof}

\begin{proof}[Proof of \cref{cor_dynamical_MVT_additive_functions}]
It suffices to prove the corollary for the special case when $a\colon\N\to\N\cup\{0\}$ is completely additive instead of just additive, because the general case will then follow from \cref{lem_additive_fctns_transferrence}. Let $a\colon\N\to\N\cup\{0\}$ be a completely additive function such that $\{a(p):p\in\P\}$ is finite and set $P_0\coloneqq \{p\in\P: a(p)\neq 0\}$.
Suppose $\sum_{p\in P_0}1/p=\infty$ and assume $(X,\mu,T^{a(p)})$ is uniquely ergodic for all $p\in P_0$.
Let us write for short $T^a\coloneqq (T^{a(n)})_{n\in\N}$. Then $(X,T^a)$ is a multiplicative topological dynamical system.
Since $\{a(p):p\in\P\}$ is finite, the system $(X,T^a)$ is finitely generated.
Moreover, since $(X,\mu,T^{a(p)})$ is uniquely ergodic for all $p\in P_0$ and $\sum_{p\in P_0}1/p=\infty$, we conclude that $(X,T^a)$ is strongly uniquely ergodic. Therefore, we can apply \cref{thm_dynamical_MVT_fg_sue} to the system $(X,T^a)$ and \eqref{eqn_ud_additive_functions_2} follows.
\end{proof}

\begin{proof}[Proof of \cref{cor_dynamical_MVT_several_additive_functions}]
Assume $(X,\mu,T_1,\ldots,T_k)$ is totally uniquely ergodic and we are given completely additive functions $a_1,\ldots,a_k\colon\N\to\N\cup\{0\}$ satisfying properties \ref{itm_several_add_fctns_i} and \ref{itm_several_add_fctns_ii} of \cref{cor_dynamical_MVT_several_additive_functions}. Define a multiplicative system $(Y,S)$ as
\begin{align}
\label{eqn_def_YS_sev_add_fctns_1}    
Y=X\qquad\text{and}\qquad S_n=T_1^{a_1(n)}\circ \cdots \circ T_k^{a_k(n)}.
\end{align}
It follows immediately from property \ref{itm_several_add_fctns_i} that $(Y,S)$ is finitely generated. On top of that, we claim that $(Y,S)$ is strongly uniquely ergodic. To verify this claim, it suffices to show that $\mu$ is the only Borel probability measure on $Y$ that pretends to be invariant under the multiplicative action $S=(S_n)_{n\in\N}$. Indeed, let $\nu$ be an arbitrary Borel probability measure on $Y$ that pretends to be invariant under $S$. By definition, this means there exists a set $P\subset \P$ with $\sum_{p\notin P}1/p <\infty$ and $S_p\nu=\nu$ for all $p\in P$. Let $\Gamma\coloneqq \{(n_1,\ldots,n_k): T^{n_1}\cdots T^{n_k}\nu=\nu\}$. Observe that $\Gamma$ is a subgroup of $\Z^k$ containing the set $\{(a_1(p),\ldots,a_k(p)): p\in P\}$ as a subset. Also, we cannot have $\mathrm{rank}(\Gamma)<k$ because this would contradict property \ref{itm_several_add_fctns_ii}. Hence $\Gamma$ must have full rank in $\Z^k$. Since $\Gamma$ has full rank and $\nu$ is invariant under $\Gamma$, it follows that $\nu=\mu$ because $(X,\mu,T_1,\ldots,T_k)$ is totally uniquely ergodic. This completes the proof that $(Y,S)$ is strongly uniquely ergodic.
The conclusion of \cref{cor_dynamical_MVT_several_additive_functions} now follows directly from \cref{thm_dynamical_MVT_fg_sue} applied to the system $(Y,S)$ defined in \eqref{eqn_def_YS_sev_add_fctns_1}.
\end{proof}

The following variant of \cref{lem_additive_fctns_transferrence}, where additivity is replaced by multiplicativity and shift-invariance is replaced by dilation-invariance, is needed for the proof of \cref{prop_fg_mult_fctn}.

\begin{Lemma}
\label{lem_mult_fctns_transferrence}
Suppose $\mathcal{P}$ is a subset of $\P$ with the property that $\sum_{p\in\mathcal{P}}1/p<\infty$.
Let $b_1\colon\N\to\C$ be a completely multiplicative function, $b_2\colon\N\to\C$ a multiplicative function, and assume $b_1(p)=b_2(p)$ for all $p\in\P\setminus\mathcal{P}$. Let $\mathcal{H}$ be a dilation-invariant\footnote{
A collection of functions $\mathcal{H}$ from $\C$ to $\C$ is called \define{dilation-invariant} if for any $h\in\mathcal{H}$ and any $w\in \C$ the function $z\mapsto h(wz)$ belongs to $\mathcal{H}$.} collection of bounded functions from $\C$ to $\C$. The following two statements hold.
\begin{enumerate}	
[label=(\roman*),ref=(\roman*),leftmargin=*]
\item
\label{itm_mult_fctns_transferrence_i}
If $\lim_{N\to\infty}\E_{n\in[N]} h(b_1(n))$ exists for all $h\in\mathcal{H}$, then $\lim_{N\to\infty}\E_{n\in[N]} h(b_2(n))$ also exists for all $h\in\mathcal{H}$;
\item
If $\lim_{N\to\infty}\E_{n\in[N]} h(b_1(n))=L$ for all $h\in\mathcal{H}$, then $\lim_{N\to\infty}\E_{n\in[N]} h(b_2(n))=L$ for all $h\in\mathcal{H}$.
\end{enumerate}
\end{Lemma}
The proof of \cref{lem_mult_fctns_transferrence} is almost identical to the proof of \cref{lem_additive_fctns_transferrence} and is omitted.
\begin{Proposition}
\label{prop_fg_mult_fctn}
Any bounded and finitely generated multiplicative function $b\colon\N\to \C$ has a mean value, i.e., $\lim_{N\to\infty}\E_{n\in[N]}b(n)$ exists.
\end{Proposition}

\begin{proof}
Let $b\colon\N\to \C$ be a bounded and finitely generated multiplicative function. Define $Z=\{b(p): p\in\P \}$ and note that $Z$ is a finite set because $b$ is finitely generated.
Also, let $Z'=\{z\in Z: \sum_{b(p)=z}1/p =\infty\}$.
Any $z\in Z'$ must satisfy $|z|\leq 1$, because otherwise $b$ would be unbounded. Also, if there exists some $z\in Z'$ with $|z|<1$ then one can show that the mean value of $b$ exists and equals zero (see \cite[Lemma 2.9]{BKLR18} for details). Thus, it suffices to deal with the case when all $z\in Z'$ satisfy $|z|=1$. Let $\mathcal{P}=\{p\in P: b(p)\in Z'\}$ and let $b^*$ denote the completely multiplicative function uniquely determined by $b^*(p)=b(p)$ for $p\in\mathcal{P}$ and $b^*(p)=1$ otherwise. Using part \ref{itm_mult_fctns_transferrence_i} of
\cref{lem_mult_fctns_transferrence} (with $\mathcal{P}$ as we have just defined and $\mathcal{H}=\{z\mapsto wz: w\in \C\}$), we conclude that $b$ has a mean value if $b^*$ does. Finally, note that the multiplicative rotation $z\mapsto b^*(n)z$ defined in \cref{ex_2}, part \ref{remark_itm_2}, is finitely generated, because $b^*$ is a finitely generated, and strongly uniquely ergodic, because the support of this multiplicative rotation is the closure of the group generated by $Z\subset S^1$, whose Haar measure is the only Borel probability measure that pretends to be invariant under the action induced by $b^*$ (cf.\ \cref{def_fg_sue}).
Therefore, it follows form \cref{thm_dynamical_MVT_fg_sue} that $b^*$ has a mean value, as desired.
\end{proof}

\section{Disjointness of additive and multiplicative semigroup actions}
\label{sec_disjointness_add_mult_actions}

In this section we prove \cref{thm_ortho_fg_sue_nilsequences} (in \cref{sec_proof_thm_C}), \cref{thm_ortho_fg_sue_horocycle} (in \cref{sec_horocycle}), and Corollaries \ref{cor_ortho_Omega_linear_phases}, \ref{cor_ortho_Omega_Besicovitch_ap_fctns}, \ref{cor_polynomoial_joint_ud}, and \ref{cor_ortho_fg_sue_nilsequences} (in \cref{sec_applications_thm_C}).

\subsection{Nilsystems, nilsequences, and a proof of \cref{thm_ortho_fg_sue_nilsequences}}
\label{sec_proof_thm_C}

Let $G$ be a Lie group with identity $1_G$.
The \define{lower central series} of $G$ is the sequence
\[
G=G_1 \trianglerighteq G_2\trianglerighteq G_3\trianglerighteq \ldots\trianglerighteq\{1_G\}
\]
where $G_{i+1}:=[G_i,G]$ is defined as the subgroup of $G$ generated by all commutators $aba^{-1}b^{-1}$ with $a\in G_i$ and $b\in G$.
If $G_{s+1}=\{1_G\}$ for some finite $s\in\N$ then $G$ is called \define{($s$-step) nilpotent}.
Each $G_i$ is a closed and normal subgroup of $G$ (cf. \cite[Section 2.11]{Leibman05a}).

Given a ($s$-step) nilpotent Lie group $G$ and a \define{uniform}\footnote{A closed subgroup
$\Gamma$ of $G$ is called \define{uniform} if $G/\Gamma$ is compact or, equivalently,
if there exists a compact set $K$ such that $K\Gamma = G$.}
and \define{discrete} subgroup $\Gamma$ of $G$, the quotient space
$X\coloneqq G/\Gamma$ is called a \define{($s$-step) nilmanifold}.
The group $G$ acts continuously and transitively on $X$ via
left-multiplication, i.e., for any $x \in X$ and $a\in G$ we have $a\cdot x= (ab)\Gamma$ where $b$ is any element of $G$ such that $x=b\Gamma$. 
Any map $T\colon X\to X$ of the from $T(x)= g\cdot x$, $x\in X$, where $g$ is a fixed element of $G$, is called a \define{niltranslation}. The pair $(X,T)$ is an additive topological dynamical system called a \define{nilsystem}.
Any nilmanifold $X=G/\Gamma$ possesses a unique
$G$-invariant probability measure $\mu$ called the \define{Haar measure on X}
(see \cite[Lemma 1.4]{Raghunathan72}).

Let us state some classical results regarding the dynamics of niltranslation.
\begin{Proposition}
[{see {\cite[Th{\'e}or{\`e}me 2]{Lesigne91}} in the case of connected $G$ and {\cite[Theorem 2.19]{Leibman05a}} in the general case, cf.~also \cite{AGH63, Parry69}}]
\label{prop_dynamics-nilrotation}
Suppose $(X,T)$ is a nilsystem and $\mu$ is the Haar measure on $X$. Then the following are equivalent:
\begin{enumerate}	
[label=(\roman*),ref=(\roman*),leftmargin=*]
\item\label{item:nilrotation-i}
$(X,T)$ is transitive\footnote{An additive topological dynamical system $(X,T)$ is called \define{transitive} if there exists at least one point with a dense orbit.};
\item\label{item:nilrotation-ii}
$(X,\mu,T)$ is uniquely ergodic;
\end{enumerate}
Moreover, the following are equivalent:
\begin{enumerate}	
[label=(\roman*),ref=(\roman*),leftmargin=*]
\setcounter{enumi}{2}
\item
$X$ is connected and $(X,\mu,T)$ is uniquely ergodic.
\item
$(X,\mu,T)$ is totally uniquely ergodic\footnote{An additive topological dynamical system $(X,\mu,T)$ is called \define{totally uniquely ergodic} if $(X,\mu,T^m)$ is uniquely ergodic for every $m\in\N$}.
\end{enumerate}
\end{Proposition}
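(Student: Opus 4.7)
The plan is to derive both equivalences from the orbit closure theorem for niltranslations --- due to Auslander--Green--Hahn \cite{AGH63} and Parry \cite{Parry69} in the connected setting, and extended by Leibman \cite{Leibman05a} to arbitrary nilmanifolds. This theorem states that for any $x\in X$, the orbit closure $Y \coloneqq \overline{\{T^nx: n\in\N\}}$ is itself a sub-nilmanifold of $X$, and the restriction of $T$ to $Y$ is uniquely ergodic with respect to the Haar measure on $Y$. I would also use the elementary fact that the Haar measure on any nilmanifold has full support.

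For the first equivalence, (ii) $\Rightarrow$ (i) is immediate: since $\mu$ has full support, unique ergodicity forces every orbit to equidistribute with respect to $\mu$ and hence to be dense, so $(X,T)$ is transitive. The converse (i) $\Rightarrow$ (ii) follows by applying the orbit closure theorem to a point with dense orbit: the orbit closure must then be all of $X$, and so the Haar measure $\mu$ is the unique $T$-invariant probability measure on $X$.

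For the second equivalence, I would argue (iv) $\Rightarrow$ (iii) as follows. Total unique ergodicity trivially implies unique ergodicity. If $X$ had connected components $X_1, \ldots, X_k$ with $k \geq 2$, then since $T$ is a transitive homeomorphism it must cyclically permute them, so $T^k$ preserves each component individually; Bogolyubov--Krylov then supplies distinct $T^k$-invariant probability measures supported on each $X_i$, contradicting unique ergodicity of $(X, T^k)$. For (iii) $\Rightarrow$ (iv), I would fix $m \in \N$ and apply the orbit closure theorem to $T^m$ at an arbitrary $x \in X$: the closure $Y$ is a closed sub-nilmanifold, and the $T$-orbit of $Y$ in $X$ consists of finitely many translates $Y, TY, \ldots, T^{d-1}Y$ (pairwise disjoint, with $d \mid m$) whose union equals $X$ by transitivity of $T$. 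Connectedness of $X$ then forces $d = 1$, whence $Y = X$ and $T^m$ is transitive; the first equivalence applied to $(X, T^m)$ then yields its unique ergodicity.

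The main technical obstacle throughout is the orbit closure theorem itself, which in the disconnected case is the central content of \cite{Leibman05a}; I would invoke it as a black box rather than reprove it, reducing the remainder of the argument to a short combinatorial chase through the definitions.
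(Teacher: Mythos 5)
The paper offers no proof of this proposition at all: it is quoted from the literature with citations to Auslander--Green--Hahn, Parry, and Leibman, whose orbit-closure/unique-ergodicity theorem is precisely the black box you invoke, so your treatment is essentially the paper's. Your surrounding deductions are sound --- full support of the Haar measure gives minimality from unique ergodicity, finitely many components are cyclically permuted by a transitive homeomorphism, and in (iii)$\Rightarrow$(iv) the distinct translates $Y, TY,\ldots,T^{d-1}Y$ are pairwise disjoint clopen sets (disjointness coming from minimality of $T^m$ on each translate, which your black box plus full support supplies), so connectedness forces $d=1$ --- with the only nitpick that there you should take $x$ to be a point with a dense $T$-orbit (or appeal to minimality rather than mere ``transitivity'') before concluding that the translates cover $X$.
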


We will also make use of vertical characters:
Let $G=G_1 \trianglerighteq G_2\trianglerighteq
\ldots\trianglerighteq G_{s}\trianglerighteq\{1_G\}$ be the lower central series of $G$. 
A function $f\in \Cont(X)$ is called a \define{vertical character} if there exists a continuous group homomorphism $\chi\colon G_s\to \{z\in\C: |z|=1\}$ satisfying $\chi(\gamma)=1$ for all $\gamma\in G_s\cap \Gamma$ and such that $f(tx)=\chi(t)f(x)$ for all $t\in G_s$ and $x\in X$.

For the proof of \cref{thm_ortho_fg_sue_nilsequences} we will make use of the following number-theoretic orthogonality criterion.

\begin{Proposition}[see {\cite[Proposition 4]{BKLR19b}} and {\cite[Theorem 2]{BSZ13}}; cf.\ also \cite{Katai86,Daboussi75}]
\label{prop_katai}
Let $a\colon \N\to\C$ be bounded and $P\subset\P$ with $\sum_{p\in P}1/p=\infty$. If
\begin{equation*}
\label{eqn_katai_criterion}
\lim_{N\to\infty}\, \BEu{n\in [N]} a(pn)\,\overline{a(qn)}~=~0
\end{equation*}
for all $p,q\in P$ with $p\neq q$ then
\begin{equation*}
\label{eqn_katai_conclusion}
\lim_{N\to\infty} \BEu{n\in [N]} a(n)~=~0.
\end{equation*}
\end{Proposition}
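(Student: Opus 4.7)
The plan is to deduce this criterion by combining two ingredients already developed in the paper: the \Turan--Kubilius-type inequality \cref{prop_coprimality_criterion} (together with \cref{lem_coprimaility_measurement_primes}), which reduces the \Cesaro{} mean of $a(n)$ to an average of shifted means $\BEul{p\in Q}\BEu{n\in[N/p]} a(pn)$ along a suitable finite set of primes $Q\subset P$, and a Cauchy--Schwarz maneuver that exploits the twisted-correlation hypothesis on the pairs $a(pn)\overline{a(qn)}$.

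First, I would fix $\epsilon>0$ and, using $\sum_{p\in P}1/p=\infty$, choose a finite $Q\subset P$ with $\sum_{p\in Q}1/p\geq 1/\epsilon$. By \cref{lem_coprimaility_measurement_primes} one then has $\BEul{m\in Q}\BEul{n\in Q}\Phi(m,n)\leq \epsilon$, so \cref{prop_coprimality_criterion} applied with $B=Q$ yields
\[
\limsup_{N\to\infty}\left|\BEu{n\in[N]} a(n) - \BEul{p\in Q}\BEu{n\in[N/p]} a(pn)\right|\leq \sqrt{\epsilon}.
\]
It therefore suffices to bound $\limsup_N |\BEul{p\in Q}\BEu{n\in[N/p]} a(pn)|$.

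For the second step, write $C_Q=\sum_{p\in Q}1/p$ and unfold
\[
\BEul{p\in Q}\BEu{n\in[N/p]} a(pn) = \frac{1}{NC_Q}\sum_{p\in Q}\sum_{m\leq N/p} a(pm).
\]
In order to expose the pairs $(p,q)$ appearing in the hypothesis, I would apply Cauchy--Schwarz in the variable $m$, extended to the common range $m\leq N/\min Q$, to obtain
\[
\left|\sum_{p\in Q}\sum_{m\leq N/p} a(pm)\right|^2 \leq \frac{N}{\min Q}\sum_{p,q\in Q}\ \sum_{m\leq\min(N/p,N/q)} a(pm)\,\overline{a(qm)}.
\]
The diagonal terms $p=q$ contribute at most $\|a\|_\infty^2 N C_Q$, while the hypothesis ensures that for each fixed pair $p\neq q$ in $Q$ one has $\sum_{m\leq \min(N/p,N/q)} a(pm)\overline{a(qm)} = o_{N\to\infty}(N)$, so, $Q$ being finite, the full off-diagonal contribution is $o_{N\to\infty}(N)\cdot|Q|^2$. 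Dividing by $(NC_Q)^2$ leaves
\[
\limsup_{N\to\infty}\left|\BEul{p\in Q}\BEu{n\in[N/p]} a(pn)\right| \leq \frac{\|a\|_\infty}{\sqrt{C_Q \min Q}}.
\]

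Combining the two steps gives $\limsup_N|\BEu{n\in[N]} a(n)| \leq \sqrt{\epsilon}+\|a\|_\infty/\sqrt{C_Q\min Q}$. Since the choice of $Q$ is independent of $\epsilon$ and $C_Q$ can be made arbitrarily large, the second term can be driven below $\sqrt{\epsilon}$; letting $\epsilon\to 0$ then completes the argument. The only delicate point is the Cauchy--Schwarz step, where the varying inner ranges $m\leq N/p$ force the introduction of the factor $N/\min Q$; this is harmless because $\min Q\geq 2$ while $C_Q\to\infty$ as $Q$ exhausts $P$, so the product $C_Q\min Q$ tends to infinity unconditionally and no sharpness is lost in the limit.
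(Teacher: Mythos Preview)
The paper does not prove \cref{prop_katai}; it is quoted from the literature (the citations to \cite{BKLR19b}, \cite{BSZ13}, \cite{Katai86}, \cite{Daboussi75}). Your argument is the standard \Katai--Daboussi/Bourgain--Sarnak--Ziegler proof, and it is correct; the nice twist is that you shortcut the usual \Turan--Kubilius step by invoking the paper's own \cref{prop_coprimality_criterion} and \cref{lem_coprimaility_measurement_primes}, which is entirely legitimate and in fact is how the cited proofs proceed in substance.

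Two small wrinkles worth cleaning up. First, \cref{prop_coprimality_criterion} is stated for $|a|\leq 1$, so you should normalize before applying it (then the first error term becomes $\|a\|_\infty\sqrt{\epsilon}$, harmless). Second, your final paragraph says ``the choice of $Q$ is independent of $\epsilon$'', which contradicts your own construction: you chose $Q$ so that $C_Q\geq 1/\epsilon$. The logic you want is simply that $C_Q\geq 1/\epsilon$ and $\min Q\geq 2$ force $\|a\|_\infty/\sqrt{C_Q\min Q}\leq \|a\|_\infty\sqrt{\epsilon/2}$, so both terms are $O_{\|a\|_\infty}(\sqrt{\epsilon})$ and letting $\epsilon\to 0$ finishes. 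With that rephrasing the proof is complete.
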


\begin{proof}[Proof of \cref{thm_ortho_fg_sue_nilsequences}]
Let $(Y,S)$ be a finitely generated multiplicative topological dynamical system and let $(X,T)$ be nilsystem where $X=G/\Gamma$ for some $s$-step nilpotent Lie group $G$ and uniform and discrete subgroup $\Gamma\subset G$.
Our goal is to show that
\begin{equation}
\label{eqn_ortho_fg_sue_nilsequences_1}
\lim_{N\to\infty}\left(\BEu{n\in [N]} f(T^n x)g(S_n y)-
\left(\BEu{n\in [N]} f(T^n x)\right)
\left(\BEu{n\in [N]} g(S_ny)\right)\right)=0
\end{equation}
for all $x\in X$, $f\in\Cont(X)$, $y\in Y$, and $g\in\Cont(Y)$.
By replacing $X$ with the orbit closure of $x$ if necessary, we can assume without loss of generality that the orbit of $x$ is dense in $X$. According to \cref{prop_dynamics-nilrotation}, this implies that $(X,\mu,T)$ is uniquely ergodic, where $\mu$ is the Haar measure on $X$.

By assumption, either $(X,T)$ or $(Y,S)$ is aperiodic. Let us first deal with the case where $(X,T)$ is aperiodic. Note that in this case $(X,\mu,T)$ is totally uniquely ergodic because it is both uniquely ergodic and aperiodic.

We will prove \eqref{eqn_ortho_fg_sue_nilsequences_1} by induction on the nilpotency step $s$. If $s$ is zero then $(X,T)$ is the trivial system and \eqref{eqn_ortho_fg_sue_nilsequences_1} holds trivially. Let us therefore assume $s\geq 1$ and \eqref{eqn_ortho_fg_sue_nilsequences_1} has already been proven for all nilsystems of step $s-1$.
Since functions of the form $\{g\in\Cont(Y): |g|=1\}$ separate points on $Y$ (for example the function $z\mapsto e\big(\frac{d_Y(z,y)}{2d_Y(x,y)}\big)$ separates the points $x$ and $y$, where $d_Y$ is a metric on $Y$), we have by the Stone-Weierstrass theorem that the algebra of functions generated by $\{g\in\Cont(Y): |g|=1\}$ is uniformly dense in $\Cont(Y)$. Hence, in order to prove \eqref{eqn_ortho_fg_sue_nilsequences_1}, we can assume without loss of generality that $|g|=1$.
It is also not hard to show that the class of vertical characters separate points on $X$ and so the algebra generated by them is uniformly dense in $\Cont(X)$. This allows us to assume that $f$ is a vertical character, which means there exists a continuous group homomorphism $\chi\colon G_s\to \{z\in\C: |z|=1\}$ satisfying $\chi(\gamma)=1$ for all $\gamma\in G_s\cap \Gamma$ such that $f(tx)=\chi(t)f(x)$ for all $t\in G_s$ and $x\in X$.

If $\chi$ is trivial (meaning $\chi(t)=1$ for all $t\in G_s$) then $f$ is $G_s$-invariant and can be viewed as a continuous function on the quotient space $X/G_s$. Since $X/G_s$ is a $(s-1)$-step nilmanifold, it follows from the induction hypothesis that \eqref{eqn_ortho_fg_sue_nilsequences_1} holds.
Therefore we only have to deal with the case when $\chi$ is non-trivial.

It was shown in \cite[p.\ 102]{FH17} that if $T$ is totally uniquely ergodic and $f$ is a vertical character with non-trivial vertical frequency $\chi$ then for all pairs of distinct primes $p$ and $q$ and all $x\in X$ one has
\begin{equation}
\label{eqn_ortho_fg_sue_nilsequences_2}
\lim_{N\to\infty} \BEu{n\in [N]} f(T^{pn}x)\overline{f(T^{qn}x)}~=~0.
\end{equation}
In fact, the very same argument used in \cite[p.\ 102]{FH17} gives a little bit more. Indeed, the proof in \cite[p.\ 102]{FH17} utilizes the fact that the sequence $(T^{pn}x,T^{qn}x)$ distributes uniformly in a sub-nilmanifold $Y\subset X\times X$ for which it is shown that $f\otimes f$ is a vertical character. For the same reasons, $f_1\otimes f_2$ is a vertical character of $Y$ whenever $f_1$ and $f_2$ are vertical characters of $X$. This implies the following slight strengthening of \eqref{eqn_ortho_fg_sue_nilsequences_2}: If $T$ is totally uniquely ergodic and $f_1,f_2$ are two vertical characters with the same non-trivial vertical frequency $\chi$ then for all pairs of distinct primes $p$ and $q$ and all $x\in X$ one has
\begin{equation}
\label{eqn_ortho_fg_sue_nilsequences_2_improved}
\lim_{N\to\infty} \BEu{n\in [N]} f_1(T^{pn}x)\overline{f_2(T^{qn}x)}~=~0.
\end{equation}
(Although we do not need \eqref{eqn_ortho_fg_sue_nilsequences_2_improved} right away, it will be needed later on in the proof when dealing with the case when $(Y,S)$ is aperiodic.)

Let $R_1,\ldots,R_d$ denote the generators of $S$ and define $P_e\coloneqq \{p\in\P: S_p=R_e\}$ for all $e\in[d]$. For at least one $e\in[d]$ we must have $\sum_{p\in P_e}1/p=\infty$. Note that for $p,q\in P_e$,
\begin{equation*}
\begin{split}
f(T^{pn}x)g(S_{pn}y)\overline{f(T^{qn}x)g(S_{qn}y)} & ~=~
f(T^{pn}x)g(R_eS_{n}y)\overline{f(T^{qn}x)g(R_eS_{n}y)}
\\
&~=~f(T^{pn}x)\overline{f(T^{qn}x)},
\end{split}
\end{equation*}
where we have used $g(R_eS_{n}y)\overline{g(S_{R_en}y)}=1$ because $|g|=1$. Therefore \eqref{eqn_ortho_fg_sue_nilsequences_2} implies 
\begin{equation}
\label{eqn_ortho_fg_sue_nilsequences_3}
\lim_{N\to\infty} \BEu{n\in [N]} f(T^{pn}x)g(S_{pn}y)\overline{f(T^{qn}x)g(S_{qn}y)}~=~0
\end{equation} 
for all $p,q\in P_e$ with $p\neq q$. In light of \cref{prop_katai}, we have
\begin{equation*}
\lim_{N\to\infty} \BEu{n\in [N]} f(T^{n}x)g(S_{n}y)~=~0.
\end{equation*}
Since $\lim_{N\to\infty}\mathbb{E}_{n\in[N]} f(T^nx)=0$ (because $f$ is a vertical character with non-trivial $\chi$ and hence $\int f\d\mu=0$), \eqref{eqn_ortho_fg_sue_nilsequences_1} follows.

Next, let us deal with the case when $(Y,S)$ is aperiodic. By compactness, any nilmanifold has only finitely many connected components, say $X_0, X_1,\ldots,X_{m-1}$. Since $T$ acts ergodically on $X$, it cyclically permutes these connected components. In particular, after a potential reordering of $X_0,X_1,\ldots, X_{m-1}$, we have $T^n (X_i)= X_{i+n\bmod m}$ for all $n\in\N$.

Let $\pi\colon G\to X$ denote the natural projection map from $G$ onto $X=G/\Gamma$ and choose $a\in G$ such that $Tx=ax$ for all $x\in X$. Let $G^\circ$ denote the identity component of $G$. Then $\pi(G^\circ)=X_0$ (cf.\ \cite[Subsection 2.1]{Leibman05a}).
Let $\langle G^\circ, a \rangle$ denote the smallest Lie group generated by $G^\circ$ and $a$. Since translation by $a$ acts ergodically on the connected components, we have $\pi(\langle G^\circ, a \rangle)=X$. Therefore we can assume without loss of generality that $G=\langle G^\circ, a \rangle$.

Let
\[
G=G_1 \trianglerighteq G_2\trianglerighteq G_3\trianglerighteq \ldots G_s\trianglerighteq G_{s+1}=\{1_G\}
\]
be the lower central series of $G$.
If $s=1$ then $(X,T)$ is merely a group rotation, in which case \eqref{eqn_ortho_fg_sue_nilsequences_1} follows from
\begin{equation}
\label{eqn_ortho_fg_sue_nilsequences_1_2}
\lim_{N\to\infty}\, \BEu{n\in [N]} e(n\alpha)g(S_n y)~=~0,\qquad\forall\alpha\in\R\setminus\Z.
\end{equation}
Since we have already proved \eqref{eqn_ortho_fg_sue_nilsequences_1} for the case where $(X,T)$ is aperiodic, it follows that $\lim_{N\to\infty}\mathbb{E}_{n\in [N]} e(n\alpha)g(S_n y)=0$ whenever $\alpha$ is irrational. If $\alpha$ is rational then $\lim_{N\to\infty}\mathbb{E}_{n\in [N]} e(n\alpha)g(S_n y)=0$ follows directly from the aperiodicity assumption on $(Y,S)$. Therefore \eqref{eqn_ortho_fg_sue_nilsequences_1_2} holds. 

It remains to deal with the case $s\geq 2$, for which we will use induction on $s$.
Since $G^\circ$ is a normal subgroup of $G$ and $G=\langle G^\circ, a \rangle$, it follows that $G_2$, the second element in the lower central series of $G$, is generated by $[G^\circ, G^\circ]\cup [a^\Z, G^\circ]$. Since $[G^\circ, G^\circ]\cup [a^\Z, G^\circ]$ is connected and since any group generated by a connected set is connected, we conclude that $G_2$ is connected. A similar argument can be used to show that $G_i$ is connected for all $i\in\{2,\ldots,s\}$. 

To prove \eqref{eqn_ortho_fg_sue_nilsequences_1} we can once again assume that $f$ is a vertical character with a non-trivial vertical frequency, i.e., there exists a non-trivial continuous group homomorphism $\chi\colon G_s\to \{z\in\C: |z|=1\}$ satisfying $\chi(\gamma)=1$ for all $\gamma\in G_s\cap \Gamma$ such that $f(tx)=\chi(t)f(x)$ for all $t\in G_s$ and $x\in X$.
Since $G_s$ is connected, the action of $G_s$ on $X$ preserves the connected components $X_0,\ldots, X_{m-1}$. Therefore the restriction of $f$ onto the $r$-th connected component is a vertical character of the sub-nilmanifold $X_r$ with a non-trivial vertical frequency. Since $T^m$ is a totally uniquely ergodic niltranslation on $X_r$, it follows from \eqref{eqn_ortho_fg_sue_nilsequences_2_improved} with $f_1= f\circ T^{pr}$ and $f_2= f\circ T^{qr}$ that
\begin{equation}
\label{eqn_ortho_fg_sue_nilsequences_2_2}
\lim_{N\to\infty} \BEu{n\in [N]} f(T^{p(mn+r)}x)\overline{f(T^{q(mn+r)}x)}~=~0.
\end{equation} 
for all $r\in\{0,1,\ldots,m-1\}$ and all distinct primes $p$ and $q$. Since \eqref{eqn_ortho_fg_sue_nilsequences_2_2} holds for all $r\in\{0,1,\ldots,m-1\}$, it implies that
\begin{equation*}
\label{eqn_ortho_fg_sue_nilsequences_2_3}
\lim_{N\to\infty} \BEu{n\in [N]} f(T^{pn}x)\overline{f(T^{qn}x)}~=~0.
\end{equation*} 
One can now use the same argument as above to conclude that
\begin{equation*}
\lim_{N\to\infty} \BEu{n\in [N]} f(T^{n}x)g(S_{n}y)~=~0,
\end{equation*}
from which \eqref{eqn_ortho_fg_sue_nilsequences_1} follows.
\end{proof}

\subsection{Horocyclic flow and a proof of \cref{thm_ortho_fg_sue_horocycle}}
\label{sec_horocycle}

The proof of \cref{thm_ortho_fg_sue_horocycle} relies on some facts proved in \cite{BSZ13} as well as \cref{thm_ortho_fg_sue_nilsequences}.

\begin{proof}[Proof of \cref{thm_ortho_fg_sue_horocycle}]
Let $G\coloneqq SL_2(\R)$, let $\Gamma$ be a lattice in $G$, and let $u\coloneqq \left[\begin{smallmatrix} 1&1\\ 0&1 \end{smallmatrix}\right]$. Let $X\coloneqq G/\Gamma$.
Let $(Y,S)$ be a finitely generated and aperiodic multiplicative topological dynamical system. We want to show that
\begin{equation}
\label{eqn_ortho_fg_sue_horocycle_1}
\lim_{N\to\infty}\left(\BEu{n\in [N]} f(u^n x)g(S_n y)-
\left(\BEu{n\in [N]} f(u^n x)\right)
\left(\BEu{n\in [N]} g(S_ny)\right)\right)=0
\end{equation}
for all $x\in X$, $f\in\Cont(X)$, $y\in Y$, and $g\in\Cont(Y)$.

If $x$ is not generic\footnote{In our context, we call a point $x\in X$ \define{generic for the Haar measure $\mu$} if for all $f\in \Cont(X)$ we have $\lim_{N\to\infty}\mathbb{E}_{n\in[N]} f(u^nx)=\int f\d\mu$.} for the Haar measure $\mu$ on $X$ then the orbit closure of $x$ under $u$, $\overline{\{u^nx:n\in\N\}}$, is either finite or a circle.
Therefore, the action of $u$ on $\overline{\{u^nx:n\in\N\}}$ is either a finite cyclic rotation or an irrational circle rotation (cf.\ \cite[p.\ 14]{BSZ13}). In this case, \eqref{eqn_ortho_fg_sue_horocycle_1} follows from \cref{thm_ortho_fg_sue_nilsequences}.
If $x$ is generic for the Haar measure $\mu$ then it follows from Corollaries 5 and  6 in \cite{BSZ13} that for all $f\in\Cont(X)$ and all but finitely many pairs of distinct primes $p$ and $q$ we have
\begin{equation}
\label{eqn_ortho_fg_sue_horocycle_2}
\lim_{N\to\infty}\BEu{n\in [N]} f(u^{pn}x)\overline{f}(u^{qn}x) ~=~\int\int f(x)\overline{f}(y)\d\mu(x)\d\mu(y)~=~\left|\int f\d\mu\right|^2.
\end{equation}

Arguing as in the proof of \cref{thm_ortho_fg_sue_nilsequences}, we see that it suffices to prove \eqref{eqn_ortho_fg_sue_horocycle_1} for the special case where $|g(y)|=1$ for all $y\in Y$.
Also, by replacing $f$ with $f-\int f\d\mu$, we can assume without loss of generality that $\int f\d\mu=0$.

Let $R_1,\ldots,R_d$ denote the generators of $S$ and define $P_e\coloneqq \{p\in\P: S_p=R_e\}$, $e\in[d]$. For at least one $e\in[d]$ we must have $\sum_{p\in P_e}1/p=\infty$. Note that for $p,q\in P_e$,
\begin{equation*}
\begin{split}
f(u^{pn}x)g(S_{pn}y)\overline{f(u^{qn}x)g(S_{qn}y)} & ~=~
f(u^{pn}x)g(R_eS_{n}y)\overline{f(u^{qn}x)g(R_eS_{n}y)}
\\
&~=~f(u^{pn}x)\overline{f(u^{qn}x)},
\end{split}
\end{equation*}
where we have used $g(R_eS_{n}y)\overline{g(R_eS_{n}y)}=1$ because $|g|=1$. Therefore \eqref{eqn_ortho_fg_sue_horocycle_2} implies 
\begin{equation*}
\lim_{N\to\infty} \BEu{n\in [N]} f(T^{pn}x)g(S_{pn}y)\overline{f(T^{qn}x)g(S_{qn}y)}~=~0
\end{equation*} 
for all but finitely many $p,q\in P_e$ with $p\neq q$. In light of \cref{prop_katai}, we thus have
\begin{equation*}
\lim_{N\to\infty} \BEu{n\in [N]} f(T^{n}x)g(S_{n}y)~=~0.
\end{equation*}
Since $x$ is generic and $\int f\d\mu=0$ we have $\lim_{N\to\infty}\mathbb{E}_{n\in[N]} f(T^nx)=0$ and so \eqref{eqn_ortho_fg_sue_horocycle_1} follows.
\end{proof}

\subsection{Proofs of Corollaries \ref{cor_ortho_Omega_linear_phases}, \ref{cor_ortho_Omega_Besicovitch_ap_fctns}, \ref{cor_polynomoial_joint_ud}, and \ref{cor_ortho_fg_sue_nilsequences}}
\label{sec_applications_thm_C}

For the proofs of Corollaries \ref{cor_ortho_Omega_linear_phases} and \ref{cor_ortho_Omega_Besicovitch_ap_fctns} we need the following lemma:

\begin{Lemma}
\label{lem_Omega_derived_systems_are_aperiodic}
Let $(X,T)$ be a uniquely ergodic topological dynamical system. Then the multiplicative topological dynamical system $(X,T^\Omega)$ is aperiodic.
\end{Lemma}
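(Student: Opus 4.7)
By the equivalent formulation of aperiodicity in \eqref{eqn_multiplicative_aperiodicity}, the task reduces as follows. Fix $f\in\Cont(X)$, $x\in X$, and $\alpha\in\Q$. When $\alpha\in\Z$, the summand $e(n\alpha)b_N(n)$ vanishes identically. When $\alpha\in\Q\setminus\Z$, since $\BEu{n\in[N]}e(n\alpha)\to 0$ as $N\to\infty$ while $\BEu{m\in[N]}f(T^{\Omega(m)}x)$ is bounded, the correction term in $b_N$ contributes a vanishing quantity, so it suffices to show that for every $\alpha=a/d\in\Q\setminus\Z$ in lowest terms,
\[
\lim_{N\to\infty}\BEu{n\in[N]}e(n\alpha)\,f\!\big(T^{\Omega(n)}x\big)\,=\,0.
\]
Introduce the complex Radon measures $\nu_N$ on $X$ defined by $\int g\d\nu_N=\BEu{n\in[N]}e(n\alpha)\,g(T^{\Omega(n)}x)$ for $g\in\Cont(X)$. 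Each $\nu_N$ has total variation at most $1$, so by Banach-Alaoglu the family $\{\nu_N\}$ has weak-$*$ limit points; the plan is to show that every such limit point $\nu^*$ vanishes.

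This proceeds in two steps. The first is that $\nu^*$ is $T$-invariant, discussed below. The second is that every $T$-invariant complex Radon measure $\nu'$ on the uniquely ergodic system $(X,\mu,T)$ equals $\nu'(X)\cdot\mu$: indeed, since the Birkhoff averages $\frac{1}{N}\sum_{n=1}^{N}g(T^nz)$ converge uniformly in $z\in X$ to $\int g\d\mu$ for every $g\in\Cont(X)$ (a defining property of unique ergodicity), applying the $T$-invariant continuous linear functional $g\mapsto\int g\d\nu'$ to this uniformly convergent sequence yields $\int g\d\nu'=\int\!\big(\tfrac{1}{N}\sum_{n=1}^{N}g\circ T^n\big)\d\nu'\to\nu'(X)\int g\d\mu$. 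Hence $\nu^*=\nu^*(X)\cdot\mu$, and $\nu^*(X)=\lim_N\BEu{n\in[N]}e(n\alpha)=0$ forces $\nu^*=0$.

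The $T$-invariance of $\nu^*$ amounts to showing
\[
\lim_{N\to\infty}\left|\BEu{n\in[N]}e(n\alpha)\,f\!\big(T^{\Omega(n)+1}x\big)\,-\,\BEu{n\in[N]}e(n\alpha)\,f\!\big(T^{\Omega(n)}x\big)\right|\,=\,0,
\]
which I prove by retracing the proof of \cref{thm_dynamical_MVT_Omega} with one crucial modification: the sets $B_1\subset\P_1$ and $B_2\subset\P_2$ produced by \cref{lem_coprimaility_measures_1} must additionally lie in $P_0\coloneqq\{p\in\P:p\equiv 1\pmod d\}$, so that $B_1\subset P_0$ and $B_2\subset P_0\cdot P_0$. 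This constraint ensures $e(pn\alpha)=e(n\alpha)$ for every $p\in B_1$ and $e(qn\alpha)=e(n\alpha)$ for every $q\in B_2$, and so the weight $e(n\alpha)$ passes through unchanged when \cref{prop_coprimality_criterion} is applied to $a(n)=e(n\alpha)\,f(T^{\Omega(n)+1}x)$ with $B=B_1$, and to $a(n)=e(n\alpha)\,f(T^{\Omega(n)}x)$ with $B=B_2$. Both applications transform the original averages into $\BEul{m\in B_j}\BEu{n\in[N/m]}e(n\alpha)\,f(T^{\Omega(n)+2}x)$, and these two log-averages agree up to $O(\epsilon)$ by \cref{lem_hilfslemma_1}; letting $\epsilon\to 0$ concludes. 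The main obstacle is therefore the constrained version of \cref{lem_coprimaility_measures_1}, handled by repeating its proof with $P_0$ in place of $\P$; the only additional input is Dirichlet's theorem on primes in arithmetic progressions, which supplies $\sum_{p\in P_0}1/p=\infty$ and enough primes of $P_0$ in each interval $[\rho^j,\rho^{j+1})$ to carry out the construction.
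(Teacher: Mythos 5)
Your argument is correct, but it takes a genuinely different route from the paper. The paper does not redo the combinatorics of \cref{sec_distr_orbits_along_Omega} for the twisted averages; instead it builds an auxiliary multiplicative system on $Y=X\times(\Z/m\Z)^*$ (with $S_n=T^{\Omega(n)}\times R_n$, where $R_p$ is multiplication by $p$ mod $m$ for $p\nmid m$ and the identity for $p\mid m$), checks that this product is finitely generated and strongly uniquely ergodic, and applies \cref{thm_dynamical_MVT_fg_sue} as a black box to get equidistribution of $T^{\Omega(n)}x$ along each arithmetic progression, from which orthogonality to $e(n\alpha)$ for rational $\alpha$ is immediate. You instead rerun the proof of \cref{thm_dynamical_MVT_Omega} on the complex measures $\nu_N$ attached to the twisted averages, with the almost-prime sets of \cref{lem_coprimaility_measures_1} constrained to lie in the progression $1\bmod d$ so that the phase $e(n\alpha)$ is transported unchanged through \cref{prop_coprimality_criterion} and \cref{lem_hilfslemma_1}; the soft step identifying a $T$-invariant weak-$*$ limit of complex measures with $\nu^*(X)\mu$ under unique ergodicity is also fine. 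What your route buys is independence from \cref{thm_dynamical_MVT_fg_sue} (only Section~2 machinery is used); what it costs is having to reprove the constrained variant of \cref{lem_coprimaility_measures_1}, and the extra number-theoretic input there is really the Prime Number Theorem in arithmetic progressions, not just Dirichlet's theorem: divergence of $\sum_{p\equiv 1 (d)}1/p$ alone does not give the lower bound $|P_0\cap[\rho^j,\rho^{j+1/2})|\gg \rho^j/j$ needed to copy that construction (the paper's unconstrained version already leans on the PNT for the same purpose, so this is a fair trade, but you should cite the right tool). Two cosmetic slips: for $\alpha\in\Z$ the summand $e(n\alpha)b_N(n)=b_N(n)$ does not vanish identically, only its average over $[N]$ does (which is all you need); and by contrast the paper's route yields, as a by-product, the stronger statement about equidistribution of $T^{\Omega(mn+r)}x$ for every residue class, which your argument does not directly produce.
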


\begin{proof}
To verify that $(X,T^\Omega)$ is aperiodic we have to show that
\begin{equation}
\label{eqn_Omega_derived_systems_are_aperiodic}
\lim_{N\to\infty}\BEu{n\in [N]} e(n\alpha)\, f\big(T^{\Omega(n)}x\big)
~=~0
\end{equation}
for every $f\in\Cont(X)$, $x\in X$, and $\alpha\in\Q\setminus\Z$.
Pick $m\in\N$ such that $m \alpha\in\Z$.
Then
\begin{align*}
\lim_{N\to\infty}\BEu{n\in [N]} e(n\alpha)\, f\big(T^{\Omega(n)}x\big)
&=
\BEu{r\in [m]} e(r\alpha) \left(\lim_{N\to\infty}\BEu{n\in [N]} f\big(T^{\Omega(mn+r)}x\big)\right)
\\&=
\left(\BEu{r\in [m]} e(r\alpha)\right) \left(\int f\d\mu\right)
\\&=0,
\end{align*}
where the second to last equality follows from
\cref{cor_ortho_Omega_rational_phases}.
\end{proof}

\begin{proof}[Proof of \cref{cor_ortho_Omega_linear_phases}]
Let $(X,T)$ be a uniquely ergodic additive topological dynamical system and let $\mu$ denote the corresponding unique $T$-invariant Borel probability measure on $X$. Our goal is to show that
\begin{equation}
\label{eqn_ortho_Omega_linear_phases_1}
\lim_{N\to\infty}\BEu{n\in [N]} e(n\alpha)\, f\big(T^{\Omega(n)}x\big)
~=~
\begin{cases}
\int f\d\mu,&\text{if}~\alpha\in\Z
\\
0,&\text{if}~\alpha\in\R\setminus\Z
\end{cases}
\end{equation}
for every $f\in\Cont(X)$ and every $x\in X$.
Since rotation by $\alpha$ is a ($1$-step) nilsystem and $(X,T^\Omega)$ is finitely generated and aperiodic (due to \cref{lem_Omega_derived_systems_are_aperiodic}), it follows from \cref{thm_ortho_fg_sue_nilsequences} that
\begin{equation}
\label{eqn_ortho_Omega_linear_phases_2}
\lim_{N\to\infty}\left(\BEu{n\in [N]} e(n\alpha) f\big(T^{\Omega(n)}x\big)-
\left(\BEu{n\in [N]} e(n\alpha)\right)
\left(\BEu{n\in [N]} f\big(T^{\Omega(n)}x\big)\right)\right)=0.
\end{equation}
By \cref{thm_dynamical_MVT_Omega} we have 
\begin{equation}
\label{eqn_ortho_Omega_linear_phases_3}
\lim_{N\to\infty} \BEu{n\in [N]}f\big(T^{\Omega(n)}x\big)~=~\int f\d\mu,
\end{equation}whereas
\begin{equation}
\label{eqn_ortho_Omega_linear_phases_4}
\lim_{N\to\infty} \BEu{n\in [N]} e(n\alpha)~=~
\begin{cases}
1,&\text{if}~\alpha\in\Z
\\
0,&\text{if}~\alpha\in\R\setminus\Z
\end{cases}.
\end{equation}
Putting together \eqref{eqn_ortho_Omega_linear_phases_2}, \eqref{eqn_ortho_Omega_linear_phases_3}, and \eqref{eqn_ortho_Omega_linear_phases_4} proves \eqref{eqn_ortho_Omega_linear_phases_1}.
\end{proof}

\begin{proof}[Proof of \cref{cor_ortho_Omega_Besicovitch_ap_fctns}]
Let $a\colon\N\to\C$ be a Besicovitch almost periodic function.
In view of \cref{cor_ortho_Omega_linear_phases}, for any trigonometric polynomial $P(n)\coloneqq c_1 e(n\alpha_1)+\ldots+c_L e(n\alpha_L)$ we have
\begin{equation*}
\lim_{N\to\infty}\BEu{n\in [N]} P(n)\,f\big(T^{\Omega(n)}x\big)
~=~ M(P)\left(
\int f\d\mu\right)
\end{equation*}
for every $f\in\Cont(X)$ and every $x\in X$.
Since $a$ can be approximated by trigonometric polynomials, \eqref{eqn_ortho_Omega_Bes_ap} follows.
\end{proof}

Before proving \cref{cor_polynomoial_joint_ud}, we turn our attention to \cref{cor_ortho_fg_sue_nilsequences}.

\begin{proof}[Proof of \cref{cor_ortho_fg_sue_nilsequences}]
From \cref{thm_ortho_fg_sue_nilsequences} it follows that
\begin{equation*}
\lim_{N\to\infty}\left(\BEu{n\in [N]} \eta(n) g(S_ny) -
\left(\BEu{n\in [N]} \eta(n)\right)
\left(\BEu{n\in [N]} g(S_ny)\right)\right)=0
\end{equation*}
for all $y\in Y$, $g\in\Cont(Y)$, and all nilsequences $\eta\colon\N\to\C$. Since
$$
\lim_{N\to\infty}\BEu{n\in [N]} g(S_ny)~=~\int g\d\nu
$$
by \cref{thm_dynamical_MVT_fg_sue}, we conclude that
\begin{equation*}
\lim_{N\to\infty}\frac{1}{N}\sum_{n=1}^N \eta(n)\,g\big(S_n y\big)
~=~
M(\eta)\left(\int g\d\nu\right).
\end{equation*}
This finishes the proof.
\end{proof}

\begin{proof}[Proof of \cref{cor_polynomoial_joint_ud}]
Let $p(x)=c_k x^k + \ldots + c_1 x + c_0$ and $q(x)=d_\ell x^\ell+\ldots+d_1 x+d_0$ be two polynomials with real coefficients and suppose at least one of the coefficients $c_1,\ldots,c_k$ is irrational and at least one of the coefficients $d_1,\ldots,d_\ell$ is irrational.
It follows from \cref{cor_ortho_fg_sue_nilsequences} that for any uniquely ergodic $(X,\mu,T)$, $x\in X$, and $f\in\Cont(X)$ one has
\begin{equation}
\label{eqn_nil_AM_n_1}
\lim_{N\to\infty}\frac{1}{N}\sum_{n=1}^N e(h p(n))\,f\big(T^{\Omega(n)} x\big)
~=~0,\qquad\forall h\in\Z\setminus\{0\}.
\end{equation}
Let $T(x_1,\ldots,x_\ell)=(x_1+d_\ell, x_2+x_1, x_3+x_2,\ldots, x_\ell+x_{\ell-1})$, $f(x_1,\ldots,x_\ell)=e(m x_\ell)$ for $m\in\N$, $q_\ell(t)=q(t)$, $q_i(t)=q_{i+1}(t+1)-q_i(t)$ for $i=1,2,\ldots,\ell-1$, $x=(q_1(0),\ldots,q_\ell(0))$, and $X=\overline{\{T^n x: n\in\N\}}$. As explained in the proof of \cref{cor_polynomoial_ud_Omega} in \cref{sec_appl_thmA}, with this choice of $X$, $T$, $f$, and $x$, we have that $(X,T)$ is uniquely ergodic and  
\[
f\big(T^{n} x\big)=e(mq(n)).
\]
Hence \eqref{eqn_nil_AM_n_1} implies
\begin{equation*}
\lim_{N\to\infty}\frac{1}{N}\sum_{n=1}^N e(h p(n))\,e(m q(\Omega(n)))
~=~0,\qquad\forall h,m\in\Z\setminus\{0\}.
\end{equation*}
By Weyl's equidistribution criterion, it follows that $\big(p(n),q(\Omega(n))\big)_{n\in\N}$ is uniformly distributed in $\T^2$.
\end{proof}

\section{Entropy of finitely generated multiplicative topological dynamical systems equals zero}
\label{sec_entropy}

Let $(X,T)$ be an additive topological dynamical system. By a \define{finite open cover} of $X$ we mean a finite collection $\mathcal{C}$ of open non-empty sets such that $\bigcup_{C\in \mathcal{C}} C=X$. A \define{subcover} of a finite open cover $\mathcal{C}$ is any subset $\mathcal{D}\subset \mathcal{C}$ that is itself a finite open cover of $X$. Also, given a finite collection $\mathcal{C}_1,\ldots,\mathcal{C}_t$ of finite open covers of $X$, we denote by $\bigvee_{i=1}^t\mathcal{C}_i$ the finite open cover of $X$ given by
$$
\bigvee_{i=1}^t\mathcal{C}_i~\coloneqq~ \{C_1\cap \ldots\cap C_t: C_1\in \mathcal{C}_1,\ldots, C_t\in\mathcal{C}_t\}.
$$
Let $H(\mathcal{C})$ be defined as
$$
H(\mathcal{C})\,\coloneqq\,\min\left\{\frac{\log(|\mathcal{D}|)}{\log(2)}: \text{$\mathcal{D}$ is a subcover of $\mathcal{C}$}\right\}.
$$
One can show that the limit
$$
H(T,\mathcal{C})~\coloneqq~ \lim_{N\to\infty}~\frac{1}{N}\, H\left(\bigvee_{n=1}^N T^{-n}\mathcal{C}\right)
$$
always exists. The \define{(topological) entropy} of the system $(X,T)$ is then defined as
$$
h(T)~\coloneqq~\sup_{\mathcal{C}} H(T,\mathcal{C}),
$$ 
where the supremum is taken over all finite open covers $\mathcal{C}$ of $X$.

For multiplicative topological dynamical systems entropy is defined similarly. A sequence $\Phi=(\Phi_{N})_{N\in\N}$ of finite non-empty subsets of $\N$ is called a \define{\Folner{} sequence} for the semigroup $(\N,\cdot)$ if for every $m\in\N$ one has
$$
\lim_{N\to\infty} \frac{|\Phi_N\,\triangle\, \Phi_N/m|}{|\Phi_N|} ~=~0,
$$
where $\Phi_N/m\coloneqq \{n: mn\in\Phi_N\}$.
Given a multiplicative topological dynamical system $(Y,S)$, an open cover $\mathcal{C}$ of $Y$, and a \Folner{} sequence $\Phi=(\Phi_N)_{N\in\N}$ for $(\N,\cdot)$, consider the quantity
$$
H(S,\mathcal{C},\Phi)~\coloneqq~ \lim_{N\to\infty}~\frac{1}{|\Phi_N|}\, H\left(\bigvee_{n\in \Phi_N} S_n^{-1}\mathcal{C}\right).
$$
The \define{(topological) entropy} of the system $(Y,S)$ is
$$
h(S)~\coloneqq~\sup_{\mathcal{C}} H(S,\mathcal{C},\Phi),
$$ 
where the supremum is taken over all finite open covers $\mathcal{C}$ of $Y$. We remark that the quantity $h(S)$ does not depend on the choice of \Folner{} sequence $\Phi=(\Phi_N)_{N\in\N}$ (cf.\ \cite[Theorem 6.8]{DFR16}).

\begin{Proposition}
\label{prop_fg_implies_zero_entropy}
Any finitely generated multiplicative topological dynamical system has zero entropy.
\end{Proposition}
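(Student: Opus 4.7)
My proof rests on the simple observation that since $(\N,\cdot)$ is abelian, the generators $R_1,\ldots,R_d$ of $S$ pairwise commute. Writing $n = \prod_{p\in\P} p^{v_p(n)}$, we therefore have
\[
S_n \,=\, R_1^{a_1(n)} R_2^{a_2(n)} \cdots R_d^{a_d(n)},\qquad a_i(n)\,\coloneqq\,\sum_{p\in\P:\, S_p = R_i} v_p(n),
\]
and since $a_1(n)+\cdots+a_d(n) = \Omega(n)\leq \log_2 n$, each $a_i(n)$ takes at most $\lfloor\log_2 n\rfloor+1$ distinct values. Hence, for any finite $F\subset\N$ with $M\coloneqq\max F$,
\[
\big|\{S_n: n\in F\}\big|\,\leq\,(\lfloor\log_2 M\rfloor+1)^d,
\]
a quasi-polynomial count depending only on $\max F$ and on $d$.

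The next step is to convert this counting bound into an entropy bound. For any finite open cover $\mathcal{C}$ of $Y$, the join $\bigvee_{n\in F} S_n^{-1}\mathcal{C}$ depends only on the \emph{set} of transformations $\{S_n:n\in F\}$, and for any finite set $\mathcal{T}$ of continuous self-maps of $Y$ one has the trivial estimate $H\bigl(\bigvee_{T\in\mathcal{T}}T^{-1}\mathcal{C}\bigr)\leq |\mathcal{T}|\cdot\log_2|\mathcal{C}|$. Combining,
\[
H\!\left(\bigvee_{n\in F} S_n^{-1}\mathcal{C}\right)\,\leq\,(\log_2 M+1)^d\,\log_2|\mathcal{C}|.
\]

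It remains to exhibit a F\o{}lner sequence $\Phi=(\Phi_N)_{N\in\N}$ for $(\N,\cdot)$ along which the right-hand side above is $\oh(|\Phi_N|)$. A natural choice is the ``box'' sequence
\[
\Phi_N\,\coloneqq\,\{q_1^{a_1}q_2^{a_2}\cdots q_N^{a_N}: 0\leq a_i\leq N\},
\]
where $q_1<q_2<\cdots$ enumerates the primes. For any fixed $m\in\N$ and all $N$ large enough we have $\Phi_N/m\subset\Phi_N$ and $|\Phi_N/m|/|\Phi_N|\to 1$, so $\Phi$ is F\o{}lner. Moreover $|\Phi_N|=(N+1)^N$ while $\max\Phi_N\leq (q_1\cdots q_N)^N$, so by the Prime Number Theorem $\log_2\max\Phi_N = \Oh(N^2\log N)$. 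Therefore
\[
\frac{1}{|\Phi_N|}H\!\left(\bigvee_{n\in \Phi_N}S_n^{-1}\mathcal{C}\right)\,=\,\Oh\!\left(\frac{(N^2\log N)^d}{(N+1)^N}\right)\,\xrightarrow{N\to\infty}\,0,
\]
which yields $H(S,\mathcal{C},\Phi)=0$ for every finite open cover $\mathcal{C}$, and hence $h(S)=0$.

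\textbf{Main obstacle.} The genuinely combinatorial content is the quasi-polynomial bound $|\{S_n: n\leq M\}|\leq (\log_2 M+1)^d$, which follows at once from commutativity of $(\N,\cdot)$ together with the finite-generation hypothesis; the rest of the argument is the selection of a F\o{}lner sequence whose cardinality grows much faster than this quasi-polynomial count. The box construction above, whose super-exponential growth overwhelms any polynomial in $\log\max\Phi_N$, comfortably suffices.
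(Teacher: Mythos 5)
Your proof is correct and follows essentially the same route as the paper: choose a box-type F\o{}lner sequence of super-exponential cardinality, observe that finite generation (plus commutativity of the realized generators) forces the number of distinct transformations $S_n$, $n\in\Phi_N$, to grow only polynomially in $N$ (up to logarithms), and conclude via the trivial bound $H(\bigvee_i \mathcal{C}_i)\leq\sum_i\log_2|\mathcal{C}_i|$. Your counting via $\Omega(n)\leq\log_2 n$ is only a cosmetic variant of the paper's direct bound on the exponent vectors over its F\o{}lner boxes.
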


\begin{proof}
Let $p_n$ denote the $n$-th prime number. Let $(Y,S)$ be a finitely generated multiplicative topological dynamical system and $R_1,\ldots, R_d$ its generators.
Since $h(S)=\sup_{\mathcal{C}} H(S,\mathcal{C})$ is independent of the choice of $\Phi$, let us stick for convenience to the ``standard'' \Folner{} sequence
$$
\Phi_N=\{p_1^{e_1}\cdot\ldots\cdot p_N^{e_N}: 0\leq e_1,\ldots,e_N\leq N-1\},\qquad \forall N\in\N.
$$
Let $\mathcal{C}$ be an arbitrary finite open cover of $Y$. Note that for $n=p_1^{e_1}\cdot\ldots\cdot p_N^{e_N}\in \Phi_N$ we have $S_n=S_{p_1}^{e_1}\circ \cdots\circ S_{p_N}^{e_N}\in \{R_1^{g_1}\circ \cdots \circ R_d^{g_d}: 0\leq g_1,\ldots,g_d\leq (N-1)^2\}$. This implies that
$$
\bigvee_{n\in \Phi_N} S_n^{-1}\mathcal{C}~\subset~ \bigvee_{
0\leq g_1,\ldots,g_d\leq (N-1)^2
}R_1^{-g_1}\circ \cdots \circ R_d^{-g_d}\mathcal{C}.
$$
Since the size of the cover $\bigvee_{
0\leq g_1,\ldots,g_d\leq (N-1)^2
}R_1^{-g_1}\circ \cdots \circ R_d^{-g_d}\mathcal{C}$ is at most $|\mathcal{C}|^{{N^{2d}}}$, we can estimate
$$
H\left(\bigvee_{n\in \Phi_N} S_n^{-1}\mathcal{C}\right)~\leq~
\frac{\log\left(|\mathcal{C}|^{N^{2d}}\right)}{\log 2}~=~N^{2d}\, \frac{\log |\mathcal{C}|}{\log 2}.
$$
Since $|\Phi_N|=N^N$, we conclude that
$$
\lim_{N\to\infty}~\frac{1}{|\Phi_N|}\, H\left(\bigvee_{n\in \Phi_N} S_n^{-1}\mathcal{C}\right)~\leq~ \lim_{N\to\infty} \frac{N^{2d}\, \log|\mathcal{C}|}{N^N\, \log 2}~=~0.
$$
This shows that the entropy of $(Y,S)$ is zero.
\end{proof}


\bibliographystyle{aomalphanomr}
\bibliography{mynewlibrary}


\bigskip
\footnotesize
\noindent
Vitaly Bergelson\\
\textsc{The Ohio State University}\par\nopagebreak
\noindent
\href{mailto:bergelson.1@osu.edu}
{\texttt{bergelson.1@osu.edu}}

\bigskip
\footnotesize
\noindent
Florian K.\ Richter\\
\textsc{{\'E}cole Polytechnique F{\'e}d{\'e}rale de Lausanne} (EPFL)\par\nopagebreak
\noindent
\href{mailto:f.richter@epfl.ch}
{\texttt{f.richter@epfl.ch}}

\end{document}